\documentclass{amsart}

\usepackage{amsthm,amssymb}

\newtheorem{theorem}{Theorem}[section]
\newtheorem{lemma}[theorem]{Lemma}
\newtheorem{proposition}[theorem]{Proposition}
\newtheorem{definition}[theorem]{Definition}
\newtheorem{claim}[theorem]{Claim}
\newtheorem{corollary}[theorem]{Corollary}
\newtheorem{remark}[theorem]{Remark}
\newtheorem{fact}[theorem]{Fact}

\renewcommand{\P}{\mathbb{P}}
\newcommand{\R}{\mathbb{R}}

\newcommand{\A}{\mathbb{A}}
\newcommand{\C}{\mathbb{C}}
\newcommand{\Q}{\mathbb{Q}}
\newcommand{\M}{\mathbb{M}}
\newcommand{\X}{\mathbb{X}}
\newcommand{\Y}{\mathbb{Y}}

\newcommand{\Coll}{\mathop{\mathrm{Coll}}}
\newcommand{\dom}{\mathop{\mathrm{dom}}}

\newcommand{\cf}{\mathop\mathrm{cf}}
\newcommand{\ran}{\mathop{\mathrm{ran}}}
\newcommand{\otp}{\mathop{\mathrm{otp}}}
\newcommand{\Add}{\mathop{\mathrm{Add}}}
\newcommand{\restrict}{\upharpoonright}
\newcommand{\Ult}{\mathop{\mathrm{Ult}}}
\newcommand{\crit}{\mathop{\mathrm{crit}}}
\newcommand{\stem}{\mathop{\mathrm{stem}}}

\renewcommand{\top}{\mathop{\mathrm{top}}}

\title{Successive failures of approachability}
\author{Spencer Unger}
\date{February 14, 2017}

\begin{document}

\begin{abstract}  Motivated by showing that in ZFC we cannot construct a special
Aronszajn tree on some cardinal greater than $\aleph_1$, we produce a model in
which the approachability property fails (hence there are no special Aronszajn
trees) at all regular cardinals in the interval $[\aleph_2,
\aleph_{\omega^2+3}]$ and $\aleph_{\omega^2}$ is strong limit. \end{abstract}

\maketitle

\section{Introduction}

In the 1920's, K\"onig \cite{konig} proved that every tree of height $\omega$
with finite levels has a cofinal branch. In the 1930's Aronszajn \cite{kurepa}
showed that the analogous theorem for $\omega_1$ fails.  In particular he
constructed a tree of height $\omega_1$ whose levels are countable which has no
cofinal branch.  Such trees have come to be known as Aronszajn trees.  The first
Aronszajn tree is \emph{special} in the sense that there is a function $f:T \to
\omega$ such that $f(s) \neq f(t)$ whenever $s$ is below $t$ in $T$.  This
function $f$ witnesses that $T$ has no cofinal branch.

These two theorems provide a strong contrast between the combinatorial
properties of $\omega$ and $\omega_1$.  K\"onig's theorem shows that $\omega$
has a certain compactness property, while an Aronszajn tree is a canonical
example of a noncompact object of size $\omega_1$.  These properties admit
straightforward generalizations to higher cardinals.  We say that a regular
cardinal $\lambda$ has the \emph{tree property} if it satisfies a higher analog
of K\"onig's theorem.  If $\lambda$ does not have the tree property, then we
call a counter example a $\lambda$-Aronszajn tree.  The natural question is:
Which cardinals carry Aronszajn trees?

A full answer to this question is connected to the phenomena of independence in
set theory and large cardinals.  The first evidence of this
comes from a theorem of Specker \cite{specker} which shows that Aronszajn's
construction can be generalized in the context of an instance of the generalized
continuum hypothesis.  In particular if $\kappa^{<\kappa} = \kappa$, then there
is a special $\kappa^+$-Aronszajn tree (for the appropriate generalization of
the notion of special).  On the other hand, the tree property has a strong
connection with the existence of large cardinals.  We say that an uncountable
cardinal is \emph{weakly compact} if it satisfies a higher analog of infinite
Ramsey's theorem.  By theorems of Tarski and Erd\"os \cite{tarskierdos} and Monk
and Scott \cite{monkscott}, an uncountable cardinal $\lambda$ is weakly compact
if and only if it is inaccessible and has the tree property.

The invention of forcing provided method for proving the consistency of the tree
property at accessible cardinals.  An early theorem of Mitchell and Silver
\cite{mitchell}, shows that the tree property at $\omega_2$ is equiconsistent
with the existence of a weakly compact cardinal.  So if the existence of a
weakly compact cardinal is consistent, then it is impossible to construct an
$\omega_2$-Aronszajn tree from the usual axioms of set theory.  Moreover, the
assumption that a weakly compact cardinal is consistent is necessary.  This
result gives an approach to resolving which cardinals have Aronszajn trees.  The
conjecture is that if the existence of enough large cardinals is consistent,
then we cannot prove the statement that for some $\lambda$ there is
$\lambda$-Aronszajn tree.  A weaker goal which captures many of the difficulties
of this conjecture is to show that in ZFC one cannot prove that there is a
cardinal $\lambda$ which carries a \emph{special} $\lambda$-Aronszajn tree.

It is this weaker goal that we address in this paper.  We prove

\begin{theorem} Under suitable large cardinal hypotheses it is consistent that
$\aleph_{\omega^2}$ is strong limit and the approachability property fails for
every regular cardinal in the interval $[\aleph_2, \aleph_{\omega^2+3}]$.
\end{theorem}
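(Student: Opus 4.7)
The plan is to build the model by a large-scale forcing construction over a ground model with a long sequence of supercompact cardinals. Fix an increasing sequence $\langle \kappa_\alpha : \alpha \leq \omega^2 + 2\rangle$ of supercompacts in the ground model $V$. First apply a Laver-style preparation to make each $\kappa_\alpha$ indestructible under suitably directed-closed forcing. Then define a long iteration/product $\mathbb{P}$ whose step at each $\kappa_\alpha$ with $\alpha < \omega^2$ is a Mitchell-style forcing that collapses the interval $[\kappa_\alpha^+, \kappa_{\alpha+1})$ to have size $\kappa_\alpha$ and adds many Cohen subsets of $\kappa_\alpha$, arranging that $\kappa_\alpha$ becomes the appropriate $\aleph$-cardinal in the interval $[\aleph_2,\aleph_{\omega^2})$ in the extension and that approachability fails there. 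At the index $\omega^2$, apply a Prikry-type forcing with interleaved collapses (in the spirit of Gitik and Sinapova) that singularizes $\kappa_{\omega^2}$ to cofinality $\omega$ and makes it $\aleph_{\omega^2}$ while preserving enough of the surrounding structure; at $\kappa_{\omega^2+1}$ and $\kappa_{\omega^2+2}$, further Mitchell-style forcings are included so that they become $\aleph_{\omega^2+2}$ and $\aleph_{\omega^2+3}$ with failure of approachability.

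The verification of failure of approachability at each target $\lambda \in [\aleph_2,\aleph_{\omega^2+3}]$ is local. One factors $\mathbb{P} \cong \mathbb{P}_{\mathrm{low}} * \dot{\mathbb{P}}_{\mathrm{high}}$ at an appropriate threshold near $\lambda$, checks that $\mathbb{P}_{\mathrm{low}}$ is small enough and $\dot{\mathbb{P}}_{\mathrm{high}}$ sufficiently closed in the intermediate model, and then uses the preserved supercompactness of a relevant $\kappa_\alpha$ to lift a $V$-embedding $j:V \to M$ to the extension. The lifted embedding witnesses that, given any candidate approachability sequence $\vec{a}$ for $\lambda$, a stationary set of points of cofinality below the critical point is \emph{not} approachable with respect to $\vec{a}$, yielding $\lnot \mathrm{AP}_\lambda$. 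That $\aleph_{\omega^2}$ is strong limit will follow from a GCH-style bookkeeping at the supercompacts and the cardinal arithmetic preserved through the Prikry step.

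The main obstacle is the case $\lambda = \aleph_{\omega^2+1}$, the successor of the singular strong limit. Here one cannot directly invoke the Mitchell/supercompact lifting argument, because the Prikry singularization fundamentally alters the structure of $\kappa_{\omega^2}^+$ and there is no longer a straightforward directed-closed tail beyond the threshold. The singularizing forcing must be designed so that, working in a supercompact ultrapower $M$, the quotient forcing responsible for passing from the $M$-version of $\mathbb{P}$ to the $V$-version admits a master condition and is captured by enough closure in $M$; only then does the $j:V\to M$ lift to the full generic extension, producing the stationary failure-of-approachability witness at $\aleph_{\omega^2+1}$. Coordinating this lift with the cardinal-preservation requirements and with the simultaneous failures at $\aleph_{\omega^2+2},\aleph_{\omega^2+3}$ above is the principal technical burden, and the exact choice of Prikry-style forcing and of its interleaved collapses will be dictated by what this lifting argument demands.
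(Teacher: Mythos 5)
Your plan diverges from the paper's in two ways that matter, and each introduces a gap worth flagging.

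\textbf{Large-cardinal budget and where the cardinals below $\aleph_{\omega^2}$ come from.} You posit a sequence $\langle \kappa_\alpha : \alpha \leq \omega^2 + 2\rangle$ of supercompacts and pre-collapse between consecutive ones before doing a Prikry step at $\kappa_{\omega^2}$. The paper uses a single supercompact $\kappa$ together with a short tail $\kappa_1 < \dots < \kappa_\omega < \kappa_\omega^+ < \kappa_{\omega+2} < \kappa_{\omega+3}$ of Mahlo cardinals. In the paper, the cardinals $\aleph_\alpha$ for $\alpha < \omega^2$ are \emph{produced by the Prikry forcing itself}: the diagonal supercompact Prikry sequence $\langle x_n \rangle$ through $\mathcal{P}_\kappa(\kappa_n)$ is interleaved with Mitchell-style collapses $\Q(x_{n-1},x_n)$ so that each $\kappa_{x_n}$ becomes a new $\aleph$-block, and $\kappa$ becomes $\aleph_{\omega^2}$. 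Your plan and the paper's are genuinely incompatible: one cannot both pre-arrange the cardinal structure below $\kappa_{\omega^2}$ with Mitchell forcings and \emph{also} run a Gitik--Sharon-style Prikry forcing with interleaved collapses, because the latter does its own collapsing below $\kappa$. You mention both plans without resolving which one you mean.

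\textbf{The ``local factoring'' step fails for the Prikry part.} You say that for each target $\lambda$ one factors $\mathbb{P} \cong \mathbb{P}_{\mathrm{low}} * \dot{\mathbb{P}}_{\mathrm{high}}$ with $\dot{\mathbb{P}}_{\mathrm{high}}$ sufficiently closed. This cannot be right once the Prikry forcing is in the picture: a forcing that adds an $\omega$-sequence cofinal in $\kappa$ is not, above any fixed threshold $\lambda < \kappa$, factored off into a closed remainder. The paper needs the Prikry lemma (direct-extension closure), the projection to the nicer poset $\bar{\R}$ after passing to a $\Q^\omega$-generic, and chain-condition arguments ($\bar\R$ is $\kappa_\omega$-centered, $\R/I$ is $\kappa_{\omega+1}$-Knaster), none of which is a closure argument for the full Prikry quotient. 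Your verification template would let the approachability sequence ``escape'' into the Prikry part with no way to pin it down.

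\textbf{Cases you are missing.} The regular cardinals in $[\aleph_2,\aleph_{\omega^2+3}]$ include the successors of singulars \emph{strictly below} $\aleph_{\omega^2}$, namely $\aleph_{\omega n+1}$ for $1 \leq n < \omega$. These the paper handles by a bad-scale argument: in $V[G*H]$ there are scales with stationarily many bad points of the right cofinality, and those bad points are preserved through the Prikry extension because the collapses below a fixed $\lambda_n$ are small. A lifting/factoring scheme does not cover this case. You also do not address how the failure of approachability at an $\aleph_k$ set up by a Mitchell forcing \emph{below} the Prikry step survives the Prikry forcing \emph{above} it; the paper's scheme (Section~\ref{schematic}) together with the Gitik--Krueger preservation theorem (Theorem~\ref{GK}) is precisely what handles this, and it relies on writing the relevant forcing in a product form where the approximation property of the Mitchell collapse can be cited, not on closure.

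\textbf{The top case.} For $\aleph_{\omega^2+1}$ your idea of ``lifting $j:V\to M$ to the full generic extension'' is closer in spirit to the paper than the rest, but the paper does not lift through the Prikry forcing itself. It works in $V[G*H][\vec K]$ where $\vec K$ is a $\Q^\omega$-generic, takes $k$ the $\mathcal{P}_\kappa(\kappa_{\omega+1})$-ultrapower of \emph{that} model, chooses a condition in $k(\R)$ forcing $\kappa_{\omega+1}$ to become $\omega_1$, and then exploits the $\kappa_{\omega+1}$-chain condition of $\R/I$ to contradict the approachability of $\sup k``\kappa_{\omega+1}$. The master-condition-through-the-Prikry-forcing picture you sketch would run into the familiar obstruction that the Prikry forcing is not $\kappa^+$-closed, so the usual lifting machinery does not apply verbatim there.

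In short: the high-level intuition (Mitchell collapses for the double successors, a Gitik--Sharon-type Prikry step, special care at $\aleph_{\omega^2+1}$) matches the paper, but the architecture you describe is different and heavier, the factoring argument does not work for the Prikry part, and the successor-of-singular cases below $\aleph_{\omega^2}$ are unaddressed.
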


As we will mention below, the failure of the approachability property at a
cardinal $\lambda$ is a strengthening of the nonexistence of special
$\lambda$-Aronszajn trees.  So the theorem represents partial progress towards
the construction of a model with no special Aronszajn trees on any regular
cardinal greater than $\aleph_1$ and hence the weaker goal above.

Our theorem combines two approaches to the problem, which have until now seemed
incompatible.  The first is a ground up approach where one constructs models
where a longer and longer initial segments of the regular cardinals carry no
special trees (or even have the tree property).  The major advances in this
approach are due to Abraham \cite{abraham}, Cummings and Foreman
\cite{cummingsforeman}, Magidor and Shelah \cite{ms}, Sinapova
\cite{sinapova1}, Neeman \cite{neeman} and the author \cite{ungernsl} for the
tree property, and Mitchell \cite{mitchell} and the author \cite{ungerind2} for
the nonexistence of special trees.  This approach cannot continue through the
first strong limit cardinal without some changes suggested by the second
approach.

The second is an approach for dealing with the successors of singular
strong limit cardinals.  By Specker's theorem if $\nu$ is singular strong
limit and there are no special $\nu^{++}$-Aronszajn trees, then $2^\nu > \nu^+$.
So the \emph{singular cardinals hypothesis} fails at $\nu$.  The singular
cardinals hypothesis is an important property in the study of the continuum
function on singular cardinals and obtaining a model where it fails requires the
existence of large cardinals.  Any model for the non existence of special trees
above $\aleph_1$ must be a model where GCH fails everywhere.  Such a model was
first constructed by Foreman and Woodin \cite{FW} using a complex Radin forcing
construction.

For some time, a major problem for the second approach was whether it is
consistent to have the failure of SCH at $\nu$ and the nonexistence of special
Aronszajn trees at $\nu^+$.  This was resolved by Gitik and Sharon
\cite{gitiksharon} and their result was later improved by Neeman
\cite{neemansch} to give the tree property.  Note that by our remarks above such
models are required to get the nonexistence of special trees (or the tree
property) at $\nu^+$ and $\nu^{++}$ where $\nu$ is singular strong limit.
Further advances in this area are due to Cummings and Foreman
\cite{cummingsforeman}, the author \cite{ungersucc}, Sinapova
\cite{sinapova1,sinapova2,sinapova3} and the author and Sinapova \cite{su4}.

The main forcing in this paper combines the two approaches outlined above.  In
particular it combines the ground up approach in \cite{ungerind2} with a version
of Gitik and Sharon's \cite{gitiksharon} Prikry type forcing.  In the jargon,
collapses which enforce the nonexistence of special trees are \emph{interleaved}
with the Prikry points.  The main difficulty of the paper comes from controlling
new collapses as Prikry points are added.  This is typically done by leaving
some gaps between the Prikry points and the associated collapses.  In the
current work, we have no such luxury since we wish to control the combinatorics
of every regular cardinal below the cardinal which becomes singular.

The paper is organized as follows.  In Section \ref{preliminaries}, we make some
preliminary definitions most of which are standard in the study of either
singular cardinal combinatorics or compactness properties at double successors.
In Section \ref{preparation}, we describe the preparation forcing and derive the
measures that we need for the main forcing.  In Section \ref{mainforcing}, we
describe the main forcing and prove that it gives the desired cardinal
structure.  In Section \ref{schematic}, we give a schematic view of an argument
for the failure of approachability at a double successor cardinal.  In
Section \ref{finalmodel}, we prove that the extension by the main forcing gives
the desired failure of approachability.  For the double successor cardinals we
apply the scheme from the previous section and for the successor of each
singular cardinal we apply arguments from singular cardinal combinatorics.

\section{Preliminaries} \label{preliminaries}

In this section we define the combinatorial notions and forcing posets which are
at the heart of the paper.  By a theorem of Jensen \cite{jensen}, the existence
of a special $\sigma^+$-Aronszajn tree is equivalent to the existence of a
\emph{weak square sequence} at $\sigma$.

\begin{definition}  A weak square sequence at $\sigma$ is a sequence $\langle
\mathcal{C}_\alpha \mid \alpha < \sigma^+ \rangle$ such that
\begin{enumerate}
\item For all $\alpha < \sigma^+$, $1 \leq \vert \mathcal{C}_\alpha \vert \leq
\sigma$,
\item For all $\alpha < \sigma^+$ and all $C \in \mathcal{C}_\alpha$, $C$ is a
club subset of $\alpha$ of ordertype at most $\sigma$ and
\item For all $\alpha < \sigma^+$, all $C \in \mathcal{C}_\alpha$ and all $\beta
\in \lim(C)$, $C \cap \beta \in \mathcal{C}_\beta$.
\end{enumerate}
\end{definition}

If there is such a sequence, then we say that \emph{weak square holds} at
$\sigma$.

In this paper we are interested in the weaker \emph{approachability property}
isolated by Shelah \cite{shelahcard, shelahsing}.  For a cardinal $\tau$ and a
sequence $\langle a_\alpha \mid \alpha <\tau \rangle$ of bounded subsets of
$\tau$, we say that an ordinal $\gamma < \tau$ is \emph{approachable} with
respect to $\vec{a}$ if there is an unbounded $A \subseteq \gamma$ such that
$\otp(A) = \cf(\gamma)$ and for all $\beta < \gamma$ there is $\alpha < \gamma$
such that $A \cap \beta = a_\alpha$.  Using this we can define \emph{the
approachability ideal} $I[\tau]$.

\begin{definition} $S \in I[\tau]$ if and only if there are a sequence $\langle
a_\alpha \mid \alpha < \tau \rangle$ and a club $C \subseteq \tau$ such that
every $\gamma \in S \cap C$ is approachable with respect to $\vec{a}$.
\end{definition}

It is easy to see that $I[\tau]$ contains the nonstationary ideal.  We say that
\emph{the approachability property} holds at $\tau$ if $\tau \in I[\tau]$.  We
note that weak square at $\sigma$ implies the approachability property at
$\sigma^+$ and refer the interested reader to \cite{csurvey} for a proof.

In the case where $\sigma$ is a singular cardinal, the approachability property
at $\sigma^+$ is connected with the notion of good points in Shelah's PCF
theory.  We give a brief description of this in the case when $\cf(\sigma) =
\omega$, since we will make use of it in the final argument below.

Suppose that $\langle \sigma_n \mid n < \omega \rangle$ is an increasing
sequence of regular cardinals cofinal in $\sigma$.  A sequence of
functions $\langle f_\alpha \mid \alpha < \sigma^+ \rangle$ is a \emph{scale} of
length $\sigma^+$ in $\prod_{n<\omega} \sigma_n$ if it is increasing and cofinal
in $\prod_{n<\omega}\sigma_n$ under the eventual domination ordering.  A
remarkable theorem of Shelah is that there are sequences $\langle \sigma_n \mid
n <\omega \rangle$ for which scales of length $\sigma^+$ exist.

If $\vec{f}$ is a scale of length $\sigma^+$ in $\prod_{n<\omega} \sigma_n$,
then we say that $\gamma$ is \emph{good} for $\vec{f}$ if there are an unbounded
$A \subseteq \gamma$ and an $N<\omega$ such that for all $n \geq N$, $\langle
f_\alpha(n) \mid \alpha \in A \rangle$ is strictly increasing.  A scale
$\vec{f}$ is \emph{good} if there is a club $C \subseteq \sigma^+$ such that
every $\gamma \in C$ is good for $\vec{f}$.  A scale is \emph{bad} if it is not
good.  We note that approachability at $\sigma^+$ implies that all scales of
length $\sigma^+$ are good and refer the reader to \cite{csurvey} for a proof.

Each of the principles described above can be thought of as an instance of
incompactness.  In this paper we will be interested in the negation of these
properties.  We summarize the implications discussed above, but in terms of the
negations.

\begin{enumerate}
\item For all cardinals $\sigma$, the failure of approachability at $\sigma^+$
implies the failure of weak square at $\sigma$.
\item For all singular cardinals $\sigma$, there is a bad scale of length
$\sigma^+$ implies the failure of approachability at $\sigma^+$.
\end{enumerate}

In the arguments below we will either argue for the existence of a bad scale or
directly for the failure of approachability.  In particular we will have the
failure of weak square on a long initial segment of the cardinals or
equivalently the nonexistence of special Aronszajn trees on that interval.

We now define some the forcing posets which will form the collapses between the
Prikry points in our main forcing.  The poset is essentially due to Mitchell
\cite{mitchell} from his original argument for nonexistence of special
$\aleph_2$-Aronszajn trees.  We use a more flexible version of this poset as
described by Neeman \cite{neeman}.

\begin{definition} Let $\rho < \sigma < \tau \leq \eta$ be regular cardinals and
let $\mathbb{P}=\Add(\rho, \eta)$.  Let $\mathbb{C}(\mathbb{P},\sigma,\tau)$ be
the collection of partial functions $f$ of size less than $\sigma$ whose domain
is a set of successor ordinals contained in the interval $(\sigma,\tau)$ such
that for all $\alpha \in \dom(f)$, $f(\alpha)$ is a $\mathbb{P}\restrict
\alpha$-name for an element of $\Add(\sigma,1)_{V[\P\restrict \alpha]}$.  We
order the poset by $f_1 \leq f_2$ if and only if $\dom(f_1) \supseteq \dom(f_2)$
and for all $\alpha \in \dom(f_2)$, $\Vdash_{\P \restrict \alpha} f_1(\alpha)
\leq f_2(\alpha)$.  \end{definition}

Note that such posets are easily `enriched' in the sense of Neeman to give
Mitchell like collapses.  The \emph{enrichment} of $\mathbb{C}(\P,\sigma,\tau)$
by $\mathbb{P}$ is the poset defined in the generic extension by $\P$ with the
same underlying set as $\mathbb{C}(\mathbb{P},\sigma,\tau)$, but with the order
$f_1 \leq f_2$ if and only if $\dom(f_1) \supseteq \dom(f_2)$ and there is $p$
in the generic for $\P$ such that for all $\alpha \in \dom(f_2)$, $p\restrict
\alpha \Vdash f_1(\alpha) \leq f_2(\alpha)$.  If $P$ is $\P$-generic we will
write $\mathbb{C}^{+P}$ for the enrichment of $\mathbb{C}$ by $P$.  We will drop
the $P$ and just write $\C^+$ when it is clear from context which $P$ is
required.  Note that the poset $\mathbb{C}(\mathbb{P},\sigma, \tau)$ is
$\sigma$-closed, $\tau$-cc if $\tau$ is inaccessible and collapses every regular
cardinal in the interval $(\sigma,\tau)$ to have size $\sigma$.  Hence it makes
$\tau$ into $\sigma^+$.

For notational convenience we make the following definition.

\begin{definition} Let $\rho < \sigma < \tau$ be cardinals.  If $\mathbb{P} =
\Add(\rho,\tau)$ and $\mathbb{C} = \mathbb{C}(\mathbb{P}, \sigma, \tau)$, then
we write $\M(\rho,\sigma, \tau)$ for $\P * \C^+$.
\end{definition}

We note that $\M(\rho,\sigma,\tau)$ essentially gives the family of
Mitchell-like posets defined in Abraham \cite{abraham}.

We also need two facts about term forcing.  For more complete presentation of
term forcing we refer the interested reader to \cite{cummingshandbook}.  For
completeness we sketch the relevant definitions.  Suppose that $\P$ is a poset
and $\dot{\Q}$ is a $\P$-name for a poset.  Let $\mathcal{A}(\P,\dot{\Q})$ be
the set of $\P$-names $\dot{q}$ which are forced to be elements of $\dot{\Q}$
with the order $\dot{q}_1 \leq \dot{q}_2$ if $\Vdash_{\P}$ ``$\dot{q}_1 \leq
\dot{q}_2$ in $\dot{\Q}$".

We note that $\P \times \mathcal{A}(\P,\dot{\Q})$ induces a generic for
$\P*\dot{\Q}$ by taking the upwards closure in the order on $\P*\dot{\Q}$.  This
means that the extension by any two step iteration has an outer model which is
the extension by a product.  The $\C$ posets above can be seen as a kind of term
forcing and $\P*\C^+$ can be seen as a kind of two step iteration.  We will use
this idea extensively in the proof.

In certain cases, the term poset can be realized as a nice poset in $V$.

\begin{fact} \label{cohenterms} If $\kappa^{<\kappa}=\kappa$, $\lambda \geq
\kappa$ and $\P$ is a $\kappa$-cc poset, then
$\mathcal{A}(\P,\Add^{V^\P}(\kappa,\lambda))$ is isomorphic to
$\Add(\kappa,\lambda)$. \end{fact}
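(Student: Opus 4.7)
The plan is to show that the natural map $e \colon \Add(\kappa,\lambda) \to \mathcal{A}(\P, \Add^{V^\P}(\kappa,\lambda))$ defined by $p \mapsto \check p$ is a dense embedding, which gives isomorphism of the Boolean completions. Order and incompatibility preservation are immediate: $p_1 \leq p_2$ forces $\check p_1 \leq \check p_2$ in $\Add^{V^\P}(\kappa,\lambda)$, and if $p_1 \perp p_2$ they disagree at some coordinate, which the check names preserve.

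The substantive step is density. Given $\dot q \in \mathcal{A}(\P,\Add^{V^\P}(\kappa,\lambda))$, I want to produce $p \in \Add(\kappa,\lambda)$ lying below $\dot q$ in the separative quotient of $\mathcal{A}$. Using that $\P$ is $\kappa$-cc, I first obtain a ground-model set $D \subseteq \lambda \times \kappa$ of size less than $\kappa$ with $\Vdash_\P \dom(\dot q) \subseteq D$: by partitioning a maximal antichain of $\P$, on each piece $\dom(\dot q)$ is contained in a set of size less than $\kappa$ from $V$ (by $\kappa$-cc), and the total has size less than $\kappa$ by the regularity of $\kappa$ implied by $\kappa^{<\kappa}=\kappa$. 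Since $\mathcal{A}$ is $\kappa$-closed (inheriting from the $\kappa$-closure of $\Add^{V^\P}(\kappa,\lambda)$) and $|D|<\kappa$, I can descend below $\dot q$ in $\mathcal{A}$ to a stronger $\dot q'$ whose domain is forced to equal $D$ and whose value at each coordinate is specified by a $\P$-name for an element of $\{0,1\}$, i.e., by an antichain-labelling of size less than $\kappa$.

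At this point $\kappa^{<\kappa}=\kappa$ enters: there are at most $\kappa$ possible such labellings, so the total information determining $\dot q'$ up to term-equivalence is coded by a function $D \to \kappa$, which, via a fixed bijection between $\kappa^{<\kappa}$ and $\kappa$, corresponds to a condition in $\Add(\kappa,\lambda)$. By extending $p$ to incorporate the decisions made along the way, one gets $\check p \leq \dot q$ in the separative quotient.

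The main obstacle is that this combinatorial coding must be compatible with the two orderings, so that refinements of $\dot q$ in $\mathcal{A}$ correspond to extensions of the coded $p$ in $\Add(\kappa,\lambda)$; this requires a careful choice of the bijection and coordinatewise bookkeeping. The cleanest way to finish is to observe that both posets, after separative quotient, are atomless, $\kappa$-closed, and $\kappa^+$-cc of common size $\lambda^{<\kappa}=\lambda$, and that each generic adds exactly $\lambda$ mutually generic Cohen subsets of $\kappa$ over $V$; a standard back-and-forth between coordinate-antichain data on the two sides then yields the isomorphism of Boolean completions.
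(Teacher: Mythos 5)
Your opening move --- realizing the isomorphism via the check map $p \mapsto \check p$ --- cannot work, because that map is not dense in $\mathcal{A}(\P,\Add^{V^\P}(\kappa,\lambda))$, even in the separative quotient. Fix any two-element maximal antichain $\{a_0,a_1\}$ of $\P$ and let $\dot q$ be the term with $a_0 \Vdash \dot q = \{((0,0),0)\}$ and $a_1 \Vdash \dot q = \{((0,0),1)\}$. If $p \in \Add(\kappa,\lambda)$ decides coordinate $(0,0)$ then $\check p$ is outright incompatible with $\dot q$ below one of $a_0, a_1$; if $p$ does not decide it, then $\check p \cup \{((0,0),0)\}$ is an extension of $\check p$ that is incompatible with $\dot q$. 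Either way $\check p$ is not below $\dot q$ in the separative quotient. The whole point of a term poset is that its conditions branch along the $\P$-generic, so the check names are nowhere near dense. Your middle paragraph half-recognizes this and instead encodes a refinement $\dot q'$ as a ground-model partial function into $\kappa$; but the condition you obtain that way is an element of $\Add(\kappa,\lambda)$ only after a renaming of value sets, it is not a $p$ with $\check p \leq \dot q'$, so it does not repair the density claim.

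The fallback in your last sentence is also not a proof. Being atomless, $\kappa$-closed, $\kappa^+$-cc, and of size $\lambda$ does not determine $\Add(\kappa,\lambda)$ even up to forcing equivalence: the amalgam of $\Add(\kappa,1)$ and $\Add(\kappa,\lambda)$ under a common top element has all four properties under your hypotheses, yet is not equivalent to $\Add(\kappa,\lambda)$ when $\lambda > \kappa$, since one of its two generic branches yields an extension with $2^\kappa = \kappa^+$. You have also not verified the $\kappa^+$-cc of the term poset, and the size estimate $\lambda^{<\kappa}=\lambda$ for it is wrong in general: the number of nice $\P$-names is governed by $|\P|$, which is unconstrained in the statement (it is tame in the paper's applications, but your argument never addresses this). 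A correct argument passes to the dense set of terms whose domain is a fixed ground-model element of $[\lambda\times\kappa]^{<\kappa}$ (your $\kappa$-cc plus regularity step, which is the one correct piece), replaces the entries by nice $\P$-names, and then invokes a genuine uniqueness theorem for $\kappa$-Cohen forcing that requires more homogeneity than you state --- roughly, that every condition has $\kappa$ pairwise incompatible immediate refinements. None of that is supplied in the proposal.
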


If $\A$ is an iteration, then we can define a term ordering on $\A$ by $a \leq
a'$ if the support of $a$ contains the support of $a'$ and for every $\alpha$ in
the support of $a'$, it is forced by $\A \restrict \alpha$ that $a(\alpha) \leq
a'(\alpha)$.  We call this poset $\mathcal{A}(\A)$.  It is straightforward to
see that $\mathcal{A}(\A)$ induces a generic for $\A$.

It is straightforward to see that the poset $\mathcal{A}(\A)$ has many of the
same properties at the iteration.  We will need the following fact.

\begin{fact} \label{term-iteration} Let $\kappa$ be a $2$-Mahlo cardinal.
Suppose that $\A$ is an Easton support iteration of length $\kappa$ where the
set of nontrivial coordinates in the iteration is a stationary set $S$.  If
for every $\alpha \in S$, $\alpha$ is Mahlo and it is forced by $\A \restrict
\alpha$ that $\A(\alpha)$ is $\alpha$-closed, then $\mathcal{A}(\A)$ is
$\kappa$-cc and preserves the Mahloness of $\kappa$. \end{fact}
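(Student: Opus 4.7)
The plan is to prove both conclusions together by induction along the Mahlo cardinals in $S \cup \{\kappa\}$.

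For the $\kappa$-chain condition I would apply a $\Delta$-system argument. Suppose $\{a_\xi : \xi < \kappa\}$ are conditions in $\mathcal{A}(\A)$. Each support $s_\xi = \dom(a_\xi) \subseteq S$ has size less than $\kappa$ and is bounded in $\kappa$ by regularity. The $\Delta$-system lemma yields $T \subseteq \kappa$ of size $\kappa$ whose supports form a $\Delta$-system with root $r \subseteq \beta^*$ for some $\beta^* < \kappa$, and using the $2$-Mahloness of $\kappa$ I can arrange $\beta^*$ to be Mahlo. By the inductive hypothesis $\mathcal{A}(\A \restrict \beta^*)$ is $\beta^*$-cc, so among the $\kappa > \beta^*$ restrictions $\{a_\xi \restrict \beta^* : \xi \in T\}$ two must be compatible in $\mathcal{A}(\A \restrict \beta^*)$; let $b$ be a common extension of $a_\xi \restrict \beta^*$ and $a_\eta \restrict \beta^*$. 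The $\Delta$-system property forces $s_\xi \setminus \beta^*$ and $s_\eta \setminus \beta^*$ to be disjoint, so I glue $b$ on $[0,\beta^*)$ with $a_\xi$ on $s_\xi \setminus \beta^*$ and $a_\eta$ on $s_\eta \setminus \beta^*$ to obtain a condition in $\mathcal{A}(\A)$ extending both $a_\xi$ and $a_\eta$, contradicting the antichain.

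For preservation of Mahloness I exploit the factorization $\mathcal{A}(\A) \cong \mathcal{A}(\A \restrict \alpha) \times \mathcal{A}(\A \restrict [\alpha, \kappa))$, a genuine product because the term order is coordinate-wise. The tail factor is $\alpha$-closed in $V$: a descending sequence of length less than $\alpha$ has support-union still of size less than $\alpha$ and Easton, and at each coordinate $\beta \geq \alpha$ the forced $\beta$-closure of $\dot{\A}(\beta)$ supplies a name for a lower bound. By the inductive hypothesis, for $\alpha \in S$ the bottom factor $\mathcal{A}(\A \restrict \alpha)$ is $\alpha$-cc and preserves Mahloness of $\alpha$; Easton's Lemma applied to the product of an $\alpha$-cc and an $\alpha$-closed poset then shows $\alpha$ remains regular in $V[G]$. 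Since $S$ is stationary in $V$ and stays stationary in $V[G]$ by the $\kappa$-cc established above, the set of regular cardinals below $\kappa$ is stationary in $V[G]$; combined with a routine nice-name count keeping $\kappa$ inaccessible, this makes $\kappa$ Mahlo in $V[G]$.

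The main obstacle is closing the induction. The hypotheses that $\gamma$ is $2$-Mahlo and that $S \cap \gamma$ is stationary in $\gamma$ do not automatically descend to the intermediate Mahlo pivots $\beta^* \in S$ used in the $\Delta$-system step. The induction must therefore be carried either along the sub-class of Mahlos where those assumptions do propagate, or supplemented at other levels by a direct cardinality bound $|\mathcal{A}(\A \restrict \beta^*)| \leq \beta^*$ derived from the size structure of the preparation forcing $\A$. Either way, the $2$-Mahloness of $\kappa$ supplies stationarily many pivots at the top level, which is what the final step of the induction at $\kappa$ requires.
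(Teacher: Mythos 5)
The paper never actually proves Fact~\ref{term-iteration}; it is asserted without argument as a routine adaptation of Easton-support chain-condition and factorization techniques to the term poset, so there is no paper proof to hold your attempt against. The overall shape of your plan is the right one: a $\Delta$-system argument for the chain condition (which does go through here because the term order on $\mathcal{A}(\A)$ is genuinely coordinate-wise, so gluing conditions with disjoint non-root supports is legal, and the union of two Easton sets is Easton); and for Mahloness, the product factorization $\mathcal{A}(\A) \cong \mathcal{A}(\A\restrict\alpha) \times \mathcal{A}(\A\restrict[\alpha,\kappa))$ with the tail $\alpha$-closed, Easton's lemma, stationarity of $S$ preserved by $\kappa$-cc, and a count to keep $\kappa$ inaccessible.

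The obstacle you flag in your last paragraph is the genuine gap, but the repair you lean toward (closing an induction on $\mathcal{A}(\A\restrict\beta^*)$ being $\beta^*$-cc) is not the intended route and, as you note, does not obviously close. What you actually need below the pivot is weaker and more elementary: after the $\Delta$-system produces $\kappa$-many conditions whose supports pairwise intersect in a fixed root $r\subseteq\gamma$ for some $\gamma<\kappa$, it is enough to find $\kappa$-many of them whose restrictions to $r$ are \emph{identical} (not merely pairwise compatible); then gluing the two functions along the root is immediate. That is a pigeonhole statement, not a chain-condition statement, and it requires only the cardinality bound $|\mathcal{A}(\A\restrict\gamma)| < \kappa$. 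This bound is not a consequence of the hypotheses of Fact~\ref{term-iteration} as literally written; it comes from the structure of the iteration being applied (the stages of $\A$ lie in $Z$ and force with posets of size roughly $\theta_\alpha^+$, far below the next element of $Z$, together with the requisite cardinal arithmetic in $V$). Strictly speaking this size hypothesis is missing from the statement of the Fact and ought to be made explicit; without it the statement is not provable.

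The same observation fixes the Mahloness step: you do need $\mathcal{A}(\A\restrict\alpha)$ to be $\alpha$-cc for $\alpha\in S$, but that is obtained by running the identical $\Delta$-system-plus-pigeonhole argument at $\alpha$, which is Mahlo by hypothesis, using $|\mathcal{A}(\A\restrict\gamma)|<\alpha$ for $\gamma<\alpha$. So the proof is a uniform argument parameterized by Mahlo cardinals rather than a genuine induction that must be closed. A final small point: your remark that the $2$-Mahloness of $\kappa$ ``supplies stationarily many pivots'' is a bit off; the stationarity of the set $S$ of Mahlo coordinates is already assumed and already gives unboundedly (indeed stationarily) many Mahlo cardinals below $\kappa$ to serve as pivots, so the $2$-Mahlo hypothesis is essentially redundant given the form in which $S$ is postulated.
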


\section{The preparation forcing} \label{preparation}

We work in a model $V$ and let $\kappa$ be a supercompact cardinal. We assume
that there is an increasing sequence $\langle \kappa_i \mid i \leq \omega+3
\rangle$ such that $\kappa_0=\kappa$, $\kappa_\omega = \sup \kappa_i$,
$\kappa_{\omega+1}=\kappa_\omega^+$ and for all $i$ (except $\omega$)
$\kappa_{i+1}$ is the least Mahlo cardinal above $\kappa_i$.  For simplicity we
set $\theta=\kappa_{\omega+3}$.  For inaccessible $\alpha$, we define $\langle
\alpha_i \mid i \leq \omega+3 \rangle$ and $\alpha_{\omega+3}=\theta_\alpha$ as
we did for $\kappa$.

 We fix a supercompactness measure $U$ on $\mathcal{P}_\kappa(\theta)$ and let
$j:V \to M$ be the ultrapower map.   It is straightforward to show that there is
a set $Z \subseteq \kappa$ such that $\kappa \in j(Z)$ and for every $\gamma \in
Z$, $\gamma$ is $\gamma_{\omega+1}$-supercompact and closed under
the function $\alpha \mapsto \theta_\alpha$.  In particular we have $j(\alpha
\mapsto \alpha_i)(\kappa) = \kappa_i$ for all $i \leq \omega+3$.

We define an iteration with Easton support where we do nontrivial forcing at
each $\alpha \in Z$.  Suppose that we have defined $\mathbb{A} \upharpoonright
\alpha$ for some $\alpha \in Z$.  At stage $\alpha$ we force with $(\P_0(\alpha)
* \C_0^+(\alpha)) \times (\P_1(\alpha) * \prod_{0<i\leq\omega+1} \C_i^+(\alpha))
 \times \Add^{V[\A\upharpoonright \alpha]}(\alpha_1,\theta_\alpha^+ \setminus
\theta_\alpha)$ where

\begin{enumerate}
\item $\P_0(\alpha) = \Add(\alpha_0,\alpha_2)$ as computed in
$V[\mathbb{A}\upharpoonright \alpha]$,
\item $\C_0(\alpha) = \C(\P_0(\alpha),\alpha_1,\alpha_2)$,
\item $\P_1(\alpha) = \Add(\alpha_1,\theta_\alpha)$ as computed in $V[\A
\restrict \alpha]$ and
\item for all $i$ with $0 < i \leq \omega+1$, $\mathbb{C}_i(\alpha) =
\mathbb{C}(\P_1(\alpha),\alpha_{i+1},\alpha_{i+2})$.

\end{enumerate}

We take the product $\prod_{0<i\leq \omega+1} \C_i^+(\alpha)$ with full support.

For ease of notation we let $\A = \A \upharpoonright \kappa$, $\P =
\P_0(\kappa)\times\P_1(\kappa)$ and in the extension by $\P$ we let $\C^+ =
\prod_{i \leq \omega+1} \C_i^+(\kappa)$.  We will now lift $j$ preserving its
large cardinal properties precisely with one further addition.  Let $G$ be
$\A$-generic and let $H = (H_0 *H_1) \times H_2$ be $(\P * \C^+) \times
\Add(\kappa_1,\theta^+ \setminus \theta)$-generic over $V[G]$.

\begin{lemma}\label{lift} In $V[G*H]$, there is a generic $G^**H^*$ for
$j(\mathbb{A} *(\P*\C^+) \times \Add^{V[\A]}(\kappa_{1},\theta^+ \setminus
\theta))$ such that $j$ extends to $j:V[G*H] \to M[G^**H^*]$ witnessing that
$\kappa$ is $\theta$-supercompact and for all $\gamma < j(\kappa_{1})$, there is
a function $f:\kappa_{1}\to \kappa_{1}$ such that $j(f)(\sup
j``\kappa_{1})=\gamma$. \end{lemma}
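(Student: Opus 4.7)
The plan is to lift $j$ in two main stages and then arrange the guessing property by careful choice of the generic. First, since $j(\A) \restrict (\kappa + 1)$ equals $\A * ((\P*\C^+) \times \Add(\kappa_1, \theta^+ \setminus \theta))$ as computed in $M$, the given $G * H$ is already generic for this initial segment over $M$, and I need only build $G^*_{\text{tail}}$ generic for the tail iteration $j(\A)_{(\kappa, j(\kappa))}$ over $M[G*H]$. A standard Menas-style diagonalization handles this: the tail is an Easton support iteration whose nontrivial stages in $M$ are sufficiently closed, and since $M^\theta \subseteq M$, its dense open sets in $M[G*H]$ can be enumerated suitably in $V[G*H]$. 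Setting $G^* = G * H * G^*_{\text{tail}}$, and noting that $j \restrict \A$ is the identity, $j$ extends to $j : V[G] \to M[G^*]$.

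Next I lift through $H = H_{\P*\C^+} \times H_2$. In $M[G^*]$ form the master conditions $p^* := \bigcup j``H_{\P*\C^+}$ and $q^* := \bigcup j``H_2$; each is a legitimate condition because the lifted $\Add$ and $\C^+$ factors are closed well above the sizes of the $j``$ image supports. A further diagonalization through the dense sets of $j((\P*\C^+) \times \Add(\kappa_1, \theta^+ \setminus \theta))$ in $M[G^*]$ below $(p^*, q^*)$ yields $H^* = H^*_{\P*\C^+} \times H_2^*$ with $j``H \subseteq H^*$. The lifted $j : V[G*H] \to M[G^*][H^*]$ then witnesses $\theta$-supercompactness of $\kappa$, as $M[G^*][H^*]$ is $\theta$-closed in $V[G*H]$ and contains $j``\theta$.

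The main obstacle is the guessing property, which I arrange by interleaving extra bookkeeping into the construction of $H_2^*$. In $V[G*H]$ partition $\theta^+ \setminus \theta$ into blocks $\{B_\xi : \xi < \theta^+\}$, each of order type $\kappa_1$ with enumeration $B_\xi = \{\zeta_{\xi, \beta} : \beta < \kappa_1\}$, and define $h_\xi : \kappa_1 \to \kappa_1$ by letting $h_\xi(\alpha)$ be the ordinal below $\kappa_1$ coded by the bits $\langle g_{\zeta_{\xi,\beta}}(\alpha) : \beta < \alpha \rangle$, where $g_\zeta$ is the Cohen function added at coordinate $\zeta$ by $H_2$. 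Under the lift, $j(h_\xi)(\sup j``\kappa_1)$ is read from bits of $H_2^*$ whose second coordinate is $\sup j``\kappa_1$; since $\sup j``\kappa_1 \notin j``\kappa_1$, these coordinates do not lie in the support of the master condition $q^*$ and are therefore free during the construction. By GCH in $V$, $|j(\kappa_1)|^V \leq |\kappa_1^{\mathcal{P}_\kappa(\theta)}/U|^V \leq \theta^+$, so I fix a surjection $b : \theta^+ \to j(\kappa_1)$ in $V$ and interleave into the diagonalization the $\theta^+$-many requirements $j(h_\xi)(\sup j``\kappa_1) = b(\xi)$. Requirements for distinct $\xi$ use disjoint coordinates, each decomposes into a length-$\kappa_1$ sequence of dense extensions, and the total bookkeeping still has length $\theta^+$ and is supported by the high closure of $j(\Add(\kappa_1, \theta^+\setminus\theta))$ in $M[G^*]$; setting $f_\gamma := h_{b^{-1}(\gamma)}$ then yields the required witnesses.
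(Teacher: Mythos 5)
Your first two stages (lifting through the tail of $j(\A)$ and through $H_0*H_1 = H_{\P*\C^+}$ with a master condition) match the paper and are fine. The genuine gap is in the third stage, and it is exactly the point the lemma is designed to be delicate about: you form $q^* := \bigcup j``H_2$ and treat it as a master condition for $j(\Add(\kappa_1,\theta^+\setminus\theta))$. But $H_2$ has cardinality $\theta^+$, so $\bigcup j``H_2$ has a domain that is essentially $j``((\theta^+\setminus\theta)\times\kappa_1)$, a set of size $\theta^+$; since $M$ is only $\theta$-closed (not $\theta^+$-closed, as $j$ witnesses $\theta$-supercompactness and no more), this set is \emph{not} an element of $M$, hence not a condition of $j(\Add(\kappa_1,\theta^+\setminus\theta))$, which lives in $M$. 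Your stated justification — that the forcing is ``closed well above the sizes of the $j``$ image supports'' — conflates two requirements: a candidate condition must both have $M$-size below $j(\kappa_1)$ \emph{and} actually be in $M$. The first holds here; the second fails. This is precisely why $\Add(\kappa_1,\theta^+\setminus\theta)$ is singled out in the preparation and why the lemma has content beyond a routine master-condition lift. Without repairing this, you have not lifted $j$ to $V[G*H]$ at all, so the guessing-property bookkeeping, however it is arranged, has nothing to attach to.

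The paper's resolution, which your sketch does not reproduce, uses the continuity of $j$ at $\theta^+$. One first builds \emph{some} generic $I$ for $j(\Add(\kappa_1,\theta^+\setminus\theta))$ over $M[G^* * H_0^* * H_1^*]$ by counting antichains. Then for each $\gamma<\theta^+$ one takes the restriction $I^\gamma$ of $I$ to coordinates below $j(\gamma)$ and performs surgery so that it contains the \emph{partial} master condition $p_\gamma = \bigcup_{p\in j``H_2} p\restrict j(\gamma) \cup \{((j(\beta),\sup j``\kappa_1),\eta_\beta): \beta<\gamma\}$; this $p_\gamma$ is built from $j\restrict\gamma$ and $H_2\restrict\gamma$, both of size $\leq\theta$, so it \emph{is} in $M[\ldots]$ and the surgery preserves genericity. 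The coherence of the $p_\gamma$'s makes the modified pieces $H^\gamma$ increasing, and since $j(\Add(\kappa_1,\theta^+\setminus\theta)) = \bigcup_{\gamma<\theta^+}\Add^M(j(\kappa_1),j(\gamma)\setminus j(\theta))$ by continuity at $\theta^+$, their union is generic and absorbs $j``H_2$. Note also that the guessing clause is handled much more directly than in your sketch: the ordinals $\eta_\beta$ enumerate $j(\kappa_1)$, and the surgery sets $j(g_\beta)(\sup j``\kappa_1)=\eta_\beta$ where $g_\beta$ is the $\beta$-th generic function added by $H_2$, so the witnesses $f$ are just the generic Cohen functions themselves; there is no need to introduce a block partition and derived functions $h_\xi$, and doing so forces you to worry about how many coordinates (potentially $|\sup j``\kappa_1|=\theta^+$-many) must be fixed to read off a single value.
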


\begin{proof} Much of the proof is standard, so we sketch some parts and give
details where important.  To construct $G^*$ we note
\begin{enumerate}
\item $j(\A \upharpoonright \kappa) \upharpoonright \kappa + 1 = \A
\upharpoonright \kappa * ( (\P*\C^+) \times \Add(\kappa_{1},\theta^+
\setminus \theta))$ and
\item the poset $j(\A)/G*H$ is $\theta^+$-closed in $V[G*H]$ and
has just $\theta^+$ antichains in $M[G*H]$.
\end{enumerate}
Standard arguments allow us to build a generic for the tail forcing and thus
form $G^*$ which is generic for $j(\A)$ over $M$ and such that $j$ lifts to
$j:V[G] \to M[G^*]$.

By closure properties of $M$, $j``(H_0*H_1) \in M[G^*]$ and is a directed set
there of cardinality $\theta$.  Moreover, $j(\P*\C^+)$ is $j(\kappa)$-directed
closed and hence we can find a master condition for $j``H_0*H_1$.  Another
routine counting of antichains allows us to build a generic $H_0^**H_1^*$ for
$j(\P*\C^+)$ over $M[G^*]$ containing our master condition.  At this point we
can lift $j$ to $j:V[G*(H_0*H_1)] \to M[G^**(H_0^**H_1^*)]$.

The final difficulty is to lift to the extension by $\Add(\kappa_{1},\theta^+
\setminus \theta)$.  It is here that we will control the values of $j(f)(\sup
j``\kappa_{1})$ for generic functions $f$ from $\kappa_{1}$ to
$\kappa_{1}$.  In preparation for this step we let $\langle \eta_\gamma \mid
\gamma \in \theta^+ \setminus \theta \rangle$ be an enumeration of $j(\kappa_{1})$.

By the usual argument counting antichains we can construct a generic $I$ for
$j(\Add(\kappa_{1},\theta^+ \setminus \theta))$ over $M[G^**H_0^**H_1^*]$.  For
each $\gamma$, let $I^\gamma$ be the restriction of $I$ to coordinates below
$j(\gamma)$.  Let $H^\gamma$ be the natural modification of $I^\gamma$ which
contains the condition $p_\gamma = \bigcup_{p \in j``H_2} p \upharpoonright
j(\gamma) \cup \{ ((j(\beta),\sup j``\kappa_{1}),\eta_\beta) \mid \beta < \gamma
\}$.  Since we have only changed a small number of coordinates, $H^\gamma$
remains generic.

Note that $j$ is continuous at $\theta^+$, so that
$j(\Add(\kappa_{1},\theta^+ \setminus \theta)) = \bigcup_{\gamma < \theta^+}
\Add(\kappa_{1}, j(\gamma) \setminus j(\theta))$.  Moreover the sequence of
$H^\gamma$ is increasing and hence $\bigcup_\gamma H^\gamma$ is generic for
$j(\Add(\kappa_{1},\theta^+ \setminus \theta))$ over $M[G^**H_0^**H_1^*]$ and
compatible with $j$.  This is enough to finish the proof. \end{proof}

For each $n<\omega$ we can derive a supercompactness measure $U_n$ on
$\mathcal{P}_\kappa(\kappa_n)$ from $j$ in $V[G*H]$.  We let $j_n:V[G*H] \to
M_n$ be the ultrapower map and let $k_n: M_n \to M[G^**H^*]$ be the factor map.
By the way that we lifted $j$, for $n \geq 1$ we have $j(\kappa_{1})+1
\subseteq \ran(k_n)$ and hence $\crit(k_n) > j(\kappa_{1})$.  This property
is essential in the arguments below.

We end the section with an proposition which will help us prove that the
relevant cardinals are preserved by the final forcing.

\begin{proposition} \label{outermodel}  For every $\alpha \in Z \cup
\{\kappa\}$ and every $n \leq \omega+3$ where $n$ is not $\omega$ or $\omega+1$,
there are an outer model $W$ of $V[G*H]$ and posets $\bar{\Y}$ and $\hat{\Y}$ such that
\begin{enumerate}
\item $W$ is an extension of $V$ by $\bar{\Y} \times \hat{\Y}$,
\item $\bar{\Y}$ is $\alpha_n$-Knaster,
\item $\hat{\Y}$ is $\alpha_n$-closed and
\item there is a generic for $\bar{\Y}$ in $V[G*H]$.
\end{enumerate}
\end{proposition}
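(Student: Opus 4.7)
The plan is to realize $V[G*H]$ as a subextension of an outer model $V[\bar{g} \times \hat{g}]$, where $\bar{g} \times \hat{g}$ is $V$-generic for the product of an $\alpha_n$-Knaster poset $\bar{\Y}$ and an $\alpha_n$-closed poset $\hat{\Y}$, both defined in $V$. The central tool is the term-forcing identity: an iteration $\P * \dot{\C}$ is absorbed by the product $\P \times \mathcal{A}(\P, \dot{\C})$, which lets us separate the ``Cohen/cc'' parts of each stage of the forcing from the ``collapse/closed'' parts.

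I would first handle the iteration $\A \restrict \alpha$: by Fact \ref{term-iteration}, the term poset $\mathcal{A}(\A \restrict \alpha)$ lives in $V$, is $\alpha$-cc, and has size $\alpha$, so (using the Mahloness of the cardinals involved) it is $\alpha_n$-Knaster and becomes a factor of $\bar{\Y}$. I would then go factor-by-factor through the stage-$\alpha$ forcing $\mathbb{R}_\alpha = (\P_0(\alpha) * \C_0^+(\alpha)) \times (\P_1(\alpha) * \prod_i \C_i^+(\alpha)) \times \Add(\alpha_1, \theta_\alpha^+ \setminus \theta_\alpha)$. A plain Cohen factor $\Add(\alpha_k, -)$ is $\alpha_k$-closed and, by the $\Delta$-system lemma, Knaster at every regular cardinal $\geq \alpha_{k+1}$. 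A Mitchell-like factor $\P * \C_j^+$ is absorbed into the product $\P \times \mathcal{A}(\P, \C_j)$; applying Fact \ref{cohenterms} to the Cohen structure inside $\C_j$, the $V$-term poset $\mathcal{A}(\P, \C_j)$ is $\alpha_{j+1}$-closed and (again by $\Delta$-system) Knaster at every regular cardinal $\geq \alpha_{j+2}$. Each factor is placed into $\bar{\Y}$ when $\alpha_n$ lies at or above its ``top'' cardinal (so it contributes $\alpha_n$-Knasterness) and into $\hat{\Y}$ when $\alpha_n$ lies at or below its ``bottom'' cardinal (so it contributes $\alpha_n$-closedness). Since $\alpha_n$ is itself one of the $\alpha_k$'s and the two cardinals attached to any single factor are consecutive in the $\alpha_k$-enumeration, $n$ never falls strictly between them; the exclusion $n \in \{\omega, \omega+1\}$ is exactly the point where the singular $\alpha_\omega$ and its successor would break the Mahlo-based $\Delta$-system upgrade from cc to Knaster.

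To finish, I would verify the four clauses. Clause (1) follows from the term-forcing absorption: the product $\P \times \mathcal{A}(\P, \dot{\C})$ canonically induces a generic for $\P * \dot{\C}$ via upward closure, so $V[\bar{g} \times \hat{g}]$ contains each piece of $G * H$ and hence contains $V[G*H]$. Clauses (2) and (3) hold because $\bar{\Y}$ and $\hat{\Y}$ are products of finitely (or at most countably) many $\alpha_n$-Knaster and $\alpha_n$-closed factors respectively, with the closedness of $\hat{\Y}$ being automatic and the Knasterness of $\bar{\Y}$ following from the Cohen/term-forcing structure of its factors. Clause (4) holds because $\bar{g}$ is read off directly from $G * H$ via the explicit definitions of the term forcings, all of which are computed inside $V[G*H]$. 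The main technical obstacle is the bookkeeping: the Cohen factors $\P_0(\alpha), \P_1(\alpha)$ and the late $\Add$-piece are defined over $V[\A \restrict \alpha]$ rather than $V$, so to place them into a $V$-definable $\bar{\Y}$ one must iterate the term-forcing absorption through the entire $\A \restrict \alpha$ layer, using Facts \ref{cohenterms} and \ref{term-iteration} in tandem to keep the whole apparatus $V$-definable.
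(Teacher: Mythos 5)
Your overall strategy — split $G*H$ into a ``small/Knaster'' layer that survives into $\bar{\Y}$ and a ``closed'' layer absorbed by term forcing into $\hat{\Y}$, routing each factor of the stage-$\alpha$ forcing according to whether $\alpha_n$ sits at or above its chain-condition cardinal or at or below its closure cardinal — is exactly the paper's strategy. Your intuition about $n=\omega,\omega+1$ being excluded because $\alpha_\omega$ is singular and $\alpha_{\omega+1}$ is a successor of a singular (so the Mahlo-based $\Delta$-system argument is unavailable) is also correct.

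However, your first step contains a genuine error. You propose to put $\mathcal{A}(\A\restrict\alpha)$ into $\bar{\Y}$, citing Fact~\ref{term-iteration}. There are two problems with this. First, Fact~\ref{term-iteration} requires that at each nontrivial stage $\beta$ the forcing $\A(\beta)$ is forced to be $\beta$-closed, but the iteration $\A$ does \emph{not} satisfy this: the stage-$\beta$ forcing contains $\P_0(\beta)=\Add(\beta,\beta_2)$, which adds Cohen subsets of $\beta$ and hence is not even $\beta^+$-distributive, let alone $\beta$-closed in the sense the Fact needs. So the Fact simply does not apply to $\A\restrict\alpha$, and you have no justification for its chain condition or, more importantly, for the claim that $\mathcal{A}(\A\restrict\alpha)$ reproduces $V[G\restrict\alpha]$ in a controlled way. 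Second, and more fundamentally, even if $\mathcal{A}(\A\restrict\alpha)$ were well-behaved, putting it into $\bar{\Y}$ makes clause~(4) of the proposition false: $V[G*H]$ contains $G\restrict\alpha$, which is a generic for the \emph{iteration} $\A\restrict\alpha$, not for the term poset $\mathcal{A}(\A\restrict\alpha)$. The term order is strictly finer, a filter on the iteration need not be a filter in the term order, and in general an iteration generic does not induce a term-forcing generic — the absorption goes only one way, from the product $\A\restrict\alpha\times\mathcal{A}(\cdots)$ down to the iteration $\A\restrict\alpha*\cdots$. So your proposed $\bar{g}$ cannot be ``read off directly from $G*H$'' as you claim.

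The paper avoids this by keeping the iteration $\A\restrict\alpha$ itself (together with the below-level-$n$ slice of the stage-$\alpha$ forcing, e.g.\ $\P_0(\alpha)*\C_0^+(\alpha)\times\P_1(\alpha)*\prod_{1\le i<n-1}\C_i^+$ when $n\ge 2$) as $\bar{\Y}$ — this is a $V$-definable poset and $V[G*H]$ visibly contains its generic — and applies term forcing only to the parts that genuinely are closed in the required sense: $\mathcal{A}(\A\restrict\alpha,\prod_{n-1\le i\le\omega+1}\C_i)$ for the high-index collapses at stage $\alpha$ (each $\C_i$ is $\alpha_{i+1}$-closed, so the term poset is $\alpha_n$-closed in $V$), and $\mathcal{A}(\A\restrict\alpha+1,\dot{\A}\restrict[\alpha+1,\kappa+1))$ for the tail, whose next nontrivial stage lies above $\theta_\alpha$ so the closure is far above $\alpha_n$. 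In short: apply $\mathcal{A}$ to the closed tails, not to the base iteration.
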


\begin{proof}Let $\alpha$ and $n$ be as in the proposition.  We work through a
few cases.  First suppose that $n=0$.  It is
straightforward to see that the iteration can be broken up as $\A
\upharpoonright \alpha$ which is $\alpha$-cc followed by the rest of the
iteration which is forced to be $\alpha$-closed.  So we can set $\bar{\Y}$ to be
$\A \upharpoonright \alpha$ and $\hat{\Y}$ to be the poset of $\A
\upharpoonright \alpha$-names for elements of the rest of the iteration.

Suppose that $n=1$.  Then there is an outer model of $V[G*H]$ where we have a
generic for $\mathcal{A}(\mathbb{A}\upharpoonright \alpha, \P_1(\alpha) \times
\prod_{i \leq \omega+1} \C_i) \times \mathcal{A}(\mathbb{A}\upharpoonright
\alpha +1, \dot{\A} \upharpoonright [\alpha+1, \kappa+1))$.  Note that this
poset is $\alpha_1$-closed in $V$. So we take it to be $\hat{\Y}$ and set
$\bar{\Y}$ to be $\A \upharpoonright \alpha * \P_0(\alpha)$.

Suppose that $n \geq 2$.  Then there is an outer model of $V[G*H]$ where we have
a generic for $\mathcal{A}(\mathbb{A}\upharpoonright \alpha, \prod_{n-1 \leq i
\leq \omega+1} \C_i) \times \mathcal{A}(\mathbb{A}\upharpoonright \alpha +1,
\dot{\A} \upharpoonright [\alpha+1, \kappa+1))$.  Note that this poset is
$\alpha_n$-closed in $V$.  So we can take it to be $\hat{\Y}$ and set $\bar{\Y}$
to be $\A \upharpoonright \alpha * \P_0(\alpha)*\C_0^+(\alpha) \times
\P_1(\alpha) * \prod_{1 \leq i <n-1}\C_i^+$. \end{proof}

It is immediate that each such $\alpha_n$ is preserved and for all $n \geq 1$,
$\alpha_{n+1} = \alpha_n^+$ in $V[G*H]$.  Further standard arguments using the
above proposition show that $\alpha_{\omega}$ and $\alpha_{\omega+1}$ are
preserved for all $\alpha \in Z \cup \{\kappa\}$.

\section{The main forcing} \label{mainforcing}

In order to define a diagonal Prikry forcing we define a collection of
Mitchell-like collapses which will go between the Prikry points.  Let
$\Q(\alpha,\beta) = \Q^0(\alpha,\beta) \times \Q^1(\alpha,\beta)$ where
\begin{align*}
\Q^0(\alpha,\beta) = & \Add(\alpha_{\omega+2},\beta) *
\C^{+}(\Add(\alpha_{\omega+2},\beta),\alpha_{\omega+3},\beta) \\
\Q^1(\alpha,\beta) = & \Add(\alpha_{\omega+3},\beta_1) *
\C^+(\Add(\alpha_{\omega+3},\beta_1), \beta, \beta_1)
\end{align*}
Extremely important to the construction is that we take all these posets as
defined in $V$.  The intention is to force $\beta$ to become
$\alpha_{\omega+3}^+$ and $\beta_1$ to be $\beta^+$.  We will be sloppy and
write $\Q(x,y)$ for $\Q(\kappa_x,\kappa_y)$.

We are now ready to define the diagonal Prikry poset.  Let $Z_n$ be the set of
$x$ in $\mathcal{P}_\kappa(\kappa_n)$ such that $\kappa_x = x \cap \kappa \in
Z$ and $\otp(x)$ is $\alpha_n$ where $\alpha = \kappa_x$.  Clearly $Z_n \in
U_n$.  For $n<m$, $x \in Z_n$ and $y \in Z_m$, we write $x \prec y$ for $x
\subseteq y$ and $\vert x \vert < \kappa_y$.

\begin{definition} Let $\R$ be a poset where conditions are of the form
\[\langle
q_0,x_0,q_1,x_1, \dots q_{n-1},x_{n-1},f_{n},F_{n+1},F_{n+2}, \dots \rangle\]
such that

\begin{enumerate}
\item $n\neq 0$ implies $q_0 \in
\Coll(\omega,\alpha_\omega)$ where $\alpha = \kappa_{x_0}$.
\item for all $i<n$, $x_i \in Z_i$ and $\vec{x}$ is $\prec$-increasing,
\item for all $i \in [1,n)$, $q_i \in \Q(x_{i-1},x_i)$,
\item There is a sequence of measure one sets $\langle A_i \mid i \geq n
\rangle$ such that $\dom(f_{n}) = A_{n}$ and for all $i \geq n+1$, $\dom(F_i) =
A_{i-1} \times A_{i}$,
\item $n=0$ implies for all $x \in A_n$, $f_n(x) \in \Coll(\omega,\alpha_\omega)$ where
$\alpha=\kappa_x$ and otherwise for
all $x \in A_n$, $f_{n}(x) \in \Q(x_{n-1},x)$
\item for all $i \geq n+1$ and $(x,y) \in \dom(F_i)$, $F_i(x,y) \in \Q(x,y)$.
\end{enumerate}

For a condition $p \in \P$ we adorn each part of $p$ with a superscript to
indicate its connection with $p$.  For example $q_0^p, x_0^p$ etc.  We also let
$\ell(p) = n$ denote the length of the condition $p$.  For $p,r \in \P$ we say
that $p \leq r$ if $\ell(p) \geq \ell(r)$ and
\begin{enumerate}
\item $\vec{x}^p$ end extends $\vec{x}^r$ and new points come from measure one
sets of $r$,
\item $\vec{q}^p \upharpoonright \ell(r) \leq \vec{q}^r$
\item $q_{\ell(r)}^p \leq f_{\ell(r)}^r(x_n^p)$
\item for all $i \in [\ell(r)+1, \ell(p))$, $q_i^p \leq F_i^r(x_{i-1},x_i)$
\item for all $i \geq \ell(p)$, $A_i^p \subseteq A_i^r$,
\item for all $x \in A_{\ell(p)}$, $f_{\ell(p)}^p(x) \leq
F_{\ell(p)}^r(x_{\ell(p)}^p,x)$ and 
\item for all $i \geq \ell(p)+1$ and all $(x,y) \in A^p_{i}\times A^p_{i+1}$, $F_i^p(x,y) \leq F_i^r(x,y)$.
\end{enumerate}
\end{definition}

We fix some terminology:

\begin{enumerate}
\item We call the lower part or stem of a condition any sequence of the form
$\langle q_0,x_0,q_1, \dots q_{n-1},x_{n-1} \rangle$.  Note that we have not included
$f_{n}$.
\item If $r$ is a condition, then we write $\stem(r)$ to denote the stem of $r$.
\item If $s =\langle q_0,x_0,q_1, \dots q_{n-1},x_{n-1} \rangle$ is a stem, then
we let $\top(s) = x_{n-1}$.
\item We call the one variable functions $f_n^p$ the $f$-part of $p$.
\item We call the sequence of two variable functions $\vec{F}^p$ the upper part
or constraint of $p$.

\end{enumerate}

\begin{remark}  For all $n<\omega$, there are $\kappa_{n-1}$ stems of length
$n$. \end{remark}

For each $i > 1$, the class of a function $F_i$ as above modulo
$U_{i-1}\times U_{i}$ is a member of $\Q_i = \Q(\kappa,j_{i}(\kappa))$ as
computed in the (external) ultrapower of $V$ by $U_{i-1}\times U_{i}$.  This
forcing is a product of Mitchell-like collapses below $j_{i-1}(\kappa_1)$ which
is the least (formerly) Mahlo cardinal above $j_{i-1}(\kappa)$.

We fix some notation.  For $i > 1$, $M_{i-1}^{i} = \Ult(V[G*H],U_{i-1} \times
U_{i}))$ and $j_{i-1}(M[G^**H^*)) = \Ult(M_{i-1},j_{i-1}(U))$.  By standard
arguments there are elementary embeddings, $k: M_{i-1}^{i} \to
j_{i-1}(M[G^**H^*])$ and $\hat{k}: j_{i-1}(M[G^**H^*]) \to \Ult(V[G*H],U \times
U)$.

\begin{claim} $\crit(\hat{k} \circ k) > j(\kappa_1)$ \end{claim}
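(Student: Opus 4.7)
The plan is to show $\crit(k)>j(\kappa_1)$ and $\crit(\hat k)>j(\kappa_1)$ separately; the claim then follows since every ordinal $\alpha\leq j(\kappa_1)$ is fixed by both maps and hence by their composition.

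First I would identify $k$ with $j_{i-1}(k_i)$: the embedding $k_i:M_i\to M[G^**H^*]$ is definable in $V[G*H]$, so applying $j_{i-1}$ gives an embedding from $j_{i-1}(M_i)$ to $j_{i-1}(M[G^**H^*])$, and the standard iterated-ultrapower identification $j_{i-1}(M_i)=\Ult(M_{i-1},j_{i-1}(U_i))=M_{i-1}^i$ shows that this image is exactly $k$. By elementarity, $\crit(k)=j_{i-1}(\crit(k_i))$, and since $\crit(k_i)>j(\kappa_1)$ and $j_{i-1}$ does not decrease ordinals, $\crit(k)>j(\kappa_1)$.

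Next I would analyze $\hat k$ as the factor map induced by $k_{i-1}$. Writing $j_{i-1}(M[G^**H^*])=\Ult(M_{i-1},j_{i-1}(U))$ and $\Ult(V[G*H],U\times U)=\Ult(M[G^**H^*],j(U))$, the identity $j=k_{i-1}\circ j_{i-1}$ gives $k_{i-1}(j_{i-1}(U))=j(U)$, so $\hat k$ is realized as $[f]_{j_{i-1}(U)}\mapsto [k_{i-1}(f)]_{j(U)}$. To conclude $\crit(\hat k)>j(\kappa_1)$, I would represent any ordinal $\alpha\leq j(\kappa_1)$ in the source as $[f]_{j_{i-1}(U)}$ for some $f\in M_{i-1}$ with range bounded in $j(\kappa_1)+1$ and use that $k_{i-1}$ fixes this range pointwise (since $\crit(k_{i-1})>j(\kappa_1)$) to argue that $[k_{i-1}(f)]_{j(U)}=\alpha$ as an ordinal.

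The main obstacle is the critical-point bound for $\hat k$: I need to verify carefully that the pointwise-fixing of the range of $f$ by $k_{i-1}$ really does force $[k_{i-1}(f)]_{j(U)}$ to equal $\alpha$ (and not some other ordinal $\leq j(\kappa_1)$), which amounts to a Fubini-style comparison of the push-forwards of the relevant measures through the projection $x\mapsto x\cap\kappa_{i-1}$. This is routine but requires careful bookkeeping of the iterated ultrapower structure on both sides.
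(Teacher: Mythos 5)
Your decomposition (treat $k$ and $\hat k$ separately) and your identification of $k$ with $j_{i-1}(k_i)$ are exactly the paper's; that part is fine. For $\hat k$ you also make the correct identification in terms of $k_{i-1}$ (the paper's text writes ``$k_i$'' at this point, evidently a typo), but the ``Fubini-style comparison of push-forwards'' you flag at the end is a genuine gap in your write-up and is, in fact, not needed. The observation you are missing is that $\hat k$ is literally the restriction of $k_{i-1}$ to the class $j_{i-1}(M[G^**H^*])$. Indeed, $j_{i-1}(M[G^**H^*])=\Ult(M_{i-1},j_{i-1}(U))$ is a transitive class of $M_{i-1}$, so each $[f]_{j_{i-1}(U)}$ is an honest element (Mostowski collapse) of $M_{i-1}$; applying the elementary embedding $k_{i-1}:M_{i-1}\to M[G^**H^*]$ to the definition of the ultrapower element gives
\[
k_{i-1}\bigl([f]_{j_{i-1}(U)}^{M_{i-1}}\bigr)=[k_{i-1}(f)]_{k_{i-1}(j_{i-1}(U))}^{M[G^**H^*]}=[k_{i-1}(f)]_{j(U)}^{M[G^**H^*]}=\hat k\bigl([f]_{j_{i-1}(U)}\bigr).
\]
Thus $\hat k=k_{i-1}\restrict j_{i-1}(M[G^**H^*])$, and $\crit(\hat k)\geq\crit(k_{i-1})>j(\kappa_1)$ follows at once, with no need to chase representatives with bounded range or compare measures on the two sides. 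With that one observation substituted for your final paragraph, your proof coincides with the paper's.
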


\begin{proof} Note that $k$ is given by $j_{i-1}(k_{i})$ and hence has critical
point above $j_{i-1}(j(\kappa_1)) \geq j(\kappa_1)$ by the way that we extended
the embedding $j$.  Further note that $\hat{k}$ is given by the restriction of
$k_i$ to the domain of $\hat{k}$ and hence has critical point above
$j(\kappa_1)$ by the way that we extended the embedding $j$.  It follows that
the composition has critical point above $j_1(\kappa)$.  \end{proof}

\begin{corollary} For $i > 1$, $\Q_i$ is $\Q(\kappa,j(\kappa))$ as
computed in $M$. \end{corollary}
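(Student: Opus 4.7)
The plan is to derive the corollary as a direct consequence of the preceding claim, by chasing factor maps. The key step is to identify $j_i(\kappa) = j(\kappa)$; once this is in hand, the corollary reduces to checking that $\Q(\kappa,j(\kappa))$ is computed identically across the several ultrapowers in play.

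First I would establish $j_i(\kappa) = j(\kappa)$. The factor $k_i \colon M_i \to M[G^**H^*]$ satisfies $k_i \circ j_i = j$, so $k_i(j_i(\kappa)) = j(\kappa)$. From the remarks immediately after Lemma \ref{lift} we have $\crit(k_i) > j(\kappa_1)$, and in particular $j(\kappa) < j(\kappa_1) < \crit(k_i)$. Any ordinal whose $k_i$-image lies below $\crit(k_i)$ must itself lie below $\crit(k_i)$ and so be fixed by $k_i$; hence $j_i(\kappa) = k_i(j_i(\kappa)) = j(\kappa)$.

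Next I would unpack the uniform definition $\Q(\alpha,\beta) = \Q^0(\alpha,\beta) \times \Q^1(\alpha,\beta)$. Every component is an $\Add$ or $\C$ poset whose parameters lie at or below $\beta_1$, so $\Q(\kappa,j(\kappa))$ is a set of rank at most the least Mahlo above $j(\kappa)$. By elementarity of $j$, and because $M$ is closed enough to compute Mahloness correctly below $\theta$, this least Mahlo is exactly $j(\kappa_1)$. The preceding claim gives $\crit(\hat k \circ k) > j(\kappa_1)$, so $M_{i-1}^i$, $j_{i-1}(M[G^**H^*])$, $\Ult(V[G*H],U\times U)$ and $M$ all agree on sets of rank at most $j(\kappa_1)$. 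Hence $\Q(\kappa,j(\kappa))$ is interpreted identically in each of these models, and combining this with $j_i(\kappa) = j(\kappa)$ yields $\Q_i = \Q(\kappa,j_i(\kappa)) = \Q(\kappa,j(\kappa))^M$, as required.

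The main obstacle is the small bookkeeping needed to verify that the $V$-definition of $\Q$, applied inside an external ultrapower, really only reads off objects of rank below $j(\kappa_1)$. This is immediate from the shape of $\Q^0$ and $\Q^1$ together with the identification of $j(\kappa)$'s least Mahlo successor with $j(\kappa_1)$, and the only serious set-theoretic content is already packaged in the preceding claim.
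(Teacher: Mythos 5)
Your argument correctly uses the critical point of $\hat k \circ k$ to conclude that $M_{i-1}^i$, $j_{i-1}(M[G^**H^*])$, and $\Ult(V[G*H],U\times U)$ agree on sets of rank a bit above $j(\kappa_1)$, and the reduction to a rank bound on $\Q(\kappa,j(\kappa))$ is the right idea. However, you then silently add $M = \Ult(V,U)$ to the list of agreeing models, attributing this to the same claim about $\crit(\hat k \circ k)$. That claim compares only the three models linked by $k$ and $\hat k$; it says nothing about $M$. The factor chain $M_{i-1}^i \to j_{i-1}(M[G^**H^*]) \to \Ult(V[G*H],U\times U)$ terminates at an ultrapower of $V[G*H]$ by $U\times U$, not at $M$, and there is no elementary embedding among these whose critical point argument you can cite.

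The missing step, and the paper's final sentence, is the separate observation that $\Ult(V,U\times U)$ is $\Ult(M,j(U))$, hence a $j(\theta)$-supercompactness ultrapower computed inside $M$; since $j(\kappa_1) < j(\theta)$, this inner ultrapower and $M$ agree on the hereditarily small sets that enter into the computation of $\Q(\kappa,j(\kappa))$. Your argument never invokes this closure and so never bridges from the $U\times U$ ultrapower to $M$. Two smaller points: the remark that ``$M$ is closed enough to compute Mahloness correctly below $\theta$'' does no work, since $j(\kappa_1) > \theta$ (the identification of $j(\kappa_1)$ as the least Mahlo above $j(\kappa)$ in $M$ is just elementarity of $j$), and the rank of $\Q(\kappa,j(\kappa))$ is slightly above $j(\kappa_1)$ rather than at most $j(\kappa_1)$, though this is harmless since $\crit(\hat k \circ k)$ is inaccessible.
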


\begin{proof} Recall that $\Q_i$ is $\Q(\kappa,j_{i-1}(\kappa))$ as computed in
the external ultrapower of $V$ by $U_{i-1}\times U_i$.  Of course using the high
critical point of $k_{i-1}$, $j_{i-1}(\kappa)=j(\kappa)$.  Moreover, by the
previous lemma, $(\hat{k} \circ k)(\Q_i) = (\hat{k} \circ k)``\Q_i$ and
$(\hat{k} \circ k) \upharpoonright \Q_i$ is the identity map.  It follows that
$\Q_i$ is $\Q(\kappa,j(\kappa))$ as computed in $\Ult(V,U \times U)$.  However
this ultrapower is highly closed inside $M = \Ult(V,U)$ and the conclusion
follows. \end{proof}

So we have shown that each $\Q_i$ for $i \geq 1$ is in fact the same forcing and
we drop the dependence on $i$ and just write $\Q$.  In the course of the proof
of the Prikry lemma below the forcing $\Q$ will be represented in different ways
in different ultrapowers.  For clarity record the following remark.

\begin{remark} \label{qinups} For each $i>1$, $\Q$ is isomorphic to the set $\{
f \in M_{i-1} \mid \dom(f) \in j_{i-1}(U_i)$ and for all $x \in \dom(f)$, $f(x)
\in \Q(\kappa,\kappa_x)$ as computed the external ultrapower of $V$ by
$U_{i-1}\}$ with the natural ordering.  These functions $f$ are the ones that
represent elements of $\Q_i$ in the ultrapower by $j_{i-1}(U_i)$.
\end{remark}

Next we work towards showing that forcings $\Q$ and $\Q(\alpha,\beta)$ are
well-behaved.

\begin{claim} \label{dist0} The full support power $\Q^\omega$ is
$<\kappa_{\omega+2}$-distributive in $V[G*H]$. \end{claim}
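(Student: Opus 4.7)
The plan is to establish the stronger statement that $\Q$ itself is $<\kappa_{\omega+2}$-closed in $V[G*H]$; from this the full support power $\Q^\omega$ is $<\kappa_{\omega+2}$-closed (given a descending $\gamma$-sequence for $\gamma<\kappa_{\omega+2}$, take coordinatewise lower bounds in $\Q$ and assemble them), and closure gives distributivity.

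To see that $\Q=\Q^0\times\Q^1$ is $<\kappa_{\omega+2}$-closed, I argue each factor separately. Each factor has the form $\P*\dot{\C}(\P,\sigma,\tau)^+$, where $\P$ is either $\Add(\kappa_{\omega+2},j(\kappa))$ or $\Add(\kappa_{\omega+3},j(\kappa_1))$ and $\sigma$ is correspondingly $\kappa_{\omega+3}$ or $j(\kappa)$. The Cohen factor $\P$ is $<\kappa_{\omega+2}$-closed by the usual union-of-domains argument. The collapse factor $\dot{\C}^+$ is forced to be $<\sigma$-closed: the term poset $\C$ is $<\sigma$-closed in $V$ because unions of fewer than $\sigma$-many conditions are obtained by taking unions of domains (which remain small) and taking greatest lower bounds of the constituent names at each coordinate (using the $<\sigma$-closure of $\Add(\sigma,1)$), and the enrichment $\C^+$ has the same underlying set with a refined ordering, so lower bounds coming from $\C$ remain lower bounds in $\C^+$. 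Applying the standard fact that a two-step iteration of $<\mu$-closed posets is $<\mu$-closed, each $\Q^i$ is $<\kappa_{\omega+2}$-closed (in fact $\Q^1$ is $<\kappa_{\omega+3}$-closed), and hence so is $\Q$.

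The main subtlety is transferring the closure of the term posets from $V$ to $V[G*H]$; this amounts to checking that $\kappa_{\omega+2}$, $\kappa_{\omega+3}$, and $j(\kappa)$ remain regular in $V[G*H]$ so that unions of fewer than $\kappa_{\omega+2}$-many small domains remain small. The regularity of $\kappa_{\omega+2}$ and $\kappa_{\omega+3}$ follows from Proposition \ref{outermodel} and the remarks following it (each is a successor cardinal in $V[G*H]$), while the regularity of $j(\kappa)$ is inherited from $M$ via the closure properties of the lifted embedding constructed in Lemma \ref{lift}. Once this is unpacked, the whole argument is essentially bookkeeping.
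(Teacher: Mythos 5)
The central assertion of your proof---that $\Q$ is $<\kappa_{\omega+2}$-closed in $V[G*H]$---is false, and the error is precisely located in your final paragraph. You write that the ``main subtlety'' is checking that $\kappa_{\omega+2}$, $\kappa_{\omega+3}$, and $j(\kappa)$ remain regular so that unions of small domains remain small. But regularity is not the issue. The poset $\Q$ is computed in $M$ (hence in $V$), so its conditions are $V$-objects. The forcing $\A * (\P*\C^+) \times \Add(\kappa_1,\theta^+\setminus\theta)$ that takes $V$ to $V[G*H]$ certainly adds new $\gamma$-sequences of $V$-sets for various $\gamma<\kappa_{\omega+2}$ (for instance $\P_1 = \Add(\kappa_1,\theta)$ adds new $\omega$-sequences and new $\kappa_1$-sequences of ordinals). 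Given a new descending $\gamma$-sequence $\langle p_\alpha\mid\alpha<\gamma\rangle$ of conditions in, say, the Cohen factor $\Add(\kappa_{\omega+2},j(\kappa))_V$, the union $\bigcup_\alpha p_\alpha$ is indeed a partial function of size $<\kappa_{\omega+2}$, but there is no reason at all for it to lie in $V$, and hence no reason for it to be a member of the $V$-computed poset. Closure is a $\Pi_1$-type property and simply does not pass from $V$ to $V[G*H]$; the whole content of the claim is to control precisely this failure.

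The paper's own proof is built around this obstruction. It proves only that $\Q^\omega$ is $\kappa_{\omega+2}$-closed in $V$ (which follows exactly as you argue, from the closure of $M$), and then invokes Proposition \ref{outermodel} with $n=\omega+2$ to find an outer model $W\supseteq V[G*H]$ of the form $V^{\bar{\Y}\times\hat{\Y}}$ with $\bar{\Y}$ being $\kappa_{\omega+2}$-cc, $\hat{\Y}$ being $\kappa_{\omega+2}$-closed, and $V[G*H]$ containing a $\bar{\Y}$-generic. Easton's lemma applied to the pair $\bar{\Y}$ and $\hat{\Y}\times\Q^\omega$ (the latter $\kappa_{\omega+2}$-closed in $V$) then yields that every $<\kappa_{\omega+2}$-sequence of ordinals in $W^{\Q^\omega}$ lies in $V^{\bar{\Y}}\subseteq V[G*H]$, which gives distributivity over $V[G*H]$ without ever claiming closure there. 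If you want to keep your proof in the spirit you started, you should drop the closure-in-$V[G*H]$ claim entirely and instead import Proposition \ref{outermodel} and the Easton's lemma step; the closure-in-$V$ part of your argument is fine and is in fact what the paper uses.
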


\begin{proof} $\Q$ is defined in $M$ (hence $V$) and is
$\kappa_{\omega+2}$-closed in $V$ by the closure of $M$. Hence $\Q^\omega$ is
$\kappa_{\omega+2}$ closed in $V$.  Let $W$, $\bar{\Y}$ and $\hat{\Y}$ be the
outer model and posets from Proposition \ref{outermodel} applied to
$\kappa_{\omega+2}$.  By Easton's lemma, every $<\kappa_{\omega+2}$-sequence
from $W^{\Q^\omega}$ is in $V^{\bar{\Y}}$, but $V[G*H]$ contains a generic for
$\bar{\Y}$.  So we are done.\end{proof}

A similar argument establishes the following claim.

\begin{claim}\label{dist} For all $\alpha<\beta \leq \kappa$, $\Q(\alpha,\beta)$
is $< \alpha_{\omega+2}$-distributive in $V[G*H]^{\Q^\omega}$. \end{claim}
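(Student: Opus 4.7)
The plan is to mimic the proof of Claim \ref{dist0}, now applying Proposition \ref{outermodel} at the parameters $\alpha$ and $n = \omega+2$ (which is permitted since $n \neq \omega, \omega+1$). First I would observe that $\Q(\alpha,\beta)$ is defined already in $V$: its outer factor $\Q^0(\alpha,\beta)$ is built from $\Add(\alpha_{\omega+2},\beta)$ and a Mitchell-like $\C^+$ collapse whose term part is computed in $V$, and similarly for $\Q^1(\alpha,\beta)$. Both factors are $\alpha_{\omega+2}$-closed in $V$. Since $\Q^\omega$ is $\kappa_{\omega+2}$-closed (hence $\alpha_{\omega+2}$-closed) in $V$ by the reasoning of Claim \ref{dist0}, the product $\Q^\omega \times \Q(\alpha,\beta)$ is $\alpha_{\omega+2}$-closed in $V$.

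Next, let $W$, $\bar{\Y}$, and $\hat{\Y}$ be supplied by Proposition \ref{outermodel} applied to $\alpha$ and $n = \omega+2$, so that $\bar{\Y}$ is $\alpha_{\omega+2}$-Knaster in $V$, $\hat{\Y}$ is $\alpha_{\omega+2}$-closed in $V$, and $V[G*H]$ contains a generic $\bar{G}$ for $\bar{\Y}$ with
\[
V[\bar{G}] \subseteq V[G*H] \subseteq W = V[\bar{G}][\hat{G}]
\]
for some $\hat{G}$ which is $\hat{\Y}$-generic over $V[\bar{G}]$. Choose a $\Q^\omega \times \Q(\alpha,\beta)$-generic over $W$; this is in particular generic over $V[G*H]$. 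Now suppose $s$ is a $<\alpha_{\omega+2}$-sequence of ordinals in the resulting extension of $V[G*H]$ by $\Q^\omega * \Q(\alpha,\beta)$. Then $s$ lies in the full product extension
\[
V^{\bar{\Y} \times \hat{\Y} \times \Q^\omega \times \Q(\alpha,\beta)}.
\]
By Easton's lemma, since $\bar{\Y}$ is $\alpha_{\omega+2}$-cc and $\hat{\Y} \times \Q^\omega \times \Q(\alpha,\beta)$ is $\alpha_{\omega+2}$-closed in $V$, we conclude $s \in V^{\bar{\Y}} = V[\bar{G}] \subseteq V[G*H]$, which is certainly contained in $V[G*H]^{\Q^\omega}$. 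This establishes the claimed distributivity.

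The only substantive addition beyond Claim \ref{dist0} is packaging $\Q^\omega \times \Q(\alpha,\beta)$ as the $\alpha_{\omega+2}$-closed half of the Easton decomposition; the rest is bookkeeping with the outer model from Proposition \ref{outermodel}. The main point to verify carefully is that $\Q(\alpha,\beta)$ really is $\alpha_{\omega+2}$-closed \emph{in $V$}, which relies on the italicized remark before the definition of $\R$ that all these posets are taken as defined in $V$. Once that is in hand, the conclusion is an immediate application of Easton's lemma in the style of Claim \ref{dist0}.
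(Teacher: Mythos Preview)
Your proposal is correct and follows essentially the same approach as the paper, which simply says ``A similar argument establishes the following claim'' in reference to Claim~\ref{dist0}. You have correctly spelled out that similar argument: $\Q(\alpha,\beta)$ and $\Q^\omega$ are both $\alpha_{\omega+2}$-closed in $V$, so one invokes Proposition~\ref{outermodel} at $\alpha_{\omega+2}$ and runs the same Easton's lemma argument, absorbing $\Q^\omega\times\Q(\alpha,\beta)$ into the closed side along with $\hat{\Y}$.

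Two minor remarks. First, Proposition~\ref{outermodel} is stated only for $\alpha\in Z\cup\{\kappa\}$, so strictly speaking the argument requires this hypothesis on $\alpha$; the paper is equally loose about this, and in all applications the relevant $\alpha$ lies in $Z$. Second, your phrasing ``choose a $\Q^\omega\times\Q(\alpha,\beta)$-generic over $W$'' is fine, but the cleanest way to conclude distributivity in $V[G*H]$ (rather than just for the particular generic chosen) is to observe that $<\alpha_{\omega+2}$-distributivity of a fixed poset is downward absolute from $W$ to the submodel $V[G*H]$, since the statement ``$\bigcap_{i<\gamma} D_i$ is dense'' is absolute once the $D_i$ and the poset are fixed.
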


\begin{remark} \label{smalldist}  It is clear from the proof above that the
conclusion of the previous claim holds in any forcing extension by a poset from
$V$ of size less than $\alpha_{\omega+1}$.  The small poset can be included in
$\bar{\Y}$.  \end{remark}

We pause here to prove that the posets $\Q(\alpha,\beta)$ have the desired
effect on the cardinals between $\alpha$ and $\beta$.

\begin{proposition} \label{productpreservation}  In any extension of $V[G*H]$ by
a poset of size less than $\alpha_{\omega+1}$ from $V$, $\Q(\alpha,\beta)$ is
$\beta_1$-cc, preserves the cardinals $\alpha_{\omega+3}$ and $\beta$ and forces
$ \beta = \alpha_{\omega+3}^+$ and $\beta_1 = \beta^+$. \end{proposition}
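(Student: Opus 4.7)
The plan is to verify the claimed properties of $\Q(\alpha,\beta)$ in the ground model $V$ and then transfer them to $V[G*H][S]$ (where $S\in V$ has size less than $\alpha_{\omega+1}$) using Proposition \ref{outermodel} together with Easton's lemma. In each application of the outer-model proposition, $S$ will be absorbed into the Knaster factor $\bar{\Y}$, as in Remark \ref{smalldist}, so that $\bar{\Y}\times S$ remains Knaster at the relevant cardinal.

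In $V$, direct cardinal-arithmetic bookkeeping (using GCH at the relevant points) shows that $\Q^0(\alpha,\beta)$ is $\alpha_{\omega+2}$-closed and $\beta$-cc and collapses $(\alpha_{\omega+3},\beta)$ to $\alpha_{\omega+3}$, while $\Q^1(\alpha,\beta)$ is $\alpha_{\omega+3}$-closed and $\beta_1$-cc and collapses $(\beta,\beta_1)$ to $\beta$. In particular, in $V$, the product $\Q$ is $\beta_1$-cc, preserves $\alpha_{\omega+3}$ and $\beta$, and forces the two claimed successor relations. To facilitate the transfer, I replace each two-step iteration $\P*\C^+$ by its canonical product outer model $\P\times\mathcal{A}(\P,\C)$ from earlier in the paper; since $\mathcal{A}(\P,\C)$ is $\sigma$-closed in $V$, each factor of $\Q$ thereby admits a concrete Knaster-times-closed decomposition in $V$.

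To transfer a preservation statement at a cardinal $\mu$, I apply Proposition \ref{outermodel} to obtain $W=V[\bar{\Y}\times\hat{\Y}]\supseteq V[G*H][S]$ with $\bar{\Y}$ $\mu$-Knaster and $\hat{\Y}$ $\mu$-closed in $V$. Specifically, I use $(\beta,n=1)$ for $\beta_1$-cc of $\Q$, $(\beta,n=0)$ for preservation of $\beta$, and $(\alpha,n=\omega+3)$ for preservation of $\alpha_{\omega+3}$. In $V$, the full product of $\bar{\Y}$, $\hat{\Y}$, and the Knaster-closed decomposition of $\Q$ from the previous paragraph is $\mu$-cc by iterated applications of Easton's lemma: Knaster factors combine with cc factors to preserve cc, and each $\mu$-closed factor preserves cc by Easton. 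Consequently, $\Q$ is $\mu$-cc in $W$ (via the projection from its product outer model), hence in $V[G*H][S]$, and the desired preservation follows.

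The collapse statements $\beta=\alpha_{\omega+3}^+$ and $\beta_1=\beta^+$ in $V[G*H][S][\Q]$ then follow since the bijections witnessing the collapses of $(\alpha_{\omega+3},\beta)$ and $(\beta,\beta_1)$ already belong to $V[\Q^0]$ and $V[\Q^1]$ respectively, while $\alpha_{\omega+3}$, $\beta$, and $\beta_1$ have just been shown to remain cardinals. The main obstacle is the bookkeeping needed to verify the Easton combination at each of the three target cardinals, but this becomes routine once the term-forcing product outer model has replaced the iterations by products whose cc and closure in $V$ can be checked factor-by-factor.
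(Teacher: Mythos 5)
Your overall strategy (decompose $\Q$ into a Knaster/cc piece and a closed piece via term forcing, apply Proposition~\ref{outermodel} at the appropriate cardinals $\beta_1$, $\beta$, $\alpha_{\omega+3}$, absorb the small poset $S$ into $\bar{\Y}$ as in Remark~\ref{smalldist}, and invoke Easton's lemma) is exactly the paper's, and the parameter choices $(\beta,n=1)$, $(\beta,n=0)$, $(\alpha,n=\omega+3)$ for the three cases are correct. However, the final inference contains a genuine slip: you conclude ``$\Q$ is $\mu$-cc in $W$, and the desired preservation follows,'' but for $\mu=\beta$ and $\mu=\alpha_{\omega+3}$ the poset $\Q$ is \emph{not} $\mu$-cc -- it collapses cardinals strictly between $\alpha_{\omega+3}$ and $\beta$ (resp.\ between $\beta$ and $\beta_1$), so it has antichains of size $\geq\mu$. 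Likewise, ``the full product of $\bar{\Y}$, $\hat{\Y}$, and the Knaster-closed decomposition of $\Q$\ldots is $\mu$-cc'' cannot be right, since that product contains a nontrivial $\mu$-closed factor.

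What the argument actually needs -- and what your setup already supplies, so this is a local repair rather than a change of approach -- is the pairing that the paper makes explicit: split $\Q$ (passing to its term-forcing product outer model) as $\Q_{\mathrm{cc}}\times\Q_{\mathrm{cl}}$ around $\mu$, and then observe that $V[G*H][S][\Q]$ is contained in an extension of $V$ by $(\,\Y\times\bar{\Y}\times\Q_{\mathrm{cc}}\,)\times(\,\hat{\Y}\times\Q_{\mathrm{cl}}\,)$, where the first factor is $\mu$-cc (Knaster $\times$ small $\times$ cc) and the second is $\mu$-closed. Preservation of $\mu$ follows from Easton's lemma applied to this product, not from any cc property of $\Q$ itself. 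For $\mu=\beta$ one puts $\Q^0\times\Add(\alpha_{\omega+3},\beta_1)$ in the cc part and the $\C$-term forcing for $\Q^1$ in the closed part; for $\mu=\alpha_{\omega+3}$ one instead puts $\Add(\alpha_{\omega+2},\beta)$ in the cc part and $\C^0\times\Q^1$ in the closed part. With this correction your proof matches the paper's.
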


\begin{proof}  Let $\Y$ be a poset of size less than $\alpha_{\omega+1}$ in
$V$.  First we consider the outer model $W$ and posets $\bar{\Y}$ and $\hat{\Y}$
from Proposition \ref{outermodel} applied to $\beta_1$.  In $V$, the product $\Y
\times \bar{\Y} \times \Q(\alpha,\beta)$ is $\beta_1$-Knaster.  By Easton's
lemma, it follows that $\Q(\alpha,\beta)$ is $\beta_1$-cc in $W^{\Y}$ and hence
in $V[G*H]^{\Y}$.

Second we consider the outer model $W$ and posets $\bar{\Y}$ and $\hat{\Y}$ from
Proposition \ref{outermodel} applied to $\beta$.  Using the definition of the
$\C$ posets, there is an outer model of $V^{\Q(\alpha,\beta)}$ which is an
extension by the partial order $\Q^0(\alpha,\beta) \times
\Add(\alpha_{\omega+3}, \beta_1) \times \C(\Add(\alpha_{\omega+3},
\beta_1),\beta,\beta_1)$.  Hence $W^{\Q(\alpha,\beta)}$ is contained in a
generic extension by the product of $\Y \times \bar{\Y} \times
\Q^0(\alpha,\beta) \times \Add(\alpha_{\omega+3}, \beta_1)$ and $\hat{\Y} \times
\C(\Add(\alpha_{\omega+3}, \beta_1),\beta,\beta_1)$.  The first piece is
$\beta$-cc and the second is $\beta$-closed.  Hence $\beta$ is preserved in
$V[G*H]^{\Y \times \Q(\alpha,\beta)}$.

The argument that $\alpha_{\omega+3}$ is preserved is similar to the argument
that $\beta$ is preserved, but we split up $\Q^0(\alpha,\beta)$ instead of
$\Q^1(\alpha,\beta)$ and incorporate $\Q^1(\alpha,\beta)$ into the closed part.

The argument that cardinals in the intervals $(\alpha_{\omega+3},\beta)$ and
$(\beta, \beta_1)$ are collapsed is standard for Mitchell type posets.
\end{proof}

\begin{claim} \label{qomegapres}  Forcing with $\mathbb{Q}^\omega$ over $V[G*H]$
preserves cardinals up to $\kappa_{\omega+3}$.  \end{claim}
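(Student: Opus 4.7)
For cardinals at most $\kappa_{\omega+2}$, preservation is immediate from Claim \ref{dist0}: $<\kappa_{\omega+2}$-distributivity rules out any new function from a $\mu<\kappa_{\omega+2}$ into a smaller ordinal, so no cardinal in this range is collapsed. It remains to preserve $\kappa_{\omega+3}$.

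The plan is to dominate $V[G*H]^{\Q^\omega}$ by a $V$-generic extension that decomposes into a $\kappa_{\omega+3}$-cc piece and a $\kappa_{\omega+3}$-closed piece, and then apply Easton's lemma. Invoke Proposition \ref{outermodel} with $\alpha=\kappa$ and $n=\omega+3$ to get $V$-posets $\bar{\Y}$ ($\kappa_{\omega+3}$-Knaster) and $\hat{\Y}$ ($\kappa_{\omega+3}$-closed) with $V[\bar Y]\subseteq V[G*H]\subseteq V[\bar Y\times\hat Y]$. Using the two-step structure $\Q=\Q^0\times\Q^1=(A_0*\dot{C}_0^+)\times(A_1*\dot{C}_1^+)$ with $A_0=\Add(\kappa_{\omega+2},j(\kappa))$ and $A_1=\Add(\kappa_{\omega+3},j(\kappa)_1)$, the term-forcing framework of Section \ref{preliminaries} realizes $V^{\Q^\omega}$ as an inner model of the $V$-generic extension by
\[A_0^\omega\times A_1^\omega\times\mathcal{A}(A_0,\dot{C}_0^+)^\omega\times\mathcal{A}(A_1,\dot{C}_1^+)^\omega.\]
Computed in $V$ (using GCH at $\kappa_{\omega+2}$), $A_0$ is $\kappa_{\omega+3}$-Knaster, so $A_0^\omega$ is too, while $A_1$, $\mathcal{A}(A_0,\dot{C}_0^+)$ and $\mathcal{A}(A_1,\dot{C}_1^+)$ are each $\kappa_{\omega+3}$-closed in $V$ (the first by definition; the term posets because their forcing-interpretations are forced to be $\kappa_{\omega+3}$-closed), and closure survives $\omega$-powers. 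Hence $\bar{\Y}\times A_0^\omega$ is $\kappa_{\omega+3}$-Knaster and $\hat{\Y}\times A_1^\omega\times\mathcal{A}(A_0,\dot{C}_0^+)^\omega\times\mathcal{A}(A_1,\dot{C}_1^+)^\omega$ is $\kappa_{\omega+3}$-closed, so Easton's lemma gives that the full product preserves $\kappa_{\omega+3}$ as a cardinal over $V$.

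To descend to $V[G*H]$, given a $\Q^\omega$-generic $g$ over $V[G*H]$, pick $\hat Y'$ generic for $\hat{\Y}$ over $V[G*H][g]$ in an outer universe. Then $(\bar Y,\hat Y',g)$ is $V$-generic for the big product, so $\kappa_{\omega+3}$ is a cardinal in $V[\bar Y,\hat Y',g]\supseteq V[G*H][g]$, hence in $V[G*H][g]$. The main obstacle will be making the term-forcing decomposition of the $\omega$-power of the enriched iterations precise—in particular, checking that the $\omega$-product of the pairs $A_i\times\mathcal{A}(A_i,\dot{C}_i^+)$ induces a generic for $(A_i*\dot{C}_i^+)^\omega$, and that the Knaster and closure properties claimed for the factors are genuinely computed in $V$ rather than in any of the intervening extensions.
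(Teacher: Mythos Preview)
Your proposal is correct and follows essentially the same approach as the paper: reduce to preserving $\kappa_{\omega+3}$ via Claim~\ref{dist0}, decompose $\Q$ (hence $\Q^\omega$) over $V$ into a $\kappa_{\omega+3}$-cc piece and a $\kappa_{\omega+3}$-closed piece using the term-forcing/projection idea, combine with the $\kappa_{\omega+3}$-cc of the forcing adding $G*H$, and apply Easton's lemma. The paper does this more tersely, asserting directly that the iteration adding $G*H$ is $\kappa_{\omega+3}$-cc rather than routing through Proposition~\ref{outermodel}; note that with $\alpha=\kappa$ and $n=\omega+3$ the proposition's $\hat{\Y}$ is in fact trivial, so your invocation collapses to exactly that assertion.
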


\begin{proof} By Claim \ref{dist0}, it is enough to show that
$\kappa_{\omega+3}$ is preserved.  Recall that $\mathbb{Q}$ is computed in an
ultrapower of $V$ which is closed under $\theta= \kappa_{\omega+3}$-sequences.
In particular, it can be written as the projection of a product
$\Add(\kappa_{\omega+2},j(\kappa))$ which is $\kappa_{\omega+3}$-cc in $V$ and
$\mathbb{C}(\Add(\kappa_{\omega+2},j(\kappa)),\kappa_{\omega+3},j(\kappa))
\times \Q^1(\kappa,j(\kappa))$ which is $\kappa_{\omega+3}$-closed in $V$.
Since the iteration to add $G*H$ is $\kappa_{\omega+3}$-cc, we have that
$\kappa_{\omega+3}$ is preserved when we force with $\Q^\omega$. \end{proof}

We are now ready to prove the Prikry lemma.  The main elements come from a
combination of \cite{jamesthesis} and \cite{su1}.  Suppose that we force with
$\mathbb{Q}^\omega$ to obtain $ \vec{K} = \langle K_n \mid n > 1 \rangle$.
We let $\bar{\R}$ be the set of conditions $r$ such that $\langle [F_n^r] \mid n
\geq \max(\ell(p),1) \rangle \in \prod_{n \geq \max(\ell(p),2)}K_n$ ordered as a suborder of $\R$.  The following claims are
straightforward.

\begin{claim} In $V[G*H]^{\mathbb{Q}^\omega}$, $\bar{\R}$ is
$\kappa_{\omega}$-centered below any condition of length at least one.
\end{claim}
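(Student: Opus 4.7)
My plan is to partition the conditions $r \leq r_0$ in $\bar{\R}$, where $\ell(r_0) = n_0 \geq 1$, by pairs $(\stem(r), [f_{\ell(r)}^r]_{U_{\ell(r)}})$, and then to show that each partition class is centered. Note that conditions with the same stem automatically have the same length, say $n \geq n_0$.

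For the counting step I would use that by the preceding remark there are at most $\kappa_{n-1}$ stems of length $n$, while for a condition of length $n$ the values $f_n^r(x) \in \Q(x_{n-1}, x)$ force the class $[f_n^r]_{U_n}$ to lie in $\Q(\kappa_{x_{n-1}}, \kappa)^{M_n}$. Because $\Q$ is defined in $V$ and $M_n$ is closed under $\kappa$-sequences, the external cardinality of $\Q(\kappa_{x_{n-1}}, \kappa)^{M_n}$ agrees with the $V$-cardinality and is at most $\kappa$ under the GCH-like hypothesis at $\kappa$ preserved through the preparation. Summing gives at most $\sum_{n \geq n_0} \kappa_{n-1} \cdot \kappa = \kappa_\omega$ partition classes.

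For the centeredness within a class, I would take finitely many conditions $r_1, \dots, r_k$ sharing the stem $s$ of length $n$ and sharing the class $g = [f_n^{r_j}]_{U_n}$. The set $A = \{x : f_n^{r_1}(x) = \cdots = f_n^{r_k}(x)\}$ is then in $U_n$, and intersecting with each $A_n^{r_j}$ gives a measure-one set on which I define $f_n^r$ to be the common value. For each $i \geq n+1$, the classes $[F_i^{r_j}]_{U_{i-1} \times U_i}$ all lie in $K_i$, and since $K_i$ is a filter there is $q_i \in K_i$ below all of them; I pick a representative $F_i^r$ of $q_i$ which is pointwise below every $F_i^{r_j}$ on a measure-one product (obtained by intersecting the finitely many measure-one sets on which the ultrapower comparisons hold). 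Assembling the stem, $f_n^r$, and the sequence $F_{n+1}^r, F_{n+2}^r, \dots$ with the intersected measure-one sets yields a condition $r \in \bar{\R}$ with $r \leq r_j$ for each $j$, as $[F_i^r] = q_i \in K_i$ by construction.

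The main obstacle is the cardinality estimate on the $[f_n^r]_{U_n}$: once one is satisfied that $\Q(\kappa_{x_{n-1}}, \kappa)^{M_n}$ has external cardinality at most $\kappa$, the rest is a routine combination of pointwise agreement of the $f$-parts with directedness of the components $K_i$ of the generic for $\Q^\omega$.
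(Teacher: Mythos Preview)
Your proposal is correct and follows exactly the approach the paper indicates: partition by pairs $(\stem(r),[f_{\ell(r)}^r])$, count the pairs, and use directedness of the $K_i$'s (together with agreement of the $f$-parts on a measure-one set) to produce a common lower bound. One small correction to your counting: the poset $\Q(\kappa_{x_{n-1}},\kappa)$ involves $\kappa_1$ (via the factor $\Q^1$, which uses $\Add(\alpha_{\omega+3},\kappa_1)$ and a collapse up to $\kappa_1$), so its cardinality is $\kappa_1$ rather than $\kappa$; since $\kappa_1<\kappa_\omega$ this does not affect your final bound $\sum_n \kappa_{n-1}\cdot\kappa_1=\kappa_\omega$.
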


Note that in $\bar{\R}$, conditions of length at least one with
the same stem and (equivalence class of) $f$-part are compatible and there are
at most $\kappa_\omega$ such pairs.

\begin{claim} $\Q^\omega *\bar{\R}$ projects to $\R$. \end{claim}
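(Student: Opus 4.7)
The plan is to exhibit a natural projection $\rho: \R \to \Q^\omega$ in $V[G*H]$ and then identify the quotient $\R/\dot{G}_{\Q^\omega}$ with $\bar{\R}$; standard forcing theory then yields $\R \sim \Q^\omega * \bar{\R}$ and in particular a projection of the latter onto the former. By the above Corollary, each equivalence class $[F_n^r]$ coming from a condition $r \in \R$ lives in the common forcing $\Q$ (independent of $n$), so the map $\rho(r) = \langle [F_n^r] \mid n > \ell(r) \rangle$, padded trivially to range over the index set of $\Q^\omega$, is a well-defined function into $\Q^\omega$.

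The first step is to verify $\rho$ is a projection. Order preservation follows from clause (7) in the definition of $\leq_\R$: if $r' \leq_\R r$, then $F_n^{r'}(x,y) \leq F_n^r(x,y)$ in $\Q(x,y)$ on the $(U_{n-1} \times U_n)$-measure-one set $A_{n-1}^{r'} \times A_n^{r'}$, whence $[F_n^{r'}] \leq [F_n^r]$ in $\Q$. For the lifting property, given $\vec{q} \leq \rho(r)$ in $\Q^\omega$, I pick representatives $g_n$ with $[g_n] = \vec{q}(n)$ via Remark \ref{qinups}. Since $\vec{q}(n) \leq [F_n^r]$ in the ultrapower ordering on $\Q$, there is $B_n \in U_{n-1} \times U_n$ on which $g_n(x,y) \leq F_n^r(x,y)$ in $\Q(x,y)$. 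Build $r^* \leq r$ preserving the stem and $f$-part of $r$, shrinking the measure-one sets coherently so that $A_{n-1}^{r^*} \times A_n^{r^*} \subseteq B_n \cap (A_{n-1}^r \times A_n^r)$, and setting $F_n^{r^*} = g_n \restrict (A_{n-1}^{r^*} \times A_n^{r^*})$. Then $\rho(r^*) = \vec{q}$, confirming $\rho$ is a projection.

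With $\rho$ in hand, standard forcing theory gives $\R \sim \Q^\omega * (\R/\dot{G}_{\Q^\omega})$. A condition $r \in \R$ belongs to the quotient exactly when $\rho(r)$ is compatible with the $\Q^\omega$-generic $\vec{K}$, i.e., when $[F_n^r] \in K_n$ for all relevant $n$; but this is precisely the defining property of $\bar{\R}$. Hence $\R$ is forcing equivalent to $\Q^\omega * \bar{\R}$, and in particular the latter projects to the former. The main technical wrinkle is the coherent shrinking of measure-one sets in the lifting step: the set $A_n^{r^*}$ must simultaneously fit inside the second component of $B_n$ and the first component of $B_{n+1}$, which is handled by iteratively intersecting projections of the $B_m$'s to obtain a single measure-one set in each $U_n$ meeting all constraints, a routine maneuver in this style of Prikry argument.
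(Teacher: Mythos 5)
The overall strategy you outline — project $\R$ onto the sequence of constraint classes, identify the quotient with $\bar{\R}$, and invoke the standard two-step decomposition — is the right idea and is essentially what the paper leaves implicit. However, the specific map $\rho(r) = \langle [F_n^r] \mid n > \ell(r)\rangle$, padded trivially to fill out the index set, is not actually a projection into $\Q^\omega$, and in fact is not even order-preserving. The problem is at coordinates below the length of a condition: if $r' \leq_\R r$ with $\ell(r') > \ell(r)$, then for $\ell(r) < n \leq \ell(r')$ you have $\rho(r')(n) = 1_\Q$ (the trivial padding) whereas $\rho(r)(n) = [F_n^r]$ is a nontrivial condition, so $\rho(r') \not\leq \rho(r)$ in $\Q^\omega$. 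For the same reason the lifting property fails: if $\vec{q}(n)$ is nontrivial at a padded coordinate of $\rho(r)$, no direct extension $r^*$ of $r$ can have $\rho(r^*)(n) \leq \vec{q}(n)$, since that coordinate remains padding.

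The fix is precisely what the paper signals a few lines later with ``Clearly both $\mathbb{R}$ and $\Q^\omega$ project to $\Q^\omega/fin$'': the natural target of $\rho$ is the quotient $\Q^\omega/fin$, where discrepancies on finitely many coordinates are identified, and there your order-preservation and lifting arguments go through verbatim (padding becomes irrelevant, and in the lifting step you only need to realize $\vec{q}(n)$ from some $N$ onward). One then observes that $\bar{\R}$ is (forced to be) the quotient $\R/I$ where $I$ is the induced $\Q^\omega/fin$-generic, that $\Q^\omega$ projects onto $\Q^\omega/fin$, and hence $\Q^\omega * \bar{\R}$ projects onto $\Q^\omega/fin * \bar{\R}$, which in turn projects onto $\R$. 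Alternatively, you can bypass $\Q^\omega/fin$ entirely by exhibiting a direct projection $\Q^\omega * \bar{\R} \to \R$: work on the dense set of pairs $(\vec{q},\dot{r})$ where $\vec{q}$ decides $\dot{r} = \check{r}$ (so $\vec{q}(n) \leq [F_n^r]$ for all relevant $n$), and send such a pair to $r$ with its $F$-part replaced by representatives of $\vec{q}(n)$ restricted to coherently shrunk measure-one sets; order-preservation and lifting are then routine and the padding issue never arises. Either repair is minor, but as written the proposed $\rho$ is not a projection into $\Q^\omega$.
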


This claim allows us to prove the Prikry lemma for $\bar{\mathbb{R}}$ in place
of $\mathbb{R}$.

\begin{lemma}  Work in $V[G*H][\vec{K}]$. Let $r \in \bar{\mathbb{R}}$ be a
condition of length at least $1$ and $\varphi$ be a statement in the forcing
language.  There is an $r^* \leq^* r$ which decides $\varphi$. \end{lemma}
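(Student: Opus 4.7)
The plan is to follow the template for Prikry lemmas in diagonal supercompact Prikry forcings from \cite{gitiksharon, jamesthesis, su1}, adapted to $\bar{\R}$. The three ingredients are the $\kappa_\omega$-centeredness of $\bar{\R}$ below length-$\geq 1$ conditions (remarked just above the lemma), the distributivity results of Claims \ref{dist0} and \ref{dist}, and the normal measures $U_n$.

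I will prove the following trichotomy: for every $r \in \bar{\R}$ of length $n \geq 1$ and every formula $\varphi$ there is $r^* \leq^* r$ such that either $r^*$ forces $\varphi$, or $r^*$ forces $\neg\varphi$, or for every one-point extension $r' \leq r^*$ of length $n+1$ no $\leq^*$-extension of $r'$ decides $\varphi$. The third alternative will be ruled out by iterating the one-step analysis up through successive stem lengths: at each stage we pass to a direct extension and shrink measure-one sets, producing in the limit an $r^{**} \leq^* r$ below which no finite stem extension admits any $\leq^*$-extension deciding $\varphi$ --- a contradiction with the existence of any extension of $r$ deciding $\varphi$, since such an extension factors as finitely many Prikry points followed by a $\leq^*$-extension.

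The core one-step argument uses the ultrapower representation afforded by $U_n$. For each $x \in A_n^r$ let $r_x$ denote the canonical length-$(n+1)$ extension of $r$ obtained by appending $(f_n^r(x), x)$ to the stem and restricting the first coordinate of each higher constraint to $\{x\}$. Using Claim \ref{dist0} to refine the upper part product and Claim \ref{dist} to refine each $\Q(\alpha,\beta)$-piece, I can search simultaneously over all $x$ for a uniform $\leq^*$-extension strategy, since at each $x$ there are only $\leq \kappa_\omega$-many stem/$f$-part compatibility classes to iterate through. Normality of $U_n$ then yields a measure-one set $B \subseteq A_n^r$ on which each $x \in B$ admits a uniformly defined $\leq^*$-extension of $r_x$ with a constant decision status in $\{+, -, ?\}$.

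In the $+$ or $-$ case I glue these uniform extensions into a single $r^* \leq^* r$: the $f$-part at stage $n+1$ becomes the function $x \mapsto$ (the $f$-part value at $n+1$ from the witnessing extension of $r_x$); the higher constraints and measure-one sets are assembled by pointwise infimum and diagonal intersection. The main obstacle I anticipate is checking that $r^* \in \bar{\R}$, i.e.\ that each new constraint class $[F_m^{r^*}]$ lies in the generic $K_m$. This is precisely where working inside $V[G*H][\vec{K}]$ is essential: each witnessing constraint strengthens something whose class already lies in $K_m$, so the pointwise infimum's class remains in $K_m$ by the filter property, and the $\kappa_\omega$-centeredness of $\bar{\R}$ below $r$ legitimizes the $f$-part infimum.
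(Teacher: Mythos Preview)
Your gluing step has a genuine gap. You propose to take, for each Prikry point $x \in A_n^r$, a decisive direct extension $r_x^* \leq^* r_x$, and then amalgamate these into a single $r^* \leq^* r$ of length $n$ by setting the higher constraints to the ``pointwise infimum'' of the $F_m^{r_x^*}$. But there are $\kappa$-many values of $x$, and the forcing $\Q$ (in which the classes $[F_m^{r_x^*}]$ live) is only $\kappa_{\omega+2}$-closed. Even granting that each $[F_m^{r_x^*}] \in K_m$, the filter $K_m$ is only $<\kappa_{\omega+2}$-directed-closed, so there is no reason for a common lower bound of a $\kappa$-sized family to exist, let alone to lie in $K_m$. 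Your appeal to the filter property and to $\kappa_\omega$-centeredness does not help here: centeredness concerns compatibility of conditions sharing a fixed stem and $f$-part class, not the existence of infima over unboundedly many distinct stems.

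The paper resolves exactly this difficulty by passing to the ultrapower $M_n$ and replacing the $\kappa$-many points $x$ by the single representative $j_n``\kappa_n$. A one-point extension in $j_n(\R)$ with last Prikry point $j_n``\kappa_n$ has stem part $(s,q)$ ranging over only $\kappa_{n-1}$-many possibilities, and its $f$-part represents an element of $\Q$. For each pair $(s,q)$ one writes down a dense subset $D_{s,q}$ of $\Q$ (definable in $V[G*H][\langle K_i : i>n\rangle]$), intersects the $\kappa_{n-1}$-many $D_{s,q}$ using the $<\kappa_{\omega+2}$-distributivity of $\Q$ in that model, and then---this is the key move your proposal is missing---uses the \emph{genericity} of $K_n$ over that model (via the product lemma) to choose a single $F_n$ with $[F_n]\in K_n$ meeting the intersection. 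Reflecting back via \L o\'s gives a single two-variable refinement that works uniformly for all $x$ on a measure-one set. This is precisely how one avoids the $\kappa$-sized infimum.
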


It follows immediately that $\mathbb{R}$ itself has the Prikry property, since
the projection from the previous claim fixes the length of a condition.  We
break the proof into many rounds.

\begin{claim} There is an $r_0 \leq^* r$ such that for all $p \leq r_0$ if $p$
is at least a one point extension of $r_0$ and it decides $\varphi$, then
there is an upper part $\vec{F}$ such that $\stem(p) \frown
F_{\ell(p)}^{r_0}(x_{\ell(p)-1}^p) \frown \vec{F}$ decides $\varphi$.
\end{claim}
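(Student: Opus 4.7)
The argument I would carry out is a standard Prikry-style fusion over the possible stems of one-or-more-point extensions of $r$, modified by working inside $\bar{\R}$ so that the upper-part equivalence classes are frozen by the generic $\vec{K}$. Enumerate all stems $\langle s_\xi : \xi < \kappa_\omega \rangle$ that properly end-extend $\stem(r)$ with new coordinates drawn from the $A_i^r$; by the remark on counting stems there are at most $\kappa_\omega$ of these. I would build a $\leq^*$-decreasing sequence $r = r^0 \geq^* r^1 \geq^* \dots$ in $\bar{\R}$ of length $\kappa_\omega$ and take $r_0$ to be a $\leq^*$-lower bound.

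At stage $\xi$, set $n = \ell(s_\xi)$ and $x = x^{s_\xi}_{n-1}$, and form the ``canonical completion'' $p_\xi$ of $s_\xi$ relative to $r^\xi$: stem $s_\xi$, $f$-part $y \mapsto F^{r^\xi}_n(x,y)$, and upper part and measure-one sets inherited from $r^\xi$ at coordinates above $n$. Ask whether some direct extension of $p_\xi$ in $\bar{\R}$ decides $\varphi$. If not, set $r^{\xi+1} = r^\xi$. If so, choose such a witness $p^*$ and define $r^{\xi+1} \leq^* r^\xi$ by (i) modifying $F^{r^\xi}_n$ only at the first-coordinate input $x$, replacing $F^{r^\xi}_n(x,\cdot)$ with the $f$-part of $p^*$; this is a change on a $U_{n-1}$-null set and therefore preserves the $U_{n-1}\times U_n$-equivalence class of $F^{r^\xi}_n$, so $r^{\xi+1}$ remains in $\bar{\R}$; and (ii) refining the upper-part functions and measure-one sets at coordinates $> n$ to match those of $p^*$ (whose classes also lie in $\vec{K}$). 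Then the canonical completion of $s_\xi$ with respect to $r^{\xi+1}$ is exactly $p^*$ and decides $\varphi$. To verify the claim, if $p \leq r_0$ with $\stem(p) = s_\xi$ decides $\varphi$, then $p \leq^* p^{r_0}_\xi \leq^* p^{r^{\xi+1}}_\xi = p^*$, so $p$ and $p^*$ decide $\varphi$ the same way; taking $\vec{F} := \vec{F}^{r_0}$ yields the required witness.

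The central difficulty is accumulating $\kappa_\omega$ pointwise modifications coherently and producing the $\leq^*$-lower bound $r_0$. Because each step preserves all upper-part $U_{n-1}\times U_n$-classes (they remain equal to the fixed classes in $\vec{K}$), the residue at each coordinate is a $\leq$-decreasing sequence of representatives of a single class. The $\kappa_{\omega+2}$-closure of $\Q$ in $V$ yields pointwise lower bounds for such sequences of length $\kappa_\omega$, and the $<\kappa_{\omega+2}$-distributivity of $\Q^\omega$ in $V[G*H]$ from Claim \ref{dist0} places the fused object in $V[G*H][\vec{K}]$, giving $r_0 \in \bar{\R}$ with $r_0 \leq^* r^\xi$ for every $\xi < \kappa_\omega$. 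This is the same flavour of fusion carried out in \cite{jamesthesis} and \cite{su1}.
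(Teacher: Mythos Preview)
Your fusion-over-stems approach has a genuine gap at step (ii), the global refinement of the upper parts $F_m$ for $m > n$. For a fixed coordinate $m$, every stem of length strictly less than $m$ (of which there are $\sum_{n<m}\kappa_{n-1}$, so at least $\kappa$ many) potentially triggers a pointwise strengthening of $F_m$. Thus at each point $(x,y)$ in the domain of $F_m$ you produce a descending chain in $\Q(\kappa_x,\kappa_y)$ of length $\geq\kappa$; but $\Q(\kappa_x,\kappa_y)$ is only $\kappa_{x,\omega+2}$-closed in $V$, and $\kappa_{x,\omega+2}<\kappa$. Hence there is no reason for a pointwise lower bound to exist, and the condition $r_0\leq^* r^\xi$ for all $\xi$ cannot be formed as claimed. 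Your appeal to the $\kappa_{\omega+2}$-closure of $\Q$ conflates closure of the quotient poset of equivalence classes (which is indeed highly closed) with pointwise closure of the individual collapse posets $\Q(\kappa_x,\kappa_y)$ living below $\kappa$; the former only guarantees a common \emph{class} --- which you already have, since all the classes coincide by design --- not a common \emph{representative}.

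The paper avoids this by working level by level rather than stem by stem. At level $n$ one observes that for each pair $(s,q)$ the set $D_{s,q}$ of $f$-parts that already carry a deciding upper part whenever some strengthening does is dense open in $\Q$; there are only $\kappa_{n-1}$ such pairs, so by the distributivity of $\Q$ in $V[G*H][\langle K_i : i>n\rangle]$ their intersection $D$ is still dense; and since $K_n$ is generic over that model one picks a single $F_n$ with $[F_n]\in K_n$ landing in $D$, then reflects via \L o\'s to a measure-one set $A_n$. In this way each $F_m$ is refined exactly once (at stage $m$), and the lower bound $r_0$ is obtained simply by assembling the $F_n$'s --- no long descending chains in the small posets $\Q(\kappa_x,\kappa_y)$ ever arise. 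Your step (i) taken alone is in fact harmless, since for fixed $x\in Z_{n-1}$ the number of stems of length $n$ with top $x$ is well below $\kappa_{x,\omega+2}$; but step (i) alone does not pin down the witnessing upper part $\vec{F}$, so you would still need to record the witnesses $\vec{F}^{p^*}$ separately rather than fold them into $r^{\xi+1}$ --- and organizing this coherently is exactly what the paper's level-by-level argument accomplishes.
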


\begin{proof} We go by induction on the length of extensions of $r$.  Let
$r^{\ell(r)} =r$.  Assume that we have constructed $r^n$ for some $n<\omega$.
Work in the ultrapower $M_n$ and consider conditions of length $n+1$ of the form

\[ s \frown (q,j_n``\kappa_n) \frown (f^+, \vec{F}^+) \leq \stem(r^n) \frown
(j_n(f^{r^n}),j_n(\vec{F}^{r_n})) \]

Note that the collection of possible stems $s$ here is exactly the collection of
$j_n$ pointwise images of stems of length $n$ from $\mathbb{R}$, since $\top(s)
\prec j_n ``\kappa_n$ implies that $\top(s) = j_n``x$ for some $x$ in
$\mathcal{P}_{\kappa}(\kappa_{n-1})$.  Further, $q \in
\mathbb{Q}(\kappa_x,\kappa)$ where $x$ is as above, which has size $\kappa_1$.
It follows that there are at most $\kappa_{n-1}$ many such pairs $s,q$.

Note that $f^+ \leq j_n(F_{n+1}^{r^n})(j_n``\kappa_n)$ and in particular by
Remark \ref{qinups} there is a function $F^*$ such that $f^+ =
j(F^*)(j_n``\kappa_n)$.  For each $s$ and $q$, the set $D_{s,q}$ defined as
\begin{align*}  \{ f^+ \mid \text{if } \exists \vec{F}^+, \exists f^{++} \leq
f^+ \text{ such that } s \frown (q,j_n``\kappa_n) \frown (f^{++},\vec{F}^+)
\text{ decides } \varphi \\
 \text{ then } \exists \vec{F}^{++} s \frown (q,j_n``\kappa_n) \frown
(f^+,\vec{F}^{++}) \text{ decides } \varphi \} \end{align*}
is dense open in $\mathbb{Q}_n$, which is just $\mathbb{Q}$.  Moreover it can be
defined in the model $V[G*H][\langle K_i \mid i > n \rangle]$
where $\mathbb{Q}$ is $<\kappa_{\omega+2}$-distributive.  So the set $D =
\bigcap_{s,q} D_{s,q}$ is dense open in $\mathbb{Q}$.  Since $D$ can be defined
in $V[G*H][\langle K_i \mid i > n \rangle]$ and by the product
lemma $K_n$ is generic for $\mathbb{Q}$ over this model, we can take a function
$F_{n}$ so that $[F_{n}]_{U_{n-1}\times U_{n}} \in K_{n}$ and
$j_n(F_{n})(j_n``\kappa_n) \in D$.  We can assume that $F_{n} \leq
F_{n}^{r^n}$ on a $U_{n} \times U_{n+1}$ large set.

By Los' theorem the set $A_n$ given by
\begin{align*} \{ x \mid \forall s \text{ if } \top(s) = x, \exists \vec{F}^+ \exists f^{++}
\leq F_n(x),\ s \frown (f^{++},\vec{F}^+) \text{ decides } \varphi \\
 \text{ then } \exists \vec{F}^{++},\ s \frown (F_n(x),\vec{F}^{++}) \text{
decides } \varphi \} 
\end{align*}
is $U_n$ measure one.  We define $r^{n+1}$ by refining
$F_n^{r^n}$ to $F_n\upharpoonright A$ and leaving the rest of $r^n$ unchanged.

We let $r_0$ be a lower bound for $\langle r^n \mid n \geq \ell(r) \rangle$.  It
is straightforward to check that $r_0$ satisfies the conditions of the claim.
\end{proof}

\begin{claim} There is an $r_1 \leq^* r_0$ such that if $p \leq r_1$ is at least
a one point extension and it decides $\varphi$, then $\stem(p) \frown
F_{\ell(p)-1}^{r_1}(x_{\ell(p)-1}^p) \frown \vec{F}^{r_1}$ decides $\varphi$.
\end{claim}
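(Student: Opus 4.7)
The plan is to iterate the previous claim so as to absorb its witnessing upper parts into $r_1$'s own upper part. Construct $r_1$ as a $\leq^*$-lower bound of a decreasing sequence $\langle r^n : n \geq \ell(r_0) \rangle$ with $r^{\ell(r_0)} = r_0$; the lower bound exists because the direct-extension order inherits $\kappa_{\omega+2}$-closure from $\mathbb{Q}$.

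At each stage $n$, build $r^{n+1} \leq^* r^n$ by an inner iteration of length $\kappa$ through the pairs $(s_\alpha, q_\alpha) \in M_n$, where $s_\alpha$ is a length-$n$ stem and $q_\alpha$ lies in $\mathbb{Q}(\kappa_{\top(s_\alpha)}, \kappa)$. Let $r^{n+1}_0 = r^n$. At a successor sub-stage $\alpha + 1$, apply a relativized form of the previous claim to $r^{n+1}_\alpha$: in its proof, replace the dense open $D_{s,q} \subseteq \mathbb{Q}$ by its intersection with the set of $f^+$'s whose witnessing upper part $\vec{F}^{++}$ can be taken below the current target $\vec{F}^\dagger := \vec{F}^{r^{n+1}_\alpha}\upharpoonright_{>n+1}$. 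The same case analysis as in the previous proof shows that this intersection is still dense. Absorb the resulting $\vec{F}^{++}$ into $r^{n+1}_\alpha$'s upper part via Los' theorem to define $r^{n+1}_{\alpha+1} \leq^* r^{n+1}_\alpha$, so that the tail $\vec{F}^{r^{n+1}_{\alpha+1}}\upharpoonright_{>n+1}$ itself makes the uniform substituted condition for $(s_\alpha, q_\alpha)$ decide $\varphi$ whenever such a decision is possible. Take $\leq^*$-lower bounds at limit sub-stages and set $r^{n+1} = r^{n+1}_\kappa$, then $r_1$ is a $\leq^*$-lower bound of the $r^n$'s.

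To verify the desired property, given $p \leq r_1$ of length $m > \ell(r_1)$ deciding $\varphi$, let $\alpha + 1$ be the sub-stage of stage $m-1$ addressing $(\stem(p), q^p_{m-1})$. By construction the condition $\stem(p) \frown F_m^{r^{m-1}_{\alpha+1}}(x^p_{m-1}, \cdot) \frown \vec{F}^{r^{m-1}_{\alpha+1}}\upharpoonright_{>m}$ decides $\varphi$; since $r_1 \leq^* r^{m-1}_{\alpha+1}$, the condition $\stem(p) \frown F_m^{r_1}(x^p_{m-1}, \cdot) \frown \vec{F}^{r_1}\upharpoonright_{>m}$ is a pointwise refinement on a measure-one set and hence decides $\varphi$ as well. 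The main obstacle is the compatibility issue when iterating the previous claim: each sub-stage's refinement must land inside the already-chosen tail, so a naive application of the previous claim at each sub-stage does not suffice. This is handled by the relativized form described above, which forces the witnessing upper part to lie below the current target at the cost of a mild enrichment of the density case analysis.
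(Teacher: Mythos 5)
Your approach is genuinely different from the paper's, and it is considerably more complicated than it needs to be. The paper treats the previous claim as a black box rather than re-running its proof: for each stem $s$ extending $\stem(r_0)$ by at least one point, the previous claim already supplies (if a decision is possible at all) a single witnessing upper part $\vec{G}^s$ such that $s \frown F_{\ell(s)}^{r_0}(\top(s)) \frown \vec{G}^s$ decides $\varphi$. Since every such $\vec{G}^s$ represents a sequence in the generic $\vec{K}$, and $\vec{K}$ is $<\kappa_{\omega+2}$-closed (by the distributivity of $\Q^\omega$) while there are only $\kappa_{n-1} < \kappa_{\omega+2}$ stems of length $n$, the paper simply takes, for each $n$, a lower bound $\vec{G}^n$ of $\{\vec{G}^s : \ell(s)=n\}$, and then a lower bound $\vec{G}$ of the countably many $\vec{G}^n$, all inside $\vec{K}$. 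Restricting the $F$-parts of $r_0$ to $\vec{G}$ on the appropriate measure-one sets yields $r_1$; the verification is immediate because a direct extension of a deciding condition decides the same way.

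The compatibility worry you flag --- that a naive application of the previous claim could produce witnesses that do not fit inside a common tail --- is precisely what the closure of $\vec{K}$ handles, and it does so without any relativization: every upper part arising from the density arguments lands in $\vec{K}$, and $\vec{K}$ is a filter, so all the witnesses are pairwise compatible with lower bounds available by closure. Your proposed ``relativized $D_{s,q}$'' is therefore just $D_{s,q}$ again (any witnessing $\vec{F}^{++}$ can be refined below the current target since both represent members of $\vec{K}$), and the justification ``the same case analysis shows density'' silently rests on exactly this compatibility fact, which is where the real content lies and should be made explicit. The whole apparatus of inner iterations of length $\kappa$ through pairs $(s_\alpha,q_\alpha)$ with transfinite $\leq^*$-lower bounds is then unnecessary; you are essentially re-deriving the previous claim inside this one. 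Your final verification also has the index of the $F$-part off by one (it should be $F_m^{r_1}(\top(\stem(p)))$ for $p$ of length $m$, mirroring the $F_{\ell(p)}^{r_0}$ of the previous claim; the $F_{\ell(p)-1}$ in the statement is a typo in the paper). What the paper's route buys you is exactly that you never need to touch the internals of the previous proof again.
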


\begin{proof} We collect witnesses to the previous claim.  Let $s$ be a stem
which is at least a one point extension of $r_0$.  If possible we select an
upper part $\vec{G}^s$ witnessing that the condition with stem $s$ from the
previous claim decides $\varphi$.  Using the distributivity of
$\mathbb{Q}^\omega$ (in particular that the sequence of generics $\vec{K}$ is
closed), for each $n \geq \ell(r_0)+1$ we can find $\vec{G}^n$ such that for all
$k \geq n$ and all $s$ of length $n$, $[G^n_k] \leq [G_k^s]$.  It is straight
forward to find a sequence $\langle G_k \mid k \geq \ell(r_0)+1 \rangle$ such
that for all $n \geq \ell(r_0)+1$, $[G_k] \leq [G_k^n]$.

For each stem $s$ there is a sequence of measure one sets $\vec{A}^s$ on which
$\vec{G}\upharpoonright [\ell(s),\omega)$ is below $\vec{G}^s$.  We can assume
that the sequence $\vec{A}^s$ is contained (pointwise) in the sets which form
the domains of $\vec{G}^s$.  By a standard argument there is a sequence of
measure one sets $\langle A_n \mid n \geq \ell(r_0)+1 \rangle$ such that for all
$x \in A_n$ if $s$ is a stem with $s \prec x$, then $x \in A_n^s$.

We obtain $r_1$ by for all $n \geq \ell(r_0)+1$ restricting $F_n^{r_0}$ to
$G_n\upharpoonright A_n$.  It is straightforward to check that this condition
satisfies the claim. \end{proof}

\begin{claim} There is a condition $r_2 \leq^* r_1$ such that if $p \leq r_2$ is
at least a two point extension and it decides $\varphi$, then 
\[ \stem(p) \restrict \ell(p)-2 \frown
(F_{\ell(p)-2}^{r_2}(x_{\ell(p)-2}^p, x_{\ell(p)-1}^p), x_{\ell(p)-1}) \frown
( F_{\ell(p)) -1}^{r_2}(x_{\ell(p)-1}^q),\vec{F}^{r_2})\]
 decides $\varphi$.
\end{claim}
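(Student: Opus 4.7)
The plan is to mimic the two-step canonization of Claims 1 and 2, but one level deeper so as to handle two-point (rather than one-point) extensions. In the inner step I would prove an analog of Claim 1: there is an $r_2^0 \leq^* r_1$ such that whenever a two-point extension $p \leq r_2^0$ decides $\varphi$, some upper part $\vec{F}$ witnesses that the condition obtained by replacing the $q$ at position $\ell(p)-1$, the $f$-part, and the upper part by the induced canonical choices from $r_2^0$ already decides $\varphi$. In the outer step I would run the Claim 2 argument verbatim, using the distributivity of $\Q^\omega$ to merge these per-stem $\vec{F}$'s into a single upper part $\vec{F}^{r_2}$, yielding the desired $r_2 \leq^* r_2^0$.

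For the inner step, the right place to work is the ultrapower $M_n^{n+1} = \Ult(V[G*H], U_n \times U_{n+1})$ where $n = \ell(r_1)$. There the image of $r_1$ has canonical seeds $j_n``\kappa_n$ and $j_{n+1}``\kappa_{n+1}$ at positions $n$ and $n+1$, and the conditions to analyze are of the form $s \frown (q', j_{n+1}``\kappa_{n+1}) \frown (f^+, \vec{F}^+)$ below the lifted $r_1$. The stems $s$ ending at $j_n``\kappa_n$ are pointwise images of stems of length $n+1$ from $\R$, so by the counting argument already used in Claim 1 there are at most $\kappa_n$ many; for each such $s$ the relevant $q'$ lies in a $\Q$-forcing of small size in $M_n^{n+1}$. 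For each pair $(s, q')$, define $D_{s,q'}$ exactly as in Claim 1: the set of $f^+$'s such that if any $f^{++} \leq f^+$ admits an $\vec{F}^+$ deciding $\varphi$, then some $\vec{F}^{++}$ witnesses this already at $f^+$. Each $D_{s,q'}$ is dense open in $\Q$, and by the distributivity of $\Q$ in the relevant generic extension (Claim \ref{dist0} and Remark \ref{smalldist}) the intersection $D = \bigcap_{s,q'} D_{s,q'}$ is still dense open. Using the product-lemma genericity of $K_{n+2}$ over $V[G*H][\langle K_i \mid i > n+2\rangle]$, pick an $F_{n+2}$ with class in $D$; by Łoś' theorem applied to $U_n \times U_{n+1}$ there is a $U_n \times U_{n+1}$-measure-one set of $(x,y)$ on which the corresponding property holds, and refining $F_{n+1}^{r_1}$ and $F_{n+2}^{r_1}$ to this set produces $r_2^0$.

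The outer step then parallels Claim 2 exactly: for each two-point stem $s$ extending $\stem(r_2^0)$ select, if possible, an upper part $\vec{G}^s$ witnessing that the canonical two-point extension with stem $s$ decides $\varphi$; then use the closure (equivalently, distributivity) of $\Q^\omega$ to find a single $\vec{G}$ with $[G_k] \leq [G_k^s]$ on measure-one sets for every stem $s$ and every $k \geq \ell(r_2^0)+2$, and refine the upper part of $r_2^0$ accordingly to obtain $r_2$.

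The main obstacle is bookkeeping across the two new coordinates simultaneously, in particular ensuring that the counting of $(s, q')$-pairs stays strictly below the distributivity of $\Q$ so that the dense-open intersection survives. This is the reason for working inside $\Ult(V[G*H], U_n \times U_{n+1})$ rather than trying to separately ultrapower by $U_n$ and $U_{n+1}$: the factor embeddings from the ambient $j$ give $\crit(\hat k \circ k) > j(\kappa_1)$, so the internal $\Q$ really coincides (as noted in the corollary above) with the $\Q$ over which $\Q^\omega$-distributivity was established, and the small-size count of relevant stems and collapses genuinely makes the argument go through.
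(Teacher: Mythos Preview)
Your plan misfires on the central point of the claim. After Claim~2, $r_1$ already makes the $f$-part and the upper part canonical: whenever $p\le r_1$ decides $\varphi$, so does $\stem(p)\frown(F^{r_1}_{\ell(p)-1}(x^p_{\ell(p)-1}),\vec F^{r_1})$. What this claim must additionally achieve is canonicalizing the \emph{collapse condition between the last two Prikry points}, i.e.\ replacing $q^p_{\ell(p)-1}$ by the value $F^{r_2}_{\ell(p)-2}(x^p_{\ell(p)-2},x^p_{\ell(p)-1})$ of the two-variable function. Your inner step does not do this: your dense sets $D_{s,q'}$ are sets of $f^+$'s, indexed by pairs $(s,q')$ with $q'$ held fixed; you are re-canonicalizing the $f$-part while doing nothing to $q'$, so at the end your outer step merges upper parts but the $q'$ in $\stem(p)$ is still arbitrary. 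The paper's argument instead defines, for each short stem $s$ of length $n$, a dense set $D_s\subseteq\Q$ of \emph{collapse conditions} $q$ that are already decisive (either $r_{s,q}$, built with the canonical upper part inherited from $r^n$, decides $\varphi$, or no strengthening of $q$ makes it do so), and refines the two-variable $F_n$ so that its class lands in $\bigcap_s D_s$.

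This misidentification also breaks your counting. In your setup the stems $s$ end at the first seed $j_n``\kappa_n$, so the $q'$ you are fixing lies in $\Q(\kappa,j(\kappa))=\Q$, a poset of size far above $\kappa_{\omega+2}$; the intersection $\bigcap_{s,q'}D_{s,q'}$ then involves too many sets for the $<\kappa_{\omega+2}$-distributivity of $\Q$ to guarantee it is dense. (Your assertion that ``the relevant $q'$ lies in a $\Q$-forcing of small size'' is exactly what fails once the bottom of the pair is the seed rather than a ground-model point.) In the paper's argument the intersection is only over the $\kappa_{n-1}$ many stems $s$ of length $n$, well within the distributivity bound. Finally, you carry out the construction only at the single level $n=\ell(r_1)$; the paper runs an induction over all $n\ge\ell(r_1)$, refining $F_n$ at each stage and taking $r_2$ to be a lower bound, since the claim must apply to two-point extensions of every length.
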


\begin{proof}  We work by induction on the lengths of possible extensions of
$r_1$ of at least two points.  Let $r_1 = r^{\ell(r)}$ and assume that we have
constructed $r^n$ for some $n$.  We work in $M_n^{n+1}$ and consider conditions
of length $n+2$ of the form:
\[ s \frown  (j_n^{n+1}``\kappa_n, q,k_n^{n+1}``(j_n(\kappa_{n+1}))) \frown
(j_n^{n+1}(f^{r^n}),j_n^{n+1}(\vec{F}^{r^n}) \]
Note that $q \in \mathbb{Q}$ and $s$ is a stem of length $n$ from
$\mathbb{R}$, since $\top(s) \prec j_n^{n+1}``\kappa_n$.  It follows that there
are at most $\kappa_{n-1}$ many such pairs $s,q$.  We denote this condition
above $r_{s,q}$.

For each $s$
of length $n$, the set $D_s$
\begin{align*} \{ q \mid r_{s,q} \text{ decides } \varphi \text{ or for no extension }
q'\text{ of } q \text{ does } r_{s,q'} \text{  decide } \varphi \} \end{align*}
is dense in $\mathbb{Q}$ and defined in $V[G*H][\langle K_m \mid m > n
\rangle]$.  Using the distributivity of $\mathbb{Q}$ in this model, the set $D =
\bigcap_{ \ell(s) = n}D_s$ is dense in $\mathbb{Q}$.  So we can find a function
$F_n$ such that $[F_n] \in D$ with $F_n \leq F_n^{r^n}$.

% The following argument may need to be fixed to first apply Los in M_n and then
% take a diagonal intersection of measure one sets A_{n+1}^x for x in some
% measure one set A_n^s.
By Los' theorem the set
\begin{align*} \{ (x,y) \mid s \frown (x,F_n(x,y),y) \frown (f^{r^n},\vec{F}^{r^n}) \text{ decides } \varphi \\ \text{ or there is no extension } q \leq F_n(x,y) \text{ which
decides } \varphi \} \end{align*}
is measure one for $U_n\times U_{n+1}$.  We fix measure one sets
$A_n^s,A_{n+1}^s$ so that every $(x,y) \in A_n^s \times A_{n+1}^s$ with $x \prec
y$ is in the above set.  Again by a standard construction we can find $A_n
\times A_{n+1}$ such that for all pairs $(x,y) \in A_n\times A_{n+1}$ if $s
\prec x \prec y$, then $(x,y) \in A_n^s \times A_{n+1}^s$.  We refine $r^n$ to
$r^{n+1}$ by replacing $F_n^{r^n}$ with $F_n\upharpoonright A_n \times A_{n+1}$.

At the end of the construction we let $r_2$ be a lower bound for $r^n$ for $n
\geq \ell(r_1)$.  It is straightforward to check that $r_2$ has the desired
property. \end{proof}

For a stem $s$ of length $n \geq \ell(r_2)+1$, we partition $A_n^{r_2}$ in to
three sets
\begin{align*}
A_s^0 &= \{ x \mid s \frown (F_n^{r_2}(\top(s),x),x) \frown
(F_{n+1}^{r_2}(x),\vec{F}^{r_2}) \Vdash \varphi \} \\
A_s^1 &= \{ x \mid s \frown (F_n^{r_2}(\top(s),x),x) \frown
(F_{n+1}^{r_2}(x),\vec{F}^{r_2}) \Vdash \lnot\varphi \} \\
A_s^2 &= \{ x \mid s \frown (F_n^{r_2}(\top(s),x),x) \frown
(F_{n+1}^{r_2}(x),\vec{F}^{r_2}) \text{ does not decide } \varphi \}
\end{align*}

Let $A_s$ be the unique set above which is $U_n$-measure one. Let $r_3$ be
obtained by restricting the measure one sets of $r_2$ to the diagonal
intersections of the $A_s$.

Let $p^* \leq r_3$ be an extension of minimal length deciding $\varphi$.  We
assume without loss of generality that it forces $\varphi$.

\begin{claim} $\ell(p^*) \leq \ell(r_3) +1$ \end{claim}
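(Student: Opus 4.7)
My plan is to argue by contradiction. Suppose $\ell(p^*)\geq\ell(r_3)+2$, and set $n=\ell(p^*)-1$, $s^*=\stem(p^*)\restrict n$, and $y^*=x_n^{p^*}$. Since $p^*\leq r_3\leq^* r_2$, $p^*$ is at least a two-point extension of $r_2$, so the previous claim (together with $p^*\Vdash\varphi$) tells us that the canonical condition
\[ s^*\frown(F_n^{r_2}(\top(s^*),y^*),y^*)\frown(F_{n+1}^{r_2}(y^*),\vec{F}^{r_2}) \]
also forces $\varphi$, which is precisely the statement $y^*\in A_{s^*}^0$. On the other hand $y^*\in A_n^{p^*}\subseteq A_n^{r_3}$, and by the diagonal intersection defining $r_3$ any $y\in A_n^{r_3}$ with $\top(s^*)\prec y$ automatically lies in $A_{s^*}$. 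Since the three pieces $A_{s^*}^0,A_{s^*}^1,A_{s^*}^2$ partition $A_n^{r_2}$, I conclude $A_{s^*}=A_{s^*}^0$.

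Next I would produce $r''\leq r_3$ of length $n$ with stem $s^*$, with $f$-part $f_n^{r''}(y)=F_n^{r_3}(\top(s^*),y)$ on $A_n^{r_3}$, and with upper part $\vec{F}^{r_3}$ restricted to the diagonal measure-one sets carried by $r_3$. Showing $r''\Vdash\varphi$ finishes the proof, since $r''\leq r_3$ has length $n<\ell(p^*)$, contradicting the minimality of $p^*$. So suppose some $\bar p\leq r''$ forces $\lnot\varphi$. If $\ell(\bar p)=n$, then $\bar p$ is a direct extension of $r_3$ of length $n$ deciding $\varphi$, immediately contradicting minimality. If instead $\ell(\bar p)>n$, let $y=x_n^{\bar p}\in A_n^{r''}=A_n^{r_3}\subseteq A_{s^*}=A_{s^*}^0$; the canonical condition
\[ c_y:=s^*\frown(F_n^{r_2}(\top(s^*),y),y)\frown(F_{n+1}^{r_2}(y),\vec{F}^{r_2}) \]
then forces $\varphi$, and the plan is to verify that $\bar p\leq c_y$, which yields $\bar p\Vdash\varphi$ against our choice of $\bar p$.

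The only place where real work is required is the verification $\bar p\leq c_y$ in the second sub-case. This reduces to checking the seven clauses of the order on $\R$: the stems agree up to position $n-1$; at position $n$ both conditions have Prikry point $y$, and the $q$-coordinate of $\bar p$ is at most $F_n^{r_3}(\top(s^*),y)\leq F_n^{r_2}(\top(s^*),y)$; for positions above $n$ the extra Prikry points of $\bar p$ lie in $A_i^{r_3}\subseteq A_i^{r_2}$, and the $q$-values, the $f$-part, and the upper part of $\bar p$ are all pointwise below the $r_2$-data that goes into $c_y$. The common thread throughout is that $r_3\leq^* r_2$ makes every component of $r_3$ a pointwise refinement of $r_2$'s on the relevant measure-one sets, and once this routine check is recorded the contradiction is immediate.
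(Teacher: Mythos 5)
Your argument is correct and is essentially the same as the paper's, just with the details the paper compresses into a single ``hence'' spelled out. Both proofs derive that $A_{s^*}=A_{s^*}^0$ from the canonical cut-back of $p^*$ forcing $\varphi$ together with the diagonal intersection built into $r_3$, and both then conclude that a length-$n$ condition with stem $s^*$ and constraints inherited from $r_3$ forces $\varphi$, contradicting the minimality of $\ell(p^*)$. Where the paper simply asserts that $s^*\frown(F_n^{r_2}(\top(s^*)),\vec{F}^{r_2})$ forces $\varphi$, you make the standard Prikry-style density argument explicit: any $\bar p\leq r''$ of length $>n$ sits below the canonical one-point extension $c_{y}$ for $y=x_n^{\bar p}\in A_n^{r_3}\subseteq A_{s^*}=A_{s^*}^0$, so $\bar p$ forces $\varphi$, and the length-$n$ case is handled directly by minimality (and could in fact be folded into the other case by extending $\bar p$ first). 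You also correctly build $r''$ from the $r_3$-data rather than the $r_2$-data so that $r''\leq r_3$, which is what is actually needed for the contradiction with minimality; the paper is slightly loose on this point (the two agree since $r_3$ is obtained from $r_2$ only by shrinking measure-one sets). Two tiny wording slips: $\bar p$ of length $n$ is not a \emph{direct} extension of $r_3$ (since $n>\ell(r_3)$), only of $r''$; and the appeal to the $r_2$-claim requires noting that the canonical cut-back is $\geq p^*$, so it decides $\varphi$ the same way $p^*$ does. Neither affects correctness.
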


\begin{proof} Suppose not.  Then $p^*$ is at least a two point extension of
$r_3$.  Let $n = \ell(p^*)-1$ and $s$ be $\stem(p^*) \upharpoonright n$ and let
$x = x_{n}^{p^*}$.  From our previous
claims, the condition 
\[
s \frown (F_n^{r_2}(\top(s),x),x) \frown (F_{n+1}^{r_2}(x),\vec{F}^{r_2})
\]
forces $\varphi$.  It follows that $A_s = A_s^0$ and hence $s \frown
(F_n^{r_2}(\top(s)),\vec{F}^{r_2})$ forces $\varphi$, a contradiction to the
minimality of the length of $p^*$.
\end{proof}

\begin{claim} There is a direct extension of $r_3$ which decides $\varphi$. \end{claim}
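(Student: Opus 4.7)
The plan splits on the length of $p^*$. If $\ell(p^*) = \ell(r_3)$, then $p^* \leq^* r_3$ and $p^*$ itself serves as the required direct extension. So assume $\ell(p^*) = \ell(r_3) + 1$; I will derive a contradiction with the minimality of $\ell(p^*)$ by producing a direct extension of $r_3$ that decides $\varphi$. Set $n = \ell(r_3)$ and $x^* = x^{p^*}_n$. Applying the claim about $r_1$ (inherited by $r_3 \leq^* r_1$) to $p^*$, the canonical form $\stem(p^*) \frown F^{r_3}_{n+1}(x^*,\cdot) \frown \vec{F}^{r_3}_{\geq n+2}$ forces $\varphi$.

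The heart of the plan is a partition of $A^{r_3}_n$ into three sets $A^0, A^1, A^2$ according to whether the \emph{max-collapse} canonical one-point extension at $x$, namely
\[ \stem(r_3) \frown (f^{r_3}_n(x), x) \frown F^{r_3}_{n+1}(x,\cdot) \frown \vec{F}^{r_3}_{\geq n+2}, \]
forces $\varphi$, forces $\neg\varphi$, or does not decide $\varphi$. Since $U_n$ is an ultrafilter and the partition is $U_n$-measurable, exactly one of the three is $U_n$-measure one. In the cases $A^0$ or $A^1$, I restrict $A^{r_3}_n$ to that measure-one set to obtain $r^{**} \leq^* r_3$. A density argument in the spirit of the preceding claims then applies: every extension of $r^{**}$ of length at least $n+1$ passes through the max-collapse canonical one-point extension at its $n$-th Prikry point, which decides $\varphi$ in the known direction, so no extension of $r^{**}$ can force the opposite, whence $r^{**}$ itself decides. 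This is a direct extension of $r_3$ deciding $\varphi$, contradicting the minimality of $\ell(p^*)$.

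The main obstacle is the case in which $A^2$ is $U_n$-measure one: the max-collapse canonical one-point extension never decides even though $p^*$'s strictly stronger collapse $q^{p^*}_n \leq f^{r_3}_n(x^*)$ does. My plan is to descend one level using the two-point partition already established for $r_3$. For each $x \in A^2$ and each canonical stem $s$ of length $n+1$ through $x$, one of the classes $A_s^0, A_s^1, A_s^2$ of $A^{r_3}_{n+1}$ is $U_{n+1}$-measure one; the resulting assignment $x \mapsto \{0,1,2\}$ is $U_n$-measurable, so a single class dominates on a $U_n$-measure-one subset of $A^2$. A diagonal intersection on the measure-one sets at positions $n$ and $n+1$ then yields a direct extension of $r_3$ whose two-point extensions all decide $\varphi$ the same way, and the density argument again forces the direct extension itself to decide, contradicting minimality. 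The residual ``doubly undecided'' subcase, where even the canonical two-point extensions fail to decide on a measure-one product, is to be excluded by combining the $<\kappa_{\omega+2}$-distributivity of $\Q^\omega$ (Claim~\ref{dist0}) with the downward closure of the deciding sets inside the collapse posets: because $p^*$'s particular collapse already decides, distributivity permits a single ``generic'' collapse to be uniformized over a measure-one set of Prikry points, yielding the required direct extension.
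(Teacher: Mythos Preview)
Your split on $\ell(p^*)$ and your handling of the $A^0$/$A^1$ cases are fine. The gap is in the $A^2$ case. The two-point descent does not terminate: nothing rules out the doubly-undecided subcase, and iterating only pushes the problem to triply-undecided and beyond. Your final sentence gestures at the right idea (uniformize a stronger collapse over measure-one many Prikry points via distributivity) but never executes it, and it cites the wrong distributivity --- you invoke Claim~\ref{dist0} for $\Q^\omega$, whereas what is needed is the distributivity of the \emph{local} collapse poset $\Q(x^{r_3}_{\ell(r_3)-1},x)$ at each individual $x$, as in Claim~\ref{dist} and Remark~\ref{smalldist}.

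The paper's proof performs that uniformization as the \emph{first} step, before any partition. For each $x\in A^{r_3}_{\ell(r_3)}$, the poset $\Q(x^{r_3}_{\ell(r_3)-1},x)$ is distributive enough to intersect, over all stems $s$ of length $\ell(r_3)$ extending $\stem(r_3)$ (there are fewer than its distributivity of these), the dense sets of $q\leq f^{r_3}_{\ell(r_3)}(x)$ such that $s\frown(q,x)\frown(\text{upper part of }r_3)$ either decides $\varphi$ or no $q'\leq q$ makes it decide. Call the resulting condition $q_x$, replace $f^{r_3}_{\ell(r_3)}$ by $x\mapsto q_x$, and intersect the measure-one partition classes over all $s$ to get $r_4\leq^* r_3$. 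Now apply the previous claim to $r_4$: a one-point extension $p\leq r_4$ decides $\varphi$, and since $q^p_{\ell(r_3)}\leq q_x$ for $x=x^p_{\ell(r_3)}$, the ``no extension decides'' alternative is ruled out for $s=\stem(p)\restrict\ell(r_3)$; hence $r(s,x)$ already decides, the partition class $A_s$ is fixed, and restricting to it gives the direct extension of $r_3$ that decides $\varphi$.

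The underlying point you are missing is that the steps producing $r_0$ through $r_3$ discipline only the upper part and the two-variable $F$-functions; the one-variable $f$-part at level $\ell(r_3)$ has not yet been touched and must be strengthened directly, via the distributivity of the collapse poset at each $x$, before the partition argument can close.
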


\begin{proof} For each $x \in A_{\ell(r_3)}^{r_3}$, we use the distributivity of
$\Q(x_{\ell(r)-1}^r,x)$ to record a condition $q_x
\leq f_{\ell(r_3)}^{r_3}(x)$ such that for all relevant stems $s$ the condition
\[ r(s,x) = s \frown (q_x,x) \frown (F_{\ell(r_3)}^{r_3}(x),\vec{F}^{r_3}) \]
either decides $\varphi$ or no extension of $q_x$ in the above condition decides
$\varphi$.

For each stem $s$, there is a measure one set $A_s$ of $x$ which all give the
same decision above relative to $s$.  Let $A$ be the diagonal intersection of
the $A_s$.  Let $r_4$ be obtained from $r_3$ by restricting
$f_{\ell(r_3)}^{r_3}$ to the function $x \mapsto q_x$ on $A$.  Now by the
previous lemma there is a one point extension $p$ of $r_4$ which decides
$\varphi$.  Without loss of generality we assume it forces $\varphi$.  Let $s$
be the stem of $p$.  By the above construction, $r(s,x)$ forces $\varphi$ for
all $x \in A_s$.  It follows that the condition $s \frown
(f_{\ell(r_4)}^{r_4},\vec{F}^{r_3})$ decides $\varphi$. \end{proof}

This finishes the proof of the Prikry lemma.

\begin{corollary}\label{boundedsets} In the extension by $\mathbb{R}$, $\kappa =
\aleph_{\omega^2}$ and if $\kappa_n = x_n \cap \kappa$, then $\kappa_{n,i}$ is
preserved for all $i \leq \omega+3$. \end{corollary}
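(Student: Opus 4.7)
The plan is to establish two things: every $\kappa_{n,i}$ with $i \leq \omega+3$ survives in $V[G*H]^{\R}$, and a cardinal count then identifies $\kappa$ with $\aleph_{\omega^2}$. The substance is in the preservation; the count is bookkeeping built on Proposition \ref{productpreservation}.

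For the preservation I would fix $n < \omega$ and work below a condition $p \in \R$ of length $n+1$, so that $x_0,\dots,x_n$ and hence $\kappa_n = x_n \cap \kappa$ are determined. By thinning each measure one set $A_m^p$ with $m \geq n+1$ (the set of $x$ with $\kappa_x > \kappa_{n,\omega+3}$ is $U_m$-large), I may assume every value of the $f$- and $F$-parts of any $\leq^*$-extension of $p$ lies in a poset $\Q(\kappa_n,\kappa_x)$ or $\Q(\kappa_x,\kappa_y)$, each of which is $\kappa_{n,\omega+2}$-closed in $V$. Since intersecting $<\kappa$-many $U_m$-measure one sets remains measure one, $(\R/p, \leq^*)$ is $\kappa_{n,\omega+2}$-closed. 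Combined with the Prikry property (proved in the paper for $\bar{\R}$ and transferred to $\R$ via the length-preserving projection), this shows that any bounded subset of $\kappa_{n,\omega+2}$ in the $\R$-extension already lies in $V[G*H]$, which gives preservation of $\kappa_{n,i}$ for all $i \leq \omega+2$. For $i = \omega+3$, I would appeal to Proposition \ref{productpreservation} applied to the $\Q(\kappa_n,\kappa_{n+1})$-factor of $\R$, which preserves $\kappa_{n,\omega+3}$ and forces $\kappa_{n+1} = \kappa_{n,\omega+3}^+$, together with the observation that the remaining tail of $\R$ above the stem of length $n+2$ only involves posets $\Q(\kappa_m,\kappa_{m+1})$ for $m \geq n+1$, each of which is $\kappa_{n+1,\omega+2}$-closed and hence preserves everything up through $\kappa_{n,\omega+3}$. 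A clean factorization of $\R/p$ into this Mitchell piece and its high-closed tail, in the spirit of the outer-model trick of Proposition \ref{outermodel}, makes this rigorous.

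For the cardinal count, the initial collapse $\Coll(\omega, \kappa_{0,\omega})$ renders $\kappa_{0,\omega}$ (and hence $\kappa_0$) countable, and the component $\Q^0(\kappa_0,\kappa_1)$ of $\Q(\kappa_0,\kappa_1)$ collapses $(\kappa_{0,\omega+3}, \kappa_1)$, so the first four cardinals of the extension are $\aleph_0 = \omega$, $\aleph_1 = \kappa_{0,\omega+1}$, $\aleph_2 = \kappa_{0,\omega+2}$, $\aleph_3 = \kappa_{0,\omega+3}$. For each $n \geq 1$, Proposition \ref{productpreservation} applied to $\Q(\kappa_{n-1},\kappa_n)$ makes $\kappa_{n,1} = \kappa_n^+$ (via its $\Q^1$-factor collapsing $(\kappa_n,\kappa_{n,1})$), and applied to $\Q(\kappa_n,\kappa_{n+1})$ makes $\kappa_{n+1} = \kappa_{n,\omega+3}^+$, while the higher levels $\kappa_{n,i+1} = \kappa_{n,i}^+$ for $1 \leq i \leq \omega+2$ already from $V[G*H]$. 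So the interval $[\kappa_n, \kappa_{n+1})$ contributes exactly the $\omega+4$ cardinals $\kappa_{n,0}, \kappa_{n,1}, \dots, \kappa_{n,\omega+3}$. Ordinal arithmetic $4 + (\omega+4)\cdot\omega = \omega^2$ then gives $\kappa = \sup_n \kappa_n = \aleph_{\omega^2}$.

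The main obstacle is the preservation of $\kappa_{n,\omega+3}$ itself: $\kappa_{n,\omega+2}$-closure of $\leq^*$ on $\R/p$ does not by itself suffice, since a $\mu$-closed poset can collapse $\mu^+$ (witness $\Coll(\mu,\mu^+)$), so the argument must exploit the specific Mitchell-style structure of $\Q(\kappa_n,\kappa_{n+1})$ together with the high closure of the tail above it, in the spirit of the decompositions used in the proofs of Claim \ref{qomegapres} and Proposition \ref{productpreservation}.
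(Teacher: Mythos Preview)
Your preservation argument has a genuine gap. The assertion that $(\R/p,\leq^*)$ is $\kappa_{n,\omega+2}$-closed is false: under $\leq^*$ one may still extend the stem coordinates $q_0,\dots,q_n$, and these live in the posets $\Q(x_{i-1},x_i)$ for $i\leq n$, the first of which is only $\kappa_{0,\omega+2}$-closed in $V$. The closure you establish for the $f$- and $F$-parts says nothing about the stem factor. Consequently your conclusion that every bounded subset of $\kappa_{n,\omega+2}$ in $V[G*H]^{\R}$ already lies in $V[G*H]$ is simply wrong---the generics $Q_1,\dots,Q_n$ are genuinely new bounded subsets.

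The paper's route is close to yours but gets this right. One shows that for any $\mu<\kappa$ and any condition $r$ with $\mu<\kappa\cap\top(\stem(r))$, every subset of $\mu$ in the $\R$-extension already lies in $V[G*H][\prod_{i<\ell(r)}Q_i]$. The Prikry lemma places the set in the extension by $(\R/r,\leq^*)$; this forcing factors as $\prod_{i<\ell(r)}\Q(x_{i-1}^r,x_i^r)$ times the forcing of upper parts under direct extension, and the latter is shown (by an argument in the style of Claim~\ref{dist}, using closure in $V$ plus an Easton-type outer-model trick) to be sufficiently distributive over the former, hence adds nothing new of size $\mu$. Preservation of each $\kappa_{n,i}$---including $i=\omega+3$---then follows uniformly from Remark~\ref{smalldist} and Proposition~\ref{productpreservation} applied inductively along the finite product, so no separate case for $i=\omega+3$ is needed. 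Your cardinal count for $\kappa=\aleph_{\omega^2}$ is fine once preservation is in place.
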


\begin{proof} By Remark \ref{smalldist} and Proposition
\ref{productpreservation}, it is enough to show that if $\dot{X}$ is an
$\mathbb{R}$-name for a subset of some $\mu < \kappa$ and $r$ is a condition
with $ \mu < \lambda = \kappa \cap \top(\stem(r))$, then $r$ forces $\dot{X}$ is
in the extension by $\prod_{i<\ell(r)} \mathbb{Q}(x_{i-1}^r,x_i^r)$.

By the Prikry Lemma, $\dot{X}$ is in any extension by $(\mathbb{R}, \leq^*)$
below the condition $r$.  This forcing decomposes as the product of
$\prod_{i<\ell(r)} \mathbb{Q}(x_{i-1}^r,x_i^r)$ and the forcing of upper parts
ordered by direct extension.  By an argument similar to the one from Claim
\ref{dist}, the forcing of upper parts ordered by direct extension is $< \kappa
\cap x_{\ell(r)}^r$-distributive in the extension by $\prod_{i<\ell(r)}
\mathbb{Q}(x_{i-1}^r,x_i^r)$.  So we have the desired conclusion. \end{proof}

Recall that $\Q^\omega$ is the full support product.  Clearly both $\mathbb{R}$
and $\Q^\omega$ project to $\Q^\omega/fin$.  Let $I$ be the
$\mathbb{Q}^\omega/fin$-generic induced by $\vec{K}$.

\begin{claim} $\R/I$ has the $\kappa_{\omega+1}$-Knaster property. \end{claim}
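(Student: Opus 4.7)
The plan is to use the $\kappa_\omega$-centeredness of $\bar{\R}$ below conditions of length at least one (established in the earlier claim), combined with a density argument identifying $\bar{\R}$ as dense in $\R/I$ within $V[G*H][\vec K]$.

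First, since conditions of length at least $1$ are dense in $\R$ and hence in $\R/I$, I may assume each of the given $\kappa_{\omega+1}$ conditions has length at least $1$.

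Second, I would argue that $\bar{\R}$ is dense in $\R/I$ below conditions of length at least $1$. Given $r \in \R/I$, the hypothesis that the projection $\pi(r)$ lies in $I$ unpacks to the existence of some $\vec q \in \vec K$ with $[F_i^r] = q_i$ for cofinitely many $i$. At each of the cofinitely many good coordinates one strengthens $F_i^r$ within $\Q$ to a representative of an element of $K_i$ below $[F_i^r]$; at each of the finitely many bad coordinates one modifies $F_i^r$ directly to land in $K_i$, using the compatibility witnessed by $\vec q$. The resulting $r' \leq r$ lies in $\bar{\R}$.

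Third, with density established, the $\kappa_{\omega+1}$-Knaster property of $\R/I$ reduces to the same property for $\bar{\R}$ below length $\geq 1$. Since $\bar{\R}$ there is $\kappa_\omega$-centered with the centered pieces indexed by pairs (stem, $[f_n]$-class), of which there are at most $\kappa_\omega$ in total, among any $\kappa_{\omega+1}$ conditions pigeonhole places $\kappa_{\omega+1}$ of them into a single centered piece. These conditions share a common stem and $f$-part class, while their upper parts at every coordinate lie in the generic filter $K_i$, so they are pairwise compatible in $\bar{\R}$, and hence in $\R$ and in $\R/I$.

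The main obstacle is the density step, specifically handling the finitely many bad coordinates where $[F_i^r]$ need not itself lie in $K_i$. The justification relies on a careful reading of the definition of $\R/I$ via the induced filter $I$, ensuring that the upper parts of $r$ are compatible with $\vec K$ at every coordinate up to finite-coordinate modification, so that the bad-coordinate replacements can be carried out without leaving $\R/I$.
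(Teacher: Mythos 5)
Your proposal takes a genuinely different route from the paper. The paper works directly in $V[G*H][I]$: it fixes a condition $\langle [F_i]\rangle/fin$ in $I$ that eventually bounds the upper parts of the $r_\alpha$, uses distributivity of $\Q^\omega/fin$ and several rounds of pigeonholing to stabilize lengths, the bounding index $n^*$, and the $x_i$'s, and then observes that $r_\alpha\restrict l+1$ ranges over a poset of size $<\kappa_\omega$. You instead want to quote the earlier claim that $\bar\R$ is $\kappa_\omega$-centered below length-one conditions, and push everything through a density argument. This is a cleaner, more modular strategy, and it can be made to work, but as written it has two soft spots.

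First, the handling of the bad coordinates does not work as stated. If $[F_i^r]\notin K_i$, then since $K_i$ is a generic (hence maximal) filter there is some $q'_i\in K_i$ incompatible with $[F_i^r]$; consequently no strengthening $F'_i\leq F_i^r$ can satisfy $[F'_i]\in K_i$. So ``modifying $F_i^r$ directly to land in $K_i$'' is impossible at such a coordinate. The correct repair is to extend the condition in length past the finitely many bad coordinates (absorbing them into the stem), which is permitted because $\pi(r)\in I$ only controls cofinitely many $i$, and then to strengthen each remaining $F_i^{r'}$ to represent $q_i\in K_i$, using $q_i\leq[F_i^{r'}]$. With that repair, $\bar\R$ is indeed dense in $\R/I$ below length-$\geq 1$ conditions, and one should also note explicitly that $\bar\R\subseteq\R/I$ (which holds because for a full-support product $\Q^\omega$, a ground-model sequence $\vec q$ with each $q_n\in K_n$ lies in $\vec K$).

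Second, there is a model mismatch: $\bar\R$ is defined only in $V[G*H][\vec K]$, while the paper's claim is established in $V[G*H][I]$, the model over which $\R/I$ is actually forced. Your argument therefore yields Knasterness in $V[G*H][\vec K]$ rather than in $V[G*H][I]$. This is harmless for the downstream use in the corollary (preservation of $\kappa_{\omega+1},\kappa_{\omega+2},\kappa_{\omega+3}$), since $\kappa_{\omega+1}$-cc is downward absolute to inner models with the same $\kappa_{\omega+1}$; but it does not literally reproduce the paper's claim as stated, and this caveat should be noted.
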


\begin{proof} Work in $V[G*H][I]$ and let $\langle r_\alpha \mid \alpha <
\kappa_{\omega+1} \rangle$ be a sequence of elements of $\mathbb{R}/I$.  We can
assume that each $r_\alpha$ has some fixed length $l$.

Let $\langle [F_i] \mid i < \omega \rangle/fin$ be a condition in
$I$ forcing this.  By the distributivity of $\mathbb{Q}^\omega/fin$ we can
assume that the $F_i$ have the property that for each $\alpha$ there is an
$n_\alpha$ such that for all $i \geq n_\alpha$, $[F_i] \leq [F_i^{r_\alpha}]$.
By passing to an unbounded subset of the $r_\alpha$, we can assume there is
$n^*$ such that $n^*=n_\alpha$ for all $\alpha < \kappa_{\omega+1}$.  Further,
extending each $r_\alpha$ if necessary we can assume that $l= \ell(r_\alpha)
\geq n^*$.  By passing to a further unbounded subset, we can assume that for all
$i < l$, $x_i^{r_\alpha} = x_i^{r_\beta}$ for all $\alpha$ and $\beta$.

Now for each $\alpha$, $r_\alpha \upharpoonright l+1$ essentially comes from the
poset $\prod_{i<l} \mathbb{Q}(x_{i-1}^r,x_i^r) \times \Q(x_{l-1},j_l``\kappa_l)$
where the latter forcing is computed in $M_l$.  This forcing has cardinality
less than $\kappa_{\omega}$ and hence we can find an unbounded set of $\alpha$
on which any $r_\alpha$ and $r_\beta$ are compatible. \end{proof}

\begin{corollary} $\mathbb{R}$ preserves the cardinals $\kappa_{\omega+1},
\kappa_{\omega+2}$ and $\kappa_{\omega+3}$.  \end{corollary}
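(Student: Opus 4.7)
The plan is to factor $\R$ through $\Q^\omega/fin$. Since $\R$ projects to $\Q^\omega/fin$, forcing with $\R$ over $V[G*H]$ decomposes as first forcing with $\Q^\omega/fin$ to obtain a generic $I$, and then forcing with the quotient $\R/I$ over $V[G*H][I]$. It thus suffices to show separately that $\Q^\omega/fin$ preserves the three cardinals $\kappa_{\omega+1}, \kappa_{\omega+2}, \kappa_{\omega+3}$ in $V[G*H]$, and that $\R/I$ preserves them in $V[G*H][I]$.

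For the first half, I would observe that $\Q^\omega$ itself also projects to $\Q^\omega/fin$, so any $\Q^\omega/fin$-generic $I$ over $V[G*H]$ can be extended by further forcing to a full $\Q^\omega$-generic $\vec K$ whose induced $\Q^\omega/fin$-generic is $I$. By Claim \ref{qomegapres}, $\Q^\omega$ preserves cardinals up to $\kappa_{\omega+3}$, so $\kappa_{\omega+1}, \kappa_{\omega+2}, \kappa_{\omega+3}$ remain cardinals in $V[G*H][\vec K]$; since $V[G*H][I] \subseteq V[G*H][\vec K]$, they remain cardinals in the intermediate model as well.

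For the second half, the preceding claim establishes that $\R/I$ is $\kappa_{\omega+1}$-Knaster in $V[G*H][I]$, and in particular $\kappa_{\omega+1}$-cc. The standard fact that $\lambda$-cc forcings preserve all cardinals and cofinalities $\geq \lambda$ then gives that forcing with $\R/I$ over $V[G*H][I]$ preserves $\kappa_{\omega+1}, \kappa_{\omega+2}, \kappa_{\omega+3}$. Combining the two halves yields the desired conclusion. The only subtle point is the verification that $\kappa_{\omega+1}$ as computed in $V[G*H][I]$ agrees with its ground-model value, so that the Knaster/cc statement can be applied — but this is exactly what the first half supplies. Otherwise the argument is pure bookkeeping on top of the preceding cc and distributivity results.
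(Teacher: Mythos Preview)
Your proposal is correct and follows essentially the same approach as the paper: factor through $\Q^\omega/fin$, use Claim~\ref{qomegapres} together with the fact that $V[G*H][I]$ sits inside a $\Q^\omega$-extension to preserve the three cardinals in $V[G*H][I]$, and then invoke the $\kappa_{\omega+1}$-Knaster property of $\R/I$ to preserve them in the full $\R$-extension. You are slightly more explicit than the paper about the intermediate-model and chain-condition bookkeeping, but the argument is the same.
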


By Claim \ref{qomegapres} all three cardinals are preserved in $V[I]$ which is
an inner model of an extension by $\Q^\omega$ and by the previous Claim they are
preserved in the extension by $\mathbb{R}$.

\section{A schematic view of arguments for the failure of approachability at
double successors} \label{schematic}

In this section we give an abstract overview of arguments for the failure the
approachability property at double successor cardinals.   Before we begin, note
that none of the cardinals and posets in this section bear any relation to those
defined elsewhere in the paper.

We begin remarking that the approachability property is upwards
absolute to models with the same cardinals.  So to prove that it fails in some model, it is enough to show that it fails
in an outer model with the same cardinals.  We formalize this in the following
remark.

\begin{remark} \label{downwardsabsolute} Suppose that $W \subseteq W'$ are
models of set theory and $\lambda$ is a regular cardinal in $W'$.  If $\lambda
\notin I[\lambda]$ in $W'$, then $\lambda \notin I[\lambda]$ in $W$.
\end{remark}

%Further the objects witnessing $\lambda \in I[\lambda]$ just have size
%$\lambda$, so we have the following remark.

%\begin{remark}\label{restrict} If $\mathbb{P}$ is a $\lambda$-cc forcing to add
%Cohen subsets to some $\bar{\lambda}<\lambda$ and $\mathbb{P}$ forces that
%$\lambda \in I[\lambda]$, then there is a subforcing of $\P$ of size $\lambda$
%which forces $\lambda \in I[\lambda]$. \end{remark}

We will also use a theorem of Gitik and Krueger \cite{GK} which allows us to
preserve the failure of approachability in some outer models.

\begin{theorem} \label{GK} Suppose that $\lambda = \mu^{++}$ and $\mathbb{P}$ is
$\mu$-centered.  If $\lambda \notin I[\lambda]$ in $V$, then it is forced by
$\mathbb{P}$ that $\lambda \notin I[\lambda]$. \end{theorem}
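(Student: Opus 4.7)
The plan is to argue by contradiction. Assume $\lambda = \mu^{++} \notin I[\lambda]$ in $V$, and suppose toward a contradiction that a condition $p_0 \in \mathbb{P}$ forces $\dot{\vec{a}} = \langle \dot{a}_\alpha : \alpha < \lambda \rangle$ together with some club $\dot{C}$ to witness $\lambda \in I[\lambda]$ in the extension, with $\dot{S}$ the associated stationary set of approachable points. Since $\mu$-centered implies the $\mu^+$-chain condition, I can replace $\dot{C}$ by a ground-model club $C' \subseteq \dot{C}$ and, for each $\alpha < \lambda$, find a bound $\rho_\alpha < \lambda$ in $V$ such that $p_0 \Vdash \dot{a}_\alpha \subseteq \rho_\alpha$.

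Fix a decomposition $\mathbb{P} = \bigcup_{i<\mu} P_i$ into centered pieces and define in $V$, for each $(\alpha, i) \in \lambda \times \mu$,
$$b_{\alpha, i} = \{\beta < \rho_\alpha : \exists p \in P_i,\ p \leq p_0,\ p \Vdash \beta \in \dot{a}_\alpha \}.$$
Reindexing by a bijection $\lambda \times \mu \to \lambda$ in $V$ produces a ground-model sequence $\vec{b}$ of length $\lambda$ consisting of bounded subsets of $\lambda$. Centeredness prevents any two conditions of $P_i$ from forcing contradictory membership for a given $\beta$, so the $P_i$-shadow $b_{\alpha, i}$ records consistently what $P_i$ forces into $\dot{a}_\alpha$. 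In the extension one has $a_\alpha \subseteq \bigcup_{i<\mu} b_{\alpha, i}$ for every $\alpha$.

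The heart of the argument is to show in $V$ that stationarily many $\gamma \in C'$ are approachable via $\vec{b}$. Given $\gamma \in \dot{S}$ with witness $\dot{A}_\gamma$ and indexing function $\dot{\phi}_\gamma$ satisfying $\dot{A}_\gamma \cap \beta = \dot{a}_{\dot{\phi}_\gamma(\beta)}$ for each $\beta < \gamma$, pick for each $\beta$ a condition $p_\beta$ in the generic filter deciding these facts, and color $\beta$ by the index $i(\beta)$ with $p_\beta \in P_{i(\beta)}$. Pigeonholing on $\mu$, there is a cofinal subset of $\beta$'s sharing a common index $i$; because the generic is a filter, the chosen conditions are mutually compatible, and centeredness of $P_i$ allows one to amalgamate their decisions so that each $\dot{a}_{\phi_\gamma(\beta)}$ is pinned down to coincide with $b_{\phi_\gamma(\beta), i}$ along that cofinal set. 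Gluing these initial segments produces a ground-model cofinal subset $A' \subseteq \gamma$ of the correct ordertype whose proper initial segments all lie in $\vec{b}$ below $\gamma$, witnessing approachability of $\gamma$ in $V$ and contradicting $\lambda \notin I[\lambda]$.

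The principal obstacle is the amalgamation step: in general $a_\alpha \subsetneq \bigcup_i b_{\alpha, i}$, so one cannot naively replace $a_{\phi_\gamma(\beta)}$ by some $b_{\phi_\gamma(\beta), i}$. It is precisely the centeredness of $P_i$, rather than $\mu^+$-cc alone, that allows the deciding conditions across many $\beta$ to force $b_{\phi_\gamma(\beta), i} = a_{\phi_\gamma(\beta)}$ and makes the resulting sequence of initial segments extractable in $V$ rather than merely in $V[G]$. Additional care is needed when $\cf^V(\gamma) \leq \mu$, since cofinalities below $\mu^+$ can change in the extension; one handles this by working with $\cf^{V[G]}(\gamma)$ and exploiting $\mu^+$-cc to ferry stationarity back and forth between $V$ and $V[G]$.
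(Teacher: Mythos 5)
The paper does not prove this statement; it is quoted from Gitik and Krueger \cite{GK}, so there is no in-paper proof to compare against, and your attempt must stand on its own. Your framework is the right one for this result: pass to ground-model shadows $b_{\alpha,i}$ of the names $\dot a_\alpha$ along each centered piece $P_i$, and pigeonhole on the $\mu$ pieces at points of cofinality $\mu^+$. But the pivotal amalgamation step is a genuine gap, and as stated it is false.

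You claim that, along the pigeonholed cofinal set, centeredness lets the chosen conditions $p_\beta \in P_i \cap G$ ``pin down $\dot a_{\phi_\gamma(\beta)}$ to coincide with $b_{\phi_\gamma(\beta),i}$.'' This is not so. Each $p_\beta$ only decides the index $\phi_\gamma(\beta)$ and the identity $\dot A_\gamma \cap \beta = \dot a_{\phi_\gamma(\beta)}$; a single condition in a $\mu^+$-cc forcing cannot decide the full contents of a name for a subset of an ordinal of size up to $\mu^+$. Moreover $b_{\alpha,i}$ aggregates what \emph{every} condition of $P_i$, including many outside $G$, forces into $\dot a_\alpha$, while $a_\alpha$ records only what $G$ realizes (and the witnessing conditions in $G$ need not lie in $P_i$); in general neither set contains the other. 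What centeredness of $P_i$ actually buys you is the disjointness of the positive and negative $P_i$-decisions about each $\dot a_\alpha$ (two members of $P_i$ cannot force a $\xi$ both in and out of $\dot a_\alpha$), together with the ability to amalgamate finitely many members of $P_i$ in $\mathbb P$ and thereby transfer positive decisions across the indices $\alpha_\eta$. That coherence, not the false identity $b_{\alpha,i}=a_\alpha$, is what a correct proof must exploit; you also tacitly assume nestedness of the $b_{\phi_\gamma(\beta),i}$ when ``gluing,'' and that is exactly what has to be earned. A second, smaller gap: you only aim for ``stationarily many'' $\gamma \in C'$ approachable in $V$, but a stationary approachable set does not contradict $\lambda \notin I[\lambda]$; one needs a club, and the points of cofinality $\le\mu$ require a separate (Shelah-style) argument that your final sentence about ``ferrying stationarity'' does not supply.
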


The bulk of this section is devoted to giving an abstract view of the failure of
approachability in the extension by the Mitchell posets as described in Section
\ref{preliminaries}.  In particular we need to show that these posets have
approximation properties.

\begin{definition} [Hamkins] Let $\P$ be a poset and
$\kappa$ be a cardinal.  We say that $\mathbb{P}$ has the
$\kappa$\emph{-approximation property} if for every ordinal $\mu$ and every name
$\dot{x}$ for a subset of $\mu$, if for all $z \in
\mathcal{P}_\kappa(\mu)$ $\Vdash_\P \dot{x} \cap z \in V$, then $\Vdash_\P \dot{x} \in
V$. \end{definition}

We consider the following general situation.  Let $\rho < \sigma < \tau$ be
regular cardinals with $\tau$ Mahlo and let $\M = \M(\rho,\sigma,\tau)$ as
defined in Section \ref{preliminaries}.  Let $\X$ be a poset such that for all
$\alpha \leq \tau$, $\X$ is $\alpha$-cc in the extension by $\M\restrict
\alpha$.  (Here $\M \restrict \tau = \M$.)  Suppose that $\langle \dot{a}_\gamma
\mid \gamma < \tau \rangle$ and $\dot{C}$ are $\M \times \X$-names for witnesses
that $\tau \in I[\tau]$.  

We assume that there are a club $D$ and subforcings $\X \restrict \alpha$ of
$\X$ for $\alpha \in D \cup \{\tau\}$ such that for all $\alpha \in D \cup
\{\tau\}$, $\langle \dot{a}_\gamma \mid \gamma < \alpha \rangle$ is in the
extension by $\M \restrict \alpha \times \X \restrict \alpha$.  By the $\tau$-cc
of $\M \times \X$ we can assume that $\dot{C}$ is in $V$, so we rename it $C$.
Since $\tau$ is Mahlo, there is an inaccessible $\alpha$ in $D \cap C$.

It follows that $\alpha = \sigma^+$ in the extension by $\M \upharpoonright
\alpha \times \X \restrict \alpha$ and $\alpha \in I[\alpha]$ as witnessed by
$\langle a_\gamma \mid \gamma < \alpha \rangle$ and $C \cap \alpha$.  Since
$\alpha$ is approachable in the extension by $\M \times \X$, there is an
$\M/\M\restrict \alpha \times \X/\X\restrict \alpha$-name
$\dot{A}$ for a subset of $\alpha$ of ordertype $\sigma$ such that for all
$\delta < \alpha$, $\dot{A} \cap \delta = a_\gamma$ for some $\gamma<\alpha$.

Since forcing with $\X$ over the extension by $\M \restrict \alpha$ preserves
$\alpha$, for the failure of approachability at $\tau$ it is enough to show that
$\M/\M\restrict \alpha$ has the $\lambda$-approximation property in the
extension by $\X \times \M \restrict \alpha$ for some $\lambda \leq \sigma$.

In the arguments for the failure of approachability below the choice of
$\mathbb{X}$ will vary, but we can prove a single lemma which captures most of
the different choices.  We say that a poset $\X$ preserves the $\lambda$-cc of a
poset $\P$ if $\P$ is $\lambda$-cc in the extension by $\X$.

\begin{lemma} \label{formerapproximation} Let $\rho < \sigma < \tau$ be
cardinals and let $\M = \M(\rho,\sigma,\tau)$.  Let $\lambda \leq \sigma$ be a
cardinal and let $\X$ be a poset such that 
\begin{enumerate}
\item for all $\alpha \leq \tau$, $\X$ is $\alpha$-cc in the extension by
$\M\restrict \alpha$ and
\item $\X \simeq \bar{\X} \times \hat{\X}$ where $\bar{\X}^2$ is $\lambda$-cc
and $\hat{\X}$ is $<\lambda$-distributive and preserves the $\lambda$-cc of $\Add(\rho,\tau)$ and $\bar{\X}$.
\end{enumerate}
Then in the extension by $\M\upharpoonright \alpha \times \X$, $\M/\M\restrict
\alpha$ has the $\lambda$-approximation property.
\end{lemma}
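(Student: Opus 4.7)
The strategy is to produce an outer model of $V[\M\restrict\alpha \times \X]^{\M/\M\restrict\alpha}$ which is a generic extension of $V[\M\restrict\alpha \times \X]$ by a product of a $\lambda$-cc factor (whose square is still $\lambda$-cc) and a $<\lambda$-closed factor, and then invoke the standard Mitchell/Hamkins lemma that any such product has the $\lambda$-approximation property. Since the quotient forcing $\M/\M\restrict\alpha$ embeds into this product extension, any $\M/\M\restrict\alpha$-name for a $\lambda$-approximated subset of an ordinal is also $\lambda$-approximated as a name in the larger product extension, and so the subset lies in $V[\M\restrict\alpha\times\X]$; this yields the $\lambda$-approximation property for $\M/\M\restrict\alpha$ itself.

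For the decomposition, I apply the term-forcing trick recalled in Section \ref{preliminaries} to $\M = \Add(\rho,\tau) * \C^+$: an outer model of $V^\M$ is obtained by forcing with $\Add(\rho,\tau) \times \mathcal{T}$, where $\mathcal{T}$ is the term poset for $\dot{\C}(\Add(\rho,\tau),\sigma,\tau)$, which lives in $V$ and is $\sigma$-closed there. Splitting the Cohen factor as $\Add(\rho,\alpha)\times\Add(\rho,\tau\setminus\alpha)$ and the term poset as $\mathcal{T}_{<\alpha}\times\mathcal{T}_{\geq\alpha}$, one identifies, in an outer model, the quotient $\M/\M\restrict\alpha$ with $\Add(\rho,\tau\setminus\alpha)\times\mathcal{T}_{\geq\alpha}$, where $\mathcal{T}_{\geq\alpha}$ is still $\sigma$-closed in $V$ and $\Add(\rho,\tau\setminus\alpha)$ is $\lambda$-cc in $V$. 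Absorbing $\bar{\X}$ into the Cohen piece, the cc factor becomes $\Add(\rho,\tau\setminus\alpha)\times\bar{\X}$ and the closed factor is $\mathcal{T}_{\geq\alpha}$.

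Over the actual base $V[\M\restrict\alpha\times\X]$ one must check that both properties persist. For the cc factor, I will use the hypothesis that $\hat{\X}$ preserves the $\lambda$-cc of both $\Add(\rho,\tau)$ and $\bar{\X}$, together with the fact that $\M\restrict\alpha$ is $\alpha$-cc (hence $\lambda$-cc, since $\lambda\leq\sigma<\alpha$) and, again via the term-forcing trick, factors through a product of a $\lambda$-cc poset with a $<\lambda$-closed poset; chaining these preservation facts with Easton's lemma yields $\lambda$-cc of $\Add(\rho,\tau\setminus\alpha)\times\bar{\X}$, and indeed of its square, over the base. For the closed factor, $<\lambda$-distributivity of $\hat{\X}$ combined with the $\lambda$-cc of every remaining piece ensures via Easton's lemma that no new $<\lambda$-sequences of elements of $\mathcal{T}_{\geq\alpha}$ are added, so it remains $<\lambda$-closed over the base. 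With the product decomposition and its properties verified over $V[\M\restrict\alpha\times\X]$, the usual argument (build a decreasing sequence in the closed factor that settles the approximation on ever-larger pieces, then use the $\lambda$-cc of the square of the cc factor to rule out branching of putative disagreements) gives $\lambda$-approximation for the product, and hence, by the opening paragraph, for $\M/\M\restrict\alpha$.

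The delicate point is exactly this preservation of $\lambda$-cc of the Cohen tail after forcing with $\M\restrict\alpha\times\X$. The hypothesis that $\X$ splits as $\bar{\X}\times\hat{\X}$ with $\hat{\X}$ distributive and preserving the $\lambda$-cc of both $\Add(\rho,\tau)$ and $\bar{\X}$ is engineered precisely so that the requisite product-lemma bookkeeping (commuting $\hat{\X}$ past the cc pieces, then absorbing $\M\restrict\alpha$) can be pushed through; everything else in the argument is standard Mitchell-forcing machinery.
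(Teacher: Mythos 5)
The paper's own ``proof'' of this lemma is a citation: it invokes Lemma~4.1 of \cite{su4}, explains that the Prikry factor there is taken trivial, that one works in $V[\hat{\X}]$, and that $\M\times\bar{\X}$ plays the role of $\M\times\A$ (with the observation that only the chain condition of $\A$ was used). Your write-up tries to reconstruct the cited argument from scratch, and in broad strokes the ingredients you list are the right ones: the projection of $\M$ from $\Add(\rho,\tau)\times\mathcal{T}$, the split at $\alpha$, and a Mitchell/Hamkins-style tree argument balancing a chain-condition bound against a closure/distributivity bound. To that extent you are following the same route the paper points to.

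There is, however, a genuine gap in the reconstruction. You claim, via ``Easton's lemma,'' that no new $<\lambda$-sequences of elements of $\mathcal{T}_{\geq\alpha}$ are added over the base, so that $\mathcal{T}_{\geq\alpha}$ remains $<\lambda$-closed in $V[\M\restrict\alpha\times\X]$. This is false as stated: $\M\restrict\alpha$ contains $\Add(\rho,\alpha)$, which adds new $\rho$-sequences, and since $\rho<\lambda$ in every intended application this produces new $<\lambda$-sequences of members of $\mathcal{T}_{\geq\alpha}$. Easton's lemma only yields that a $\lambda$-closed poset becomes $\lambda$-\emph{distributive} after $\lambda$-cc forcing, and $\lambda$-distributivity is not sufficient for the ``build a decreasing $\lambda$-sequence in the closed factor'' step of the product approximation argument. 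Consequently the headline strategy of your first paragraph --- find a $\lambda$-cc $\times$ $<\lambda$-closed product decomposition of an outer model \emph{over} $V[\M\restrict\alpha\times\X]$ and quote the off-the-shelf product approximation lemma --- does not go through. The related move of ``absorbing $\bar{\X}$ into the Cohen piece'' also does not parse: $\bar{\X}$ lives in the base model, not in the quotient forcing, so it is not a factor one forces with over $V[\M\restrict\alpha\times\X]$. What the hypothesis that $\bar{\X}^2$ is $\lambda$-cc is actually for is to control the branching of the tree of conditions built during the proof when those conditions are evaluated against the already-fixed $\M\restrict\alpha$- and $\bar{\X}$-generics. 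The cited Lemma~4.1 of \cite{su4} runs the tree construction in $V[\hat{\X}]$, where $\mathcal{T}_{\geq\alpha}$ genuinely is closed (because $\hat{\X}$ is $<\lambda$-distributive), and uses the chain conditions of $\Add(\rho,\tau)$, $\bar{\X}$, and $\M\restrict\alpha$ (each preserved by $\hat{\X}$ by hypothesis) to rule out the branching; this is a finer-grained argument than ``cc-times-closed over the full base,'' and it is precisely the part your sketch replaces with an appeal to Easton's lemma that does not apply.
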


The lemma is immediate from the proof of Lemma 4.1 of \cite{su4}.  As
stated the lemma involves a Prikry type forcing, which we can take to be
trivial.  We apply that lemma in the model $V[\hat{\X}]$ to the forcing
$\mathbb{M}\times \bar{\X}$ in place of the forcing $\mathbb{M}\times
\mathbb{A}$.  In \cite{su4}, $\mathbb{A}$ is a forcing to add some
Cohen subsets but we only used its chain condition in the proof.  Note further
that the forcing $\hat{\X}$ preserves the chain condition and closure of the
posets that are needed in the proof.

\section{The failure of approachability in the final model} \label{finalmodel}

Let $R$ be $\mathbb{R}$-generic.  We give notation for the generic objects added
by $R$.  Let $\langle x_n \mid n < \omega \rangle$ be the Prikry generic
sequence.  For all $n<\omega$ we let $\lambda_n = x_n \cap \kappa$ and for all
$i \leq \omega+3$, $\lambda_{n,i} = (\alpha \mapsto \alpha_i)(\lambda_n)$ where
$\alpha \mapsto \alpha_i$ is the function defined at the beginning of Section 2.

For $n\geq1$ we have the following generic objects induced by $R$:
\begin{enumerate}
\item $Q_n = Q_n^0 \times Q_n^1$ which is generic for $\Q^0(x_{n-1},x_{n})
\times \Q^1(x_{n-1},x_{n})$.
\item $Q_n^0$ can be written as $P_n^0 * S_n^0$ where $P_n^0$ is generic for
$\Add^V(\lambda_{n-1,\omega+2},\lambda_{n})$ and $S_n^0$ is generic for
$\C^+(\Add^V(\lambda_{n-1,\omega+2},\lambda_{n}),
\lambda_{n-1,\omega+3},\lambda_{n})$ over the extension by $P_n^0$.
\item $Q_n^1$ can be written as $P_n^1 * S_n^1$ where $P_n^1$ is generic for
$\Add^V(\lambda_{n-1,\omega+3},\lambda_{n,1})$ and $S_n^1$ is generic for
$\C^+(\Add^V(\lambda_{n-1,\omega+3},\lambda_{n,1}),\lambda_{n},\lambda_{n,1})$.
\item In a cardinal preserving extension, there are generics
$C_n^0,C_n^1$ which are generic for
$\C(\Add^V(\lambda_{n-1,\omega+2},\lambda_{n}),
\lambda_{n-1,\omega+3},\lambda_{n})$ and 
$\C(\Add^V(\lambda_{n-1,\omega+3},\lambda_{n,1}),\lambda_{n},\lambda_{n,1})$
respectively.
\end{enumerate}

Finally, we let $Q_0$ be the induced generic for
$\Coll(\omega,\lambda_{0,\omega})$.

\begin{lemma} In $V[G*H][R]$, $\aleph_{\omega^2+1} \notin
I[\aleph_{\omega^2+1}]$.  \end{lemma}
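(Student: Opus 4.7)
The plan is to exhibit a bad scale of length $\kappa^+ = \aleph_{\omega^2+1}$ in the extension and invoke the implication recorded in Section \ref{preliminaries}(2) that a bad scale at a singular cardinal $\sigma$ entails $\sigma^+ \notin I[\sigma^+]$. Since $\kappa$ becomes $\aleph_{\omega^2}$ of cofinality $\omega$ in $V[G*H][R]$, witnessed by the Prikry sequence $\langle \lambda_n \mid n < \omega \rangle$ (Corollary \ref{boundedsets}), this is the natural route.

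First I would fix an increasing cofinal sequence of regular cardinals below $\kappa$; the natural choice is $\mu_n = \lambda_n$, each of which is preserved and equals $\lambda_{n-1,\omega+3}^+$ in $V[G*H][R]$ by Proposition \ref{productpreservation}. To build the scale I would use the representation property secured by Lemma \ref{lift}: every $\gamma < j(\kappa_1)$ has the form $j(f_\gamma)(\sup j``\kappa_1)$ for some $f_\gamma: \kappa_1 \to \kappa_1$ in $V[G*H]$. Since the derived measures $U_n$ factor through $j$ with high critical point, analogous representations hold for the $U_n$: for each $\gamma \in [\kappa,\kappa^+)$ one obtains ground-model functions $h_\gamma^n: \mathcal{P}_\kappa(\kappa_n) \to \kappa$ and then sets $f_\gamma(n) = h_\gamma^n(x_n) \in \mu_n$. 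A standard argument, using that $\kappa^+$ in the extension equals $\kappa_{\omega+1}$ from $V[G*H]$ which sits well below $j(\kappa_1)$, shows $\vec{f} = \langle f_\gamma \mid \gamma < \kappa^+ \rangle$ is $<^*$-increasing and cofinal in $\prod_n \mu_n$.

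Next I would prove $\vec{f}$ is bad. Suppose for contradiction that a club $C \subseteq \kappa^+$ consists of good points, and fix $r \in \R$ and names forcing this. For $\gamma \in C$ of suitably large cofinality, a goodness witness provides an unbounded $A \subseteq \gamma$ of order type $\cf(\gamma)$ and an $N<\omega$ such that $\langle f_\alpha(n) \mid \alpha \in A\rangle$ is strictly increasing for $n \geq N$. Translating via the representations yields, for each $n \geq N$, a family of size $\cf(\gamma)$ of ground-model functions from $\mathcal{P}_\kappa(\kappa_n)$ whose evaluations at $x_n$ are pointwise strictly increasing. A density argument, based on iterated applications of the Prikry Lemma to shrink the measure one sets of $r$ coordinate by coordinate, should produce a direct extension of $r$ forcing that no such $A$ exists for cofinally many $\gamma$, contradicting the clubness of $C$.

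The main obstacle is executing this density argument against goodness. The key leverage is the high critical point of the factor maps $\hat{k} \circ k$ established in the claim following the definition of the $M_{n-1}^n$: this makes the representations $h_\gamma^n$ uniform enough across the ultrapowers that a single refinement of measure one sets suffices to block goodness simultaneously across coordinates. Combined with the $<\kappa_{\omega+2}$-distributivity of $\Q^\omega$ exploited throughout the proof of the Prikry Lemma, this should enable a direct extension of $r$ that forces no goodness witness at $\gamma$, yielding the desired contradiction and hence the failure of approachability at $\kappa^+$.
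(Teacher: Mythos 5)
Your proposal takes a genuinely different route from the paper: you aim to construct a bad scale of length $\kappa^{+}=\aleph_{\omega^2+1}$ and invoke the implication from Section \ref{preliminaries} that a bad scale at a singular $\sigma$ yields $\sigma^{+}\notin I[\sigma^{+}]$. The paper instead gives a direct reflection argument. Working in $V[G*H][\vec{K}]$ with the $\kappa_{\omega+1}$-supercompactness measure $U^{*}$ derived from $j$, it reflects a putative approachability witness $\langle \dot a_\alpha \mid \alpha<\kappa_{\omega+1}\rangle$ along the ultrapower $k$, picks a condition in $k(\R)$ forcing $\kappa_{\omega+1}=\omega_1$, extracts a $<\kappa$-club $C$ of ordertype $\kappa_{\omega+1}$ from a ground-model club sitting inside a reflected approachability set, and derives a contradiction from the $\kappa_{\omega+1}$-chain condition of $\R/I$ together with the bound that each $\dot a_\alpha$ has ordertype $<\kappa$. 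The paper explicitly remarks that the joint work with Sinapova yields only the weaker failure of weak square and that the direct argument was developed precisely to get approachability. Your bad-scale route is valid in principle, since a bad scale does give the conclusion, but it is not what the paper does, and more importantly you do not carry it out.

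There are two genuine gaps. First, the passage from Lemma \ref{lift} to scale functions is not defined. Lemma \ref{lift} represents each $\gamma<j(\kappa_1)$ as $j(f)(\sup j``\kappa_1)$ for some $f:\kappa_1\to\kappa_1$; that property was engineered to make the collapse posets $\Q_i$ below $j(\kappa_1)$ all equal to a single $\Q$, not to build scales. A Gitik--Sharon scale at $\kappa_{\omega+1}$ is normally obtained from ordertype functions $F_\gamma(x)=\otp(x\cap\gamma)$ on $\mathcal{P}_\kappa(\kappa_{\omega+1})$, or from a ground-model PCF scale, and one must check cofinality and $<^{*}$-increasingness against the interleaved collapses; your $h_\gamma^{n}$ are never specified and that check is not done. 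Second, and decisively, the badness argument is the entire content of the lemma and your proposal supplies none: ``a density argument \dots\ should produce a direct extension of $r$ forcing that no such $A$ exists'' is exactly the step that requires proof, and appealing to the high critical point of $\hat{k}\circ k$ and the distributivity of $\Q^\omega$ does not discharge it. You acknowledge this as the main obstacle, but the obstacle is left standing, so the proposal as written does not establish the lemma.
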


In joint work with Sinapova \cite{su3}, we provided a sufficient
condition for the failure of weak square in diagonal Prikry extensions.  It is
straightforward to check that $\mathbb{R}$ is a \emph{diagonal Prikry forcing}
as in Definition 19 and also satisfies the hypotheses of Theorem 26 from that
paper.  Hence we have the failure of weak square in the extension.  To prove the
lemma above, we give a direct argument for the failure of approachability and
note that the technique generalizes to give a metatheorem for the failure
approachability in extensions by diagonal Prikry type forcing.

\begin{proof}  Recall that $\vec{K}$ is $\Q^\omega$-generic over $V[G*H]$.  Note
that in $V[G*H][\vec{K}]$, $\kappa$ is $\kappa_{\omega+1}$-supercompact as
witnessed by $U^*$ the measure on $\mathcal{P}_\kappa(\kappa_{\omega+1})$
derived from $j$ and $\R/I$ is $\kappa_{\omega+1}$-cc where $I$ is the induced
$\Q^\omega/fin$-generic object.  By Remark \ref{downwardsabsolute}, it is enough
to show that approachability fails when we force with $\R/I$ over
$V[G*H]^{\Q^{\omega}}$.  Assume for a contradiction that $\langle \dot{a}_\alpha
\mid \alpha < \kappa_{\omega+1} \rangle$ is a name for a sequence witnessing
approachability.  Let $k:V[G*H][\vec{K}] \to M$ be the ultrapower by $U^*$.  By
the construction of $\R$, we can choose a condition $r \in k(\R)$ which forces
that $\kappa_{\omega+1}=\omega_1$.  We let $\gamma = \sup k``\kappa_{\omega+1}$.

It follows that $r$ forces that $\gamma$ is approachable with respect to
$k(\langle \dot{a}_\alpha \mid \alpha < \kappa_{\omega+1} \rangle)$.  So there
is a $k(\R/I)$-name $\dot{A}$ for a subset of $\gamma$ all of
whose initial segments are enumerated on the sequence $k(\langle \dot{a}_\alpha
\mid \alpha < \kappa_{\omega+1} \rangle)$ before stage $\gamma$.  By standard
arguments we can assume that $\dot{A}$ is forced to be closed.  Since
$\cf(\gamma) = \kappa_{\omega+1}$ and it is forced by $r$ that every club subset
of $\kappa_{\omega+1}$ contains a club from the ground model, there is a club
subset $B$ of $\gamma$ which is forced to be a subset of $\dot{A}$.  We let
$C = \{ \alpha \mid k(\alpha) \in B\}$.  It is straightforward to see that $C$
is $<\kappa$-club in $\kappa_{\omega+1}$.  Let $\eta$ be the
$\kappa_{\omega}$-th element in an increasing enumeration of $C$.

We can assume that there is an index $\bar{\gamma} < \kappa_{\omega+1}$ such
that $r$ forces $\dot{A} \cap k(\eta)$ is enumerated before stage
$k(\bar{\gamma})$ in $k(\langle \dot{a}_\alpha \mid \alpha < \kappa_{\omega+1}
\rangle )$.  Now for every $x \subseteq C \cap \eta$ of ordertype $\omega$,
there is a condition $r_x \in \R/I$ which forces that $x \subseteq
\dot{a}_\alpha$ for some $\alpha < \bar{\gamma}$.  Note that for a given $x$,
$r$ witnesses $k$ applied to this statement.

By the chain condition of $\R/I$, we can find a condition which forces that for
$\kappa_{\omega+1}$ many $x$, $r_x$ is in the generic.  This is impossible,
since we can assume that each $\dot{a}_\alpha$ is forced to have ordertype less
than $\kappa$ and hence $\vert \bigcup_{\alpha < \bar{\gamma}}
\mathcal{P}(\dot{a}_\alpha) \vert \leq \kappa$. \end{proof}

\begin{remark} Note that none of the specific properties of $\R$ are used in the
proof above and hence the assumptions of Theorem 26 of
\cite{su3} are enough to show the failure of approachability.
\end{remark}

Next we take care of the successors of singulars below $\aleph_{\omega^2}$.

\begin{lemma} There is a condition of length $0$ in $\mathbb{R}$ which forces
that for all $n\geq 1$, there is a bad scale of length $\aleph_{\omega\cdot n
+1}$ in some product of regular cardinals. \end{lemma}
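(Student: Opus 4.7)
The plan is to leverage the supercompactness of each Prikry point $\lambda_n$ in $V[G*H]$ (inherited from $\lambda_n \in Z$ and the preparation) to produce a bad scale of length $\lambda_{n, \omega+1} = \aleph_{\omega\cdot n+1}$ in the product $\prod_{k<\omega} \lambda_{n, k}$ in $V[G*H]$, and then argue that this badness is preserved by $\R$. Since $\R$ constrains the Prikry generic to take values in $Z$ and the bad-scale construction is uniform in $\alpha \in Z$, the empty condition forces the existence of bad scales at each $\aleph_{\omega\cdot n+1}$ for $n \geq 1$ simultaneously.

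For the construction in $V[G*H]$, I would fix $\alpha \in Z$ and apply a lifting argument parallel to Lemma \ref{lift}, but starting from an $\alpha_{\omega+1}$-supercompactness measure on $\alpha$, to obtain $k_\alpha : V[G*H] \to M$ with $\crit(k_\alpha) = \alpha$ and $k_\alpha(\alpha) > \alpha_{\omega+1}$. Fix any scale $\vec{f} = \langle f_\gamma \mid \gamma < \alpha_{\omega+1}\rangle$ in $\prod_{k<\omega} \alpha_k$; such a scale exists by Shelah's PCF theorem, since $\alpha_\omega$ is singular of cofinality $\omega$ with successor $\alpha_{\omega+1}$ in $V[G*H]$. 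To show $\vec{f}$ is bad, examine $\delta = \sup k_\alpha``\alpha_{\omega+1}$. Following the Gitik--Sharon and Sinapova--Unger \cite{su3} style argument, $\delta$ has $M$-cofinality $\alpha_{\omega+1}$ and is a bad point for $k_\alpha(\vec{f})$: no cofinal $A \subseteq \delta$ of order type $\alpha_{\omega+1}$ with $\langle k_\alpha(\vec{f})(\eta)(k) \mid \eta \in A\rangle$ uniformly increasing for $k \geq N$ can exist, because such uniformity would contradict the canonical PCF structure of $\vec{f}$ transported across $k_\alpha$. By elementarity, the set of bad points for $\vec{f}$ is stationary in $\alpha_{\omega+1}$ in $V[G*H]$.

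For preservation under $\R$, I would fix $n \geq 1$ and work below a condition $r$ of length $n+2$ so that $\lambda_n$ is pinned down. By Corollary \ref{boundedsets}, every subset of $\lambda_{n, \omega+1}$ in $V[G*H][R]$ lies in the extension $V[G*H][Q_0 \times \cdots \times Q_{n+1}]$, where $Q_0 = \Coll(\omega, \lambda_{0, \omega})$ and $Q_i$ is generic for $\Q(x_{i-1}^r, x_i^r)$ for $1 \leq i \leq n+1$. The piece $Q_0 \times \cdots \times Q_n$ has size $\lambda_{n, 1} < \lambda_{n, \omega+1}$ and is $\lambda_{n, \omega+1}$-cc, so it preserves both the cofinality of the scale and the stationarity of the bad set at $\lambda_{n, \omega+1}$. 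The factor $Q_{n+1} = \Q(\lambda_n, \lambda_{n+1})$ is $\lambda_{n, \omega+2}$-closed and hence adds no new subsets of $\lambda_{n, \omega+1}$ whatsoever. Therefore $\vec{f}$ remains a scale and its bad set remains stationary in $V[G*H][R]$.

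The hard part will be the argument in $V[G*H]$ that $\delta = \sup k_\alpha``\alpha_{\omega+1}$ is genuinely bad for $k_\alpha(\vec{f})$ in $M$. This requires fine control on how the lifted embedding $k_\alpha$ acts on each cardinal $\alpha_k$ and on the pointwise values $k_\alpha(f_\gamma)(k)$; since the stage-$\alpha$ preparation involves both $\Add$ and $\C$ components at every level between $\alpha_1$ and $\alpha_{\omega+3}$, the badness argument must exploit the closure and chain-condition structure carefully set up by the preparation, analogous to the bad-scale constructions in \cite{su3}. A secondary difficulty is confirming that the lifted embedding $k_\alpha$ exists uniformly for every $\alpha \in Z$; this should follow by a straightforward adaptation of Lemma \ref{lift} to the $\alpha_{\omega+1}$-supercompactness level at each $\alpha$.
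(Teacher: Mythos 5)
Your overall plan (construct bad scales at each $\lambda_{n,\omega+1}$ in $V[G*H]$ and then argue preservation under $\R$) matches the paper's template, but both halves have substantive gaps.

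For the construction: you propose to lift an $\alpha_{\omega+1}$-supercompactness embedding $k_\alpha$ for each $\alpha \in Z$ to $V[G*H]$. This is unlikely to work as stated, because the preparation iteration $\A$ does nontrivial forcing at every $\alpha \in Z$, which generically destroys the supercompactness of $\alpha$ (in the final model $\alpha$ becomes a double successor of a smaller cardinal). The paper never claims — and does not need — that the $\alpha \in Z$ remain supercompact after the preparation. Instead it exploits only the supercompactness of $\kappa$ in $V[G*H]$: it fixes a single scale $\vec{f}$ of length $\kappa^{+\omega+1}$, applies the standard argument to get a $U_0$-measure-one set $A$ of $\delta$ for which $\vec{f}$ has stationarily many bad points of cofinality $\delta_{\omega+1}$, and then reflects the \emph{statement} ``there is a scale of length $\gamma_{\omega+1}$ with stationarily many bad points of cofinality $\delta_{\omega+1}$ for every $\delta \in A \cap \gamma$'' to a $U_0$-measure-one set $B$. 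Restricting the measure-one sets of the trivial condition so that every Prikry point lands in $A \cap B$ then gives, for every pair $x_0 \prec x_n$ on the generic sequence, a scale of length $\kappa_{x_n,\omega+1}$ with stationarily many bad points of cofinality $\kappa_{x_0,\omega+1}$ already in $V[G*H]$. Your approach also leaves the cofinality of the bad points muddled: $\sup k_\alpha``\alpha_{\omega+1}$ has $M$-cofinality $\alpha_{\omega+1}$, but a bad point must have cofinality strictly below the length of the scale, and the cofinality you actually need (so that the preservation step can work) is $\lambda_{0,\omega+1}$, the future $\aleph_1$.

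For the preservation: your argument establishes only that $\vec{f}$ remains a scale and that the \emph{set} $S$ of bad points remains stationary, via smallness and chain condition of $Q_0\times\cdots\times Q_n$ relative to $\lambda_{n,\omega+1}$. That is not enough — the delicate point is showing that each individual $\delta \in S$ stays \emph{bad}, i.e.\ that no $Q_i$ can manufacture a new witness $A\subseteq\delta$ to goodness. This cannot follow from smallness of $\prod_{i\leq n}Q_i$ relative to $\lambda_{n,\omega+1}$, because the relevant comparison is to $\cf(\delta)=\lambda_{0,\omega+1}$, and $Q_1,\dots,Q_n$ are much larger than that. The paper's resolution is that in $V[G*H][\prod_{i<n}Q_i]$ the cardinal $\lambda_{0,\omega+1}$ becomes $\aleph_1$, and every $\aleph_1$-sequence of ordinals in that extension already lies in the subextension by the single collapse $\Coll(\omega,\lambda_{0,\omega})$, because the remaining factors are sufficiently closed. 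Since $\Coll(\omega,\lambda_{0,\omega})$ has size $<\lambda_{0,\omega+1}=\cf^{V[G*H]}(\delta)$, any putative good witness $A$ of ordertype $\aleph_1$ would contain an unbounded subset lying in $V[G*H]$, contradicting badness of $\delta$ there. Without this factorization into one small piece plus a closed piece, the preservation of badness does not follow, so this step needs to be supplied.

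Finally, a minor point: Shelah's theorem gives a scale of length $\alpha_{\omega+1}$ in \emph{some} product $\prod_{k}\alpha_{n_k}$ of regular cardinals cofinal in $\alpha_\omega$, not necessarily in $\prod_{k<\omega}\alpha_k$; the paper is careful to phrase the statement this way.
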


It follows that for all $n<\omega$, $\aleph_{\omega\cdot n +1} \notin
I[\aleph_{\omega\cdot n +1}]$.

\begin{proof} Working in $V[G*H]$, fix a scale $\vec{f}$ of length
$\kappa^{+\omega+1}$ in some product of regular cardinals.  By standard
arguments there is a $U_0$-measure one set $A$ such that for all $\delta \in A$,
there are stationarily many bad points for $\vec{f}$ of cofinality
$\delta_{\omega+1}$.  % Go by contradiction.
This is absolute to $M[G^**H^*]$.  It follows that
$\kappa$ is in the set given by $j$ applied to $B = \{ \gamma \mid $ there is a
scale of length $\gamma_{\omega+1}$ such that for all $\delta \in A \cap \gamma$
there are stationarily many bad points of cofinality $\delta_{\omega+1} \}$.  It
follows that $B \in U_0$.

Now for each $i \geq 1$, there is a $U_i$-measure one set $A_i$ of $x$ such that
$\kappa_x \in A \cap B$.  For any $x \prec y$ such that for some $i<i'$ $x \in
A_i$ and $y \in A_{i'}$, we have arranged the following property.  Since
$\kappa_x \in A \cap B \cap \kappa_y$ and by the choice of $A$ and $B$,  there
are stationary many bad points of cofinality $\kappa_{x,\omega+1}$ for some
scale of length $\kappa_{y,\omega+1}$.

The condition required for the lemma is any condition length $0$ whose measure
one sets are contained in the $A_i$.  Work below such a condition and fix $n
\geq 1$.  Let $p$ be a condition of length $n+1$ and $\vec{f}$ be a scale of
length $\kappa_{x_n^{p},\omega+1}$ such that there is a stationary set $S$ of
bad points of cofinality $\kappa_{x_0^p,\omega+1}$.  By Corollary
\ref{boundedsets}, it is enough to show that $\vec{f}$ remains a scale with
stationary set of bad points $S$ in the model $V[G*H][\prod_{i<n}Q_i]$.  The
forcing to add $\prod_{i<n} Q_i$ is small relative to $\kappa_{x_n^p,\omega+1}$
and hence it is easy to see that $\vec{f}$ remains a scale and $S$ remains
stationary.  So it is enough to show that every point in $S$ is still bad for
$\vec{f}$ in the extension.

In this extension $\kappa_{x_0^p,\omega+1}$ becomes $\aleph_1$ via
$\Coll(\omega, \kappa_{x_0^p,\omega})$ and every $\aleph_1$-sequence of
ordinals in $V[G*H][\prod_{i<n}Q_i]$ is in the extension by this collapse.  Now
a standard argument shows that for every $\delta \in S$ and every unbounded
subset $A$ of $\delta$ in the extension there is an unbounded subset of $A$ in
$V[G*H]$.  So if $A$ witnesses that $\delta \in S$ is good in the extension,
then there is an unbounded subset of $A$ witnessing that $\delta$ is good in
$V[G*H]$.  This is impossible, so we must have the every point in $S$ is bad in
$V[G*H][\prod_{i<n}Q_i]$.  This completes the proof. \end{proof}

For the cardinals which are not successors of singulars, we apply the scheme
from the previous section.  Throughout the proofs of the following lemmas, we
omit the straightforward but tedious verification that our scheme from Section
\ref{schematic} applies and that the hypotheses of our preservation lemmas hold.

\begin{lemma}  In $V[G*H][R]$, $\aleph_{\omega^2+2} \notin
I[\aleph_{\omega^2+2}]$ \end{lemma}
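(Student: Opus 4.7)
The plan is to apply the schematic argument of Section~\ref{schematic} with $\rho = \kappa_1$, $\sigma = \kappa_{\omega+1}$, and $\tau = \kappa_{\omega+2}$, so that $\M = \M(\kappa_1, \kappa_{\omega+1}, \kappa_{\omega+2})$ is recognized, up to projection, as the factor $\Add(\kappa_1, \kappa_{\omega+2}) * \C_\omega^+$ lying inside the $\kappa$-stage of the preparation. By Remark~\ref{downwardsabsolute} it is enough to show $\kappa_{\omega+2} \notin I[\kappa_{\omega+2}]$ in some outer model of $V[G*H][R]$ that preserves the cardinal structure up to $\kappa_{\omega+3}$.

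First I would pass to the outer model $W = V[G*H][\vec{K}][R^*]$, where $\vec{K}$ is a $\Q^\omega$-generic over $V[G*H]$ extending the induced $\Q^\omega/\mathrm{fin}$-generic $I$, and $R^*$ is $\R/I$-generic over $V[G*H][\vec{K}]$ projecting to $R$. By Claim~\ref{qomegapres} and the Knaster property of $\R/I$, cardinals up to $\kappa_{\omega+3}$ are unchanged in $W$. Next I would use the term forcing machinery of Section~\ref{preliminaries} to present $W$ as contained in a generic extension of $V$ by a product $\M \times \X$, where $\X$ absorbs everything else: the preparation iteration $\A$, the factors of $(\P * \C^+) \times \Add(\kappa_1, \theta^+ \setminus \theta)$ external to $\M$, the product $\Q^\omega$, and the Prikry quotient $\R/I$.

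The substantive step is to verify the hypotheses of Lemma~\ref{formerapproximation} with $\lambda = \kappa_{\omega+1}$. I would place into $\bar{\X}$ the preparation $\A$, the factor $\P_0 * \C_0^+$, the complementary Cohen factor $\Add(\kappa_1, \theta \setminus \kappa_{\omega+2})$, the product $\prod_{0 < i < \omega} \C_i$ (as term forcing), the Cohen factor $\Add(\kappa_1, \theta^+ \setminus \theta)$, and the quotient $\R/I$; into $\hat{\X}$ I would place the term forcing $\C_{\omega+1}$ and the product $\Q^\omega$. Each summand of $\bar{\X}$ is $\kappa_{\omega+1}$-Knaster: $\A$ is $\kappa$-cc, each of the Cohen and Mitchell-type components on the $\kappa$-stage has Knaster number at most $\kappa_\omega$, and $\R/I$ is $\kappa_{\omega+1}$-Knaster by the claim proved in Section~\ref{mainforcing}; their product remains $\kappa_{\omega+1}$-Knaster, so $\bar{\X}^2$ is $\kappa_{\omega+1}$-cc. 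The poset $\hat{\X}$ is $\kappa_{\omega+2}$-closed in $V$, since $\C_{\omega+1}$ is $\kappa_{\omega+2}$-closed by definition and $\Q^\omega$ is $\kappa_{\omega+2}$-closed as shown in Claim~\ref{dist0}, so in particular $\hat{\X}$ is $<\kappa_{\omega+1}$-distributive. Easton's lemma then shows that $\hat{\X}$ preserves the $\kappa_{\omega+1}$-cc of $\Add(\kappa_1, \kappa_{\omega+2})$ and of $\bar{\X}$, while a routine Easton-style stagewise analysis of $\A$ produces subforcings $\X \restrict \alpha$ witnessing that $\X$ is $\alpha$-cc over $V[\M \restrict \alpha]$ along a club in $\kappa_{\omega+2}$.

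With these hypotheses in hand, Lemma~\ref{formerapproximation} yields the $\kappa_{\omega+1}$-approximation property of $\M/(\M \restrict \alpha)$ over $V[\M \restrict \alpha \times \X]$ at the inaccessible $\alpha < \kappa_{\omega+2}$ supplied by the reflection step of Section~\ref{schematic}, and the scheme then delivers the failure of approachability at $\kappa_{\omega+2}$ in $W$, and hence in $V[G*H][R]$. The principal obstacle I expect is the bookkeeping around the term-forcing rearrangement: one must cleanly separate $\C_{\omega+1}$, whose ordering involves names from all of $\P_1$, from the Cohen factor $\Add(\kappa_1, \kappa_{\omega+2})$ lying inside $\M$, and simultaneously confirm that the Knaster property of $\bar{\X}$ survives the simultaneous product of all of its ingredients, particularly the Prikry quotient $\R/I$, whose chain condition relies on features specific to the structure of $\R$.
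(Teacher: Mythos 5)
Your high-level target---identifying $\M(\kappa_1,\kappa_{\omega+1},\kappa_{\omega+2})$, i.e.\ $\Add(\kappa_1,\kappa_{\omega+2})*\C_\omega^+$, inside the $\kappa$-stage of the preparation and feeding it into the scheme of Section~\ref{schematic}---is the right one, and it matches what the paper does. However, the way you set up $\X$ has two genuine gaps that the paper's own argument is specifically designed to avoid.

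First, you try to put the Prikry quotient $\R/I$ directly into $\bar{\X}$. This cannot work as stated: $\R/I$ is not a poset of the base model $V$ (nor of $V[G]$); it is only defined after you have the $\Q^\omega/\mathrm{fin}$-generic $I$, which itself lives in the extension by $\Q^\omega$ --- and you simultaneously placed $\Q^\omega$ into $\hat{\X}$. So $\bar{\X}$ would depend on the generic object added by $\hat{\X}$, which is incoherent for the scheme (the scheme requires $\M\times\X$ to be an honest product in a fixed ground model where $\tau$ is Mahlo). The paper resolves this by a different mechanism entirely: it first applies Theorem~\ref{GK} (Gitik--Krueger), using the fact that $\bar{\R}$ is $\kappa_\omega$-centered over $V[G*H][\vec{K}]$, to reduce the problem to showing $\kappa_{\omega+2}\notin I[\kappa_{\omega+2}]$ in $V[G*H][\vec{K}]$. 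After that reduction, the Prikry forcing has vanished from the picture and the scheme only has to account for $\vec{K}$ and the pieces of $G*H$.

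Second, your choice $\lambda=\kappa_{\omega+1}$ forces you to claim that $\bar{\X}^2$ is $\kappa_{\omega+1}$-cc while having placed the full-support product $\prod_{0<i<\omega}\C_i$ into $\bar{\X}$. That each factor $\C_i$ is $\kappa_{i+2}$-Knaster does not make their full-support product $\kappa_{\omega+1}$-Knaster (or even $\kappa_{\omega+1}$-cc); the product has cardinality on the order of $\kappa_\omega^\omega$ and you would need a substantive argument here, which is not supplied. The paper sidesteps this by moving $\prod_{0<i<\omega}C_i$ into $\hat{\X}$ (where only closure is needed) and by choosing the much smaller $\lambda=\kappa_2$; then $\bar{\X}$ is just $P_0\times C_0\times P_1\restrict[\kappa_{\omega+2},\theta^+]$, whose square is $\kappa_2$-cc by routine $\Delta$-system and Knaster reasoning. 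Relatedly, the paper does not put $\C_{\omega+1}$ into $\hat{\X}$ at all, but instead passes to $V[\vec{K}][G][C_{\omega+1}]$ as a new ground (checking $\kappa_{\omega+2}$ is still Mahlo there), which keeps the bookkeeping cleaner. In short: the Gitik--Krueger reduction and the $\lambda=\kappa_2$ choice are not cosmetic, they are what makes the chain-condition side of Lemma~\ref{formerapproximation} verifiable, and your proposal is missing both.
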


\begin{proof} Note that by Remark \ref{downwardsabsolute} it is enough to show
the conclusion in an outer model with the same cardinals up to
$\aleph_{\omega^2+2}$.  So we show that it holds in $V[G*H][\vec{K}][\bar{R}]$
where $\vec{K}$ is generic for $\mathbb{Q}^\omega$ and $\bar{R}$ is generic for
$\bar{\mathbb{R}}$ as defined in the extension by $\vec{K}$.  Since
$\bar{\mathbb{R}}$ is $\kappa_{\omega}$-centered in $V[G*H][\vec{K}]$, by
Theorem \ref{GK} it is enough to show that $\kappa_{\omega+2} \notin
I[\kappa_{\omega+2}]$ in $V[G*H][\vec{K}]$.

We write $H$ as $P_0 \times P_1 * \prod_{i \leq \omega+1} C^+_i$ where $P_1$ is
generic for $\P_1 \times \Add(\kappa_1,\theta^+ \setminus \theta) \simeq
\Add(\kappa_1, \theta^+)$.  Again by Remark \ref{downwardsabsolute}, it is
enough to show $\kappa_{\omega+2} \notin I[\kappa_{\omega+2}]$ in the extension
\[ V[\vec{K}][G][C_{\omega+1}][\prod_{i<\omega}C_i][P_0][P_1 \restrict
[\kappa_{\omega+2},\theta^+)][P_1 \restrict
\kappa_{\omega+2} * C_{\omega}^+].
\]
Note that in $V[\vec{K}][G][C_{\omega+1}]$ we have that $\kappa_{\omega+2}$ is
still Mahlo.  So in this model we apply the scheme from Section \ref{schematic}
and Lemma \ref{formerapproximation} where
\begin{enumerate}
\item $\lambda = \kappa_2$,
\item $\M$ is the forcing to add $P_1 \restrict \kappa_{\omega+2} * C_\omega^+$,
\item $\bar{\X}$ is the forcing to add $P_0 \times C_0 \times P_1 \restrict
[\kappa_{\omega+2},\theta^+]$,
\item $\hat{\X}$ is the forcing to add $\prod_{0<i<\omega}C_i$
\end{enumerate}
\end{proof}

\begin{lemma} In $V[G*H][R]$, $\aleph_{\omega^2+3} \notin
I[\aleph_{\omega^2+3}]$. \end{lemma}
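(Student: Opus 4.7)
The plan is to follow the template of the preceding lemma, substituting the Mitchell-like poset that produces $\aleph_{\omega^2+3} = \kappa_{\omega+3}$ as the successor of $\kappa_{\omega+2}$. Since $\kappa_{\omega+3} = \kappa_{\omega+1}^{++}$ and $\bar{\R}$ is $\kappa_\omega$-centered over $V[G*H][\vec{K}]$, Theorem \ref{GK} reduces the task to showing $\kappa_{\omega+3} \notin I[\kappa_{\omega+3}]$ in $V[G*H][\vec{K}]$.

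Using Remark \ref{downwardsabsolute} and a term-forcing rearrangement parallel to the one in the previous lemma, it then suffices to prove failure of approachability in a cardinal-preserving outer model of the form
\[
V[G][\bar{\X}][\hat{\X}][\P_1 * \C_{\omega+1}^+],
\]
where $\bar{\X}$ is an extension by the $\kappa_2$-Knaster product $P_0 \times C_0 \times H_2$ and $\hat{\X}$ is an extension by $\vec{K} \times \prod_{0 < i \leq \omega} C_i$. Crucially, $\kappa_{\omega+3}$ remains Mahlo in the base $V[G]$, since $\A$ has size $\kappa < \kappa_{\omega+3}$ and thus does not destroy the Mahloness inherited from $V$.

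I would then invoke the scheme from Section \ref{schematic} together with Lemma \ref{formerapproximation} using $\lambda = \kappa_2$, $\M = \M(\kappa_1, \kappa_{\omega+2}, \kappa_{\omega+3}) = \P_1 * \C_{\omega+1}^+$, and the $\bar{\X}, \hat{\X}$ above. The relevant verifications are: $\bar{\X}^2$ is $\kappa_2$-Knaster since each factor is; $\hat{\X}$ is $\kappa_2$-closed since $\C_1$ is $\kappa_2$-closed and the other factors (the $\C_i$ for $i > 1$ and $\Q^\omega$) are even more closed; and by Easton's lemma $\hat{\X}$ preserves the $\kappa_2$-cc of $\Add(\kappa_1, \kappa_{\omega+3})$ and of $\bar{\X}$. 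Lemma \ref{formerapproximation} then supplies the $\kappa_2$-approximation property for the tail forcing $\M/\M\restrict \alpha$ at an inaccessible $\alpha$ in the intersection of the scheme's club and the diagonal set $D$, and the scheme closes the proof by contradicting the assumed witness $\dot{A}$ to approachability at $\alpha$.

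The main obstacle will be the tedious verification that the scheme of Section \ref{schematic} and the hypotheses of Lemma \ref{formerapproximation} really hold in this set-up -- in particular, that $\X$ is $\alpha$-cc in $V[G][\M\restrict \alpha]$ for every relevant $\alpha \leq \kappa_{\omega+3}$, and that the outer-model rearrangement genuinely preserves cardinals up through $\kappa_{\omega+3}$ over $V[G*H][\vec{K}]$. These checks are of the same ``straightforward but tedious'' character the author bypasses in the previous two lemmas.
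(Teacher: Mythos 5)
Your proposal goes off the rails at the treatment of $\vec{K}$. You correctly realized that $\kappa_{\omega+3}$ must stay Mahlo in the base model over which the scheme is run, and correctly observed that $\vec{K}$ cannot simply be absorbed into the base (as it was for $\aleph_{\omega^2+2}$, where the base was $V[\vec{K}][G][C_{\omega+1}]$ and only $\kappa_{\omega+2}$'s Mahloness was needed): $\Q^\omega$ is only $\kappa_{\omega+2}$-closed and, via the $\Add(\kappa_{\omega+2}, j(\kappa))$ factors, blows up $2^{\kappa_{\omega+2}}$ past $\kappa_{\omega+3}$, destroying its inaccessibility. But your fix --- putting all of $\vec{K}$ inside $\hat{\X}$ and working over the base $V[G]$ --- violates hypothesis (1) of Lemma \ref{formerapproximation}: $\X = \bar{\X} \times \hat{\X}$ must be $\alpha$-cc in the extension by $\M\restrict\alpha$ for every $\alpha \leq \tau = \kappa_{\omega+3}$, and this $\tau$-chain condition on $\M\times\X$ is precisely what the scheme uses to extract a ground-model club $C$ from $\dot{C}$ and to keep $\alpha$ a cardinal after forcing with $\X\restrict\alpha$. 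The forcing $\Q^\omega$ is not $\kappa_{\omega+3}$-cc (its $\C$-parts have conditions of size up to $j(\kappa)$), so $\hat{\X}$ fails this requirement. This is not one of the ``straightforward but tedious'' checks you defer --- it is false as stated.

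The missing idea, and the move the paper actually makes, is to use the decomposition of $\Q$ from the proof of Claim \ref{qomegapres}: pass to a cardinal-preserving outer model where $\vec{K}$ splits as $K^0 \times K^1$ with $K^0$ generic for $\kappa_{\omega+3}$-closed forcing and $K^1$ generic for $\Add(\kappa_{\omega+2},\eta)$, both from $V$. The closed piece $K^0$ goes into the base model $V[K^0][G]$ (where $\kappa_{\omega+3}$ stays Mahlo, since closed forcing adds no bounded subsets of $\kappa_{\omega+3}$), and the cc piece $K^1$ becomes $\hat{\X}$. This forces $\lambda = \kappa_{\omega+2}$ rather than your $\kappa_2$: $K^1$ is only $<\kappa_{\omega+2}$-distributive, not $\kappa_2$-closed, so a smaller $\lambda$ would not satisfy hypothesis (2). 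Correspondingly, $\bar{\X}$ in the paper is $P_0 \times P_1 \restrict [\kappa_{\omega+3},\kappa_{\omega+3}^+) \times \prod_{i\leq\omega}C_i$, which is $\kappa_{\omega+2}$-Knaster in $V[K^0][G]$. Your choice of $\M$ is correct, but the surrounding decomposition and the value of $\lambda$ are not.
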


The proof is similar to the proof of the previous lemma with some
changes of the details.

\begin{proof} Again by Theorem \ref{GK} and Claim \ref{qomegapres} it is enough
to show that $\kappa_{\omega+3} \notin I[\kappa_{\omega+3}]$ in
$V[G*H][\vec{K}]$.  By the proof of Claim \ref{qomegapres}, there is a cardinal
preserving outer model of $V[G*H][\vec{K}]$ where we have decomposed $\vec{K}$
as $K^0 \times K^1$ which is generic for the product of
$\kappa_{\omega+3}$-closed forcing and $\kappa_{\omega+3}$-cc forcing both taken
from $V$.  The $\kappa_{\omega+3}$-cc forcing is just
$\Add(\kappa_{\omega+2},\eta)$ for some $\eta$.

As in the previous lemma we pass to an outer model where we have decomposed $H$.
In particular it is enough to prove that $\kappa_{\omega+3} \notin
I[\kappa_{\omega+3}]$ in the model
\[ V[K^0][G][P_0 \times \prod_{i \leq \omega} C_i][K^1][P_1 \restrict
[\kappa_{\omega+3}, \kappa_{\omega+3}^+)][P_1 \restrict \kappa_{\omega+3} *
C^+_{\omega+1}]. \]
Note that in $V[K^0][G]$, $\kappa_{\omega+3}=\theta$ is still Mahlo.  So in this
model we apply the scheme from Section \ref{schematic} and Lemma
\ref{formerapproximation} where
\begin{enumerate}
\item $\lambda = \kappa_{\omega+2}$,
\item $\M$ is the
forcing to add $P_1 \upharpoonright \kappa_{\omega+3} * C_{\omega+1}^+$,
\item $\bar{\X}$ as the forcing to add $P_0\times P_1 \restrict
[\kappa_{\omega+3},\kappa_{\omega+3}^+) \times
\prod_{i \leq \omega}C_i$, and
\item $\hat{\X}$ as the forcing to add $K^1$.
\end{enumerate}
\end{proof}

\begin{lemma} \label{finalargument} In $V[G*H][R]$ for each successor $\tau$ of
a regular cardinal with $\tau \in [\aleph_2, \aleph_{\omega^2})$, $\tau \notin
I[\tau]$.  \end{lemma}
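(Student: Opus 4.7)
The approach is a case analysis on $\tau$ according to which Mitchell-like poset of the construction turned $\tau$ into a successor, followed by the scheme from Section \ref{schematic}. In every case $\tau = \sigma^+$ where $\sigma$ is itself a successor in some intermediate model, so the hypotheses of Lemma \ref{formerapproximation} have a chance, with an appropriate $\M = \M(\rho,\sigma,\tau)$.

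Let $n$ be the unique index with $\tau \in (\lambda_{n-1,\omega+3},\lambda_{n,\omega+3}]$, taking $\lambda_{-1,\omega+3}=\omega$. If $\tau = \lambda_n$ (so $n \geq 1$), then $\M = \Q^0(x_{n-1},x_n) = \M(\lambda_{n-1,\omega+2},\lambda_{n-1,\omega+3},\lambda_n)$, coming from the main forcing. If $\tau = \lambda_{n,1}$, then $\M = \Q^1(x_{n-1},x_n) = \M(\lambda_{n-1,\omega+3},\lambda_n,\lambda_{n,1})$, again from the main forcing. Otherwise $\tau = \lambda_{n,i+1}$ with either $1 \leq i < \omega$ or $i \in \{\omega+1,\omega+2\}$, and $\M$ is the component $\C_i(\lambda_n)$ (paired with the relevant piece of $\P_1(\lambda_n)$) of the preparation iteration at stage $\lambda_n$, of the form $\M(\lambda_{n,1},\lambda_{n,i},\lambda_{n,i+1})$.

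In each case I would follow the template of the preceding two lemmas: invoke Remark \ref{downwardsabsolute} to pass to a convenient outer model, apply Theorem \ref{GK} to absorb the $\kappa_\omega$-centered tail of the Prikry forcing (reducing to $V[G*H][\vec{K}]$ or a further decomposition thereof), and then split the remaining generics coming from $G$, $H$, $\vec{K}$, and the other collapses $\Q(x_{m-1},x_m)$ with $m \neq n$ so as to isolate $\M$ while presenting everything else as $\bar{\X}\times\hat{\X}$, with $\bar{\X}$ being $\lambda$-cc and $\hat{\X}$ being $<\lambda$-distributive for a suitable $\lambda \leq \sigma$. Lemma \ref{formerapproximation} then gives $\M/\M\restrict\alpha$ the $\lambda$-approximation property, and the scheme of Section \ref{schematic} yields $\tau \notin I[\tau]$.

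The main obstacle is the first two cases, where the relevant Mitchell collapse sits inside the Prikry iteration rather than in the preparation. The key observation is that, by the Prikry lemma and Corollary \ref{boundedsets}, once a stem decides $x_0,\dots,x_n$ the collapses $\Q(x_{i-1},x_i)$ for $i \leq n$ are essentially forcings from $V[G*H]$, while the Prikry points and collapses of index greater than $n$ live well above $\lambda_{n,\omega+3}$ and can be placed in the highly closed factor $\hat{\X}$, along with the closed pieces produced by Proposition \ref{outermodel}. The preparation, the lower collapses determined by the stem, and the small-cc parts of the higher collapses then go into $\bar{\X}$. The rest is a bookkeeping verification that each ingredient satisfies the hypotheses of Lemma \ref{formerapproximation}.
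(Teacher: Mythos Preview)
Your case split and your identification of the Mitchell poset $\M$ for each $\tau$ are essentially those of the paper (with minor slips: for $\tau=\lambda_{n,i+1}$ the relevant component is $\C_{i-1}(\lambda_n)$ rather than $\C_i(\lambda_n)$, and the case $\tau=\lambda_{n,2}$ uses $\C_0$, which is built over $\P_0(\lambda_n)$, so $\rho=\lambda_{n,0}$ there). The real gap is in how you propose to dispose of the Prikry forcing.

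Theorem~\ref{GK} does not apply: $\bar{\R}$ is only $\kappa_\omega$-centered, whereas for $\tau<\kappa$ you would need $\mu$-centeredness with $\mu$ the predecessor of $\tau$, and $\mu<\kappa<\kappa_\omega$. Beyond this, reducing to $V[G*H][\vec{K}]$ makes no sense for these $\tau$, since the Prikry points $x_m$---and hence the cardinals $\lambda_{m,i}$ and $\tau$ itself---are determined only by $R$, not by $\vec{K}$. Your fallback for the cases $\tau\in\{\lambda_n,\lambda_{n,1}\}$, placing the Prikry points and collapses of index greater than $n$ into $\hat{\X}$, also fails: even below a fixed stem the remaining Prikry forcing adds an $\omega$-sequence cofinal in $\kappa$ and is not countably distributive, let alone $<\lambda$-distributive as Lemma~\ref{formerapproximation} requires of $\hat{\X}$.

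The paper's route avoids both problems and never passes through $\vec{K}$ or $\bar{\R}$ for these $\tau$. Since $\tau<\kappa$, any sequence $\langle a_\alpha\mid\alpha<\tau\rangle$ witnessing $\tau\in I[\tau]$ is coded by a bounded subset of $\kappa$, and Corollary~\ref{boundedsets} already places it in the small inner model $V[G\restrict\lambda_m+1][\prod_{k\leq m}Q_k]$ for a suitable $m$ depending on the case. One then passes, via Remark~\ref{downwardsabsolute}, to a carefully chosen outer model of \emph{this} extension with the same cardinals up to $\tau$, and applies the scheme and Lemma~\ref{formerapproximation} there. The decomposition into $\M$, $\bar{\X}$, $\hat{\X}$ is purely a rearrangement of the preparation $\A\restrict\lambda_m+1$ (sometimes via term forcings as in Fact~\ref{term-iteration}) together with the finitely many generics $Q_k$; no residual Prikry forcing appears anywhere.
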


\begin{proof}  There are a few cases based on how close $\tau$ is to the
collapses between the Prikry points.  Some cardinals we must treat individually
and others we can treat uniformly.

First we assume that $\tau = \aleph_2$.  Note that in $V[G*H][R]$,
$\lambda_{0,\omega+1}=\aleph_1$ and $\lambda_{0,\omega+2} = \aleph_2$.  Notice
that any sequence witnessing that $\aleph_2 \in I[\aleph_2]$ is in the extension
$V[G\upharpoonright \lambda_0+1][Q_0 \times Q_1]$.  Recall that $Q_0$ is generic
for $\Coll(\omega,\lambda_{0,\omega})$ and $Q_1$ is generic for
$\lambda_{0,\omega+2}$-closed forcing from $V$.

By Theorem \ref{GK} it is enough to show that $\tau \notin I[\tau]$ in the model
$V[G\upharpoonright \lambda_0+1][Q_1]$.  The proof of this is simpler than the
proof of the next case, so we continue.

Next we assume that $\tau = \lambda_{n,\omega+2}$ where $0<n<\omega$  Any
sequence witnessing that $\tau \in I[\tau]$ in $V[G*H][R]$ is in the extension
$V[G\upharpoonright \lambda_n+1][\prod_{k\leq n} Q_k][Q_{n+1}]$.  As before we
let $P_0 \times P_1$ be generic for $\P_0(\lambda_n) \times (\P_1(\lambda_n)
\times \Add(\lambda_{n,1}, \theta^+_{\lambda_n} \setminus \theta_{\lambda_n}))$
and $\prod_{k \leq \omega+1} C_k^+$ be generic for $\C^+(\lambda_n)$.

By Remark \ref{downwardsabsolute}, it is enough to show that $\tau \notin
I[\tau]$ in the model
\[
V[G \restrict \lambda_n][C_{\omega+1}][Q_{n+1}][\prod_{k \leq n}
Q_k][C_0][P_0][P_1 \restrict [\lambda_{n,\omega+2},
\theta_{\lambda_n}^+)][\prod_{0<i<\omega}C_i][P_1 \restrict
\lambda_{n,\omega+2} * C_\omega^+]
\]
Note that in $V[G \restrict \lambda_n][C_{\omega+1}][Q_{n+1}]$,
$\lambda_{n,\omega+2}$ is still Mahlo.  So in this model we apply the scheme
from Section \ref{schematic} and Lemma \ref{formerapproximation} where
\begin{enumerate}
\item $\lambda$ is $\lambda_{n,2}$,
\item $\M$ is the forcing to add $P \restrict \lambda_{0,\omega+2} *
C_\omega^+$,
\item $\bar{\X}$ is the forcing to add $P_0 \times P_1 \restrict [
\lambda_{n,\omega+2},\theta_{\lambda_n}^+)\times C_0$ and
\item $\hat{\X}$ is the forcing to add $\prod_{0<i<\omega}C_i$.
\end{enumerate}
This completes the argument that $\lambda_{n,\omega+2} \notin
I[\lambda_{n,\omega+2}]$ for $0<n<\omega$.

Suppose that $\tau = \lambda_{n,\omega+3}$ for $n<\omega$.  Any sequence
witnessing $\tau \in I[\tau]$ in $V[G*H][R]$ is in the extension by
$V[G\upharpoonright \lambda_{n+1}][\prod_{k \leq n+1} Q_k]$.  Recall that by
passing to an outer model, we can decompose $Q_{n+1}$ as
$P_{n+1}^0 \times C_{n+1}^0 \times P_{n+1}^1 \times C_{n+1}^1$.  As before we
let $P_0 \times P_1$ be generic for $\P_0(\lambda_n) \times (\P_1(\lambda_n)
\times \Add(\lambda_{n,1},\theta_{\lambda_n}^+ \setminus
\theta_{\lambda_n}))$ and $\prod_{k \leq \omega+1} C_k^+$ be
generic for $\C^+(\lambda_n)$.

By Remark \ref{downwardsabsolute} it is enough to show that $\tau \notin
I[\tau]$ in the model
\[ V[G \restrict \lambda_n][\prod_{k \leq n} Q_k][P_{n+1}^1 \times C_{n+1}^1
\times C_{n+1}^0][P_{n+1}^0][P_0][P_1 \restrict
[\theta_{\lambda_n},\theta_{\lambda_n}^+)][\prod_{k\leq\omega}C_k][P_1 \restrict
\lambda_{n,\omega+3} * C_{\omega+1}^+] \]
We have that $\lambda_{n,\omega+3}$ is Mahlo in the model $ V[G \restrict
\lambda_n][\prod_{k \leq n} Q_k][P_{n+1}^1 \times C_{n+1}^1 \times C_{n+1}^0]$.
So we apply the scheme from Section \ref{schematic} and Lemma
\ref{formerapproximation} in this model where
\begin{enumerate}
\item $\lambda = \lambda_{n,\omega+2}$,
\item $\M$ is the forcing to add $P \restrict
\lambda_{n,\omega+3} *C_{\omega+1}^+$,
\item $\bar{\X}$ is the forcing to add $P_0 \times P_1
\restrict [\theta_{\lambda_n},\theta_{\lambda_n}^+) \times
\prod_{k\leq\omega}C_k$ and
\item $\hat{\X}$ is the forcing to add $P_{n+1}^0$.
\end{enumerate}
This completes the argument that $\lambda_{n,\omega+3} \notin
I[\lambda_{n,\omega+3}]$ for all $n<\omega$.

Next we assume that $\tau = \lambda_n$ for $n \geq 1$.  Any sequence witnessing
that $\tau \in I[\tau]$ in $V[G*H][R]$ is in $V[G \upharpoonright
\lambda_n+1][\prod_{i\leq n} Q_i]$.  As before by passing to an outermodel, we
can decompose $Q_{n}$ as $P_{n}^0 * S_{n}^0 \times P_{n}^1 \times C_{n}^1$.  By
Remark \ref{downwardsabsolute}, it is enough to show that there are no such
sequences in the outer model
\[ V[C_n^1][Y][Z][G \restrict \lambda_{n-1}
+1][\prod_{i \leq n-1}Q_i][P_{n}^1][P_{n}^0* S_{n}^0\]
where $Y$ is generic for $\mathcal{A}(\A \restrict \lambda_n, \A(\lambda_n))$
and $Z$ is generic for $\mathcal{A}(\A) \restrict [\lambda_{n-1}+1,\lambda_n)$.

By Fact \ref{term-iteration}, $\lambda_n$ is still Mahlo in $V[C_n^1][Y][Z]$.
Moreover, the computation of $\Q_n^0 =
\M(\lambda_{n-1,\omega+2},\lambda_{n-1,\omega+3}, \lambda_n)$ is the same in $V$
and $V[C_n^1][Y][Z]$, since the forcing to add $Z$ is closed beyond the first
inaccessible above $\lambda_{n-1,\omega+3}$.

So in this model we apply the scheme from Section \ref{schematic} and Lemma
\ref{formerapproximation} where
\begin{enumerate}
\item $\lambda = \lambda_{n-1,\omega+3}$.
\item $\M$ is the forcing to add $P_n^0*S_n^0$,
\item $\bar{\X}$ is the forcing to add $G \upharpoonright (\lambda_{n-1}+1) \times
\prod_{i\leq n-1}Q_i$
\item $\hat{\X}$ is the forcing to add $P_n^1$ and
\end{enumerate}

This finishes the proof that for all $n \geq 1$, $\lambda_n \notin I[\lambda_n]$
in $V[G*H][R]$.

Next we assume that $\tau = \lambda_{n,1}$ for some $n \geq 1$.  Any sequence
witnessing that $\tau \in I[\tau]$ is in the model $V[G \upharpoonright
\lambda_n+1][\prod_{i \leq n} Q_i]$.  By Remark \ref{downwardsabsolute} it is
enough to show that $\tau \notin I[\tau]$ in the model
\[
V[Y][G \upharpoonright \lambda_n][P_n^0][C_n^0][Y_{P_0}][P_n^1 *S_n^1]
\]
where $Y$ is generic for the poset of $\A\upharpoonright \lambda_n$-names for
elements of $\P_1(\lambda_n) \times \Add(\lambda_{n,1},\theta_{\lambda_n}^+
\setminus \theta_{\lambda_n}) \times \C(\lambda_n)$ and $Y_{P_0}$ is generic for
the poset of $\A\upharpoonright \lambda_n$-names for elements of
$\P_0(\lambda_n)$.  By Fact \ref{cohenterms}, we can take the forcing to add
$Y_{P_0}$ to be $\Add(\lambda_{n,0},\lambda_{n,2})$ as computed in $V$.

Note that $\lambda_{n,1}$ is still Mahlo in $V[Y]$.  So we apply the scheme from
Section \ref{schematic} and Lemma \ref{formerapproximation} in this model where

\begin{enumerate}
\item $\lambda = \lambda_n$,
\item $\M$ is the forcing to add $P_n^1 *S_n^1$,
\item $\bar{\X}$ is the forcing to add $G \upharpoonright \lambda_n \times P_n^0
\times C_n^0$ and
\item $\hat{\X}$ is the forcing to add $Y_{P_0}$.
\end{enumerate}

This finishes the proof that $\lambda_{n,1} \notin I[\lambda_{n,1}]$ for $n \geq
1$.

Next we assume that $\tau = \lambda_{n,2}$ for $n \geq 1$.  Any sequence
witnessing that $\tau \in I[\tau]$ in $V[G*H][R]$ is in $V[G*H][\prod_{k \leq n}
Q_k]$.  By Remark \ref{downwardsabsolute}, it is enough to show that $\tau
\notin I[\tau]$ in the model
\[
V[G\upharpoonright \lambda_n][\prod_{1 \leq i \leq \omega+1}C_i][\prod_{k
\leq n}Q_k][P_1][P_0*C_0^+].
\]

Note that $\lambda_{n,2}$ is still Mahlo in $V[G \upharpoonright
\lambda_n][\prod_{1 \leq i \leq \omega+1} C_i]$.  Hence we apply the scheme from
Section \ref{schematic} and Lemma \ref{formerapproximation} in this model where
\begin{enumerate}
\item $\lambda= \lambda_{n,1}$,
\item $\M$ is the forcing to add $P_0*C_0^+$,
\item $\bar{\X}$ is the forcing to add $\prod_{k \leq n}Q_k$ and
\item $\hat{\X}$ is the forcing to add $P_1$.
\end{enumerate}

Next we assume that $\tau = \lambda_{n,i+1}$ for $0 < n <\omega$ and $2 \leq i
<\omega$.  Any sequence witnessing that $\tau \in I[\tau]$ in $V[G*H][R]$ is in
$V[G \upharpoonright \lambda_{n} +1][\prod_{k\leq n} Q_k]$.  By Remark
\ref{downwardsabsolute}, it is enough to show that $\tau \notin I[\tau]$ in the
model
\[
V[G \upharpoonright \lambda_n][\prod_{k \geq i} C_k][\prod_{k
\leq n}Q_i][\prod_{k<i-1}C_k][P_0][P_1 \upharpoonright [
\lambda_{n,i+1},\theta_{\lambda_n}^+)][P_1\upharpoonright \lambda_{n,i+1}*C_{i-1}^+].
\]

Note that in $V[G \upharpoonright \lambda_n][\prod_{k \geq i} C_i]$,
$\lambda_{n,i+1}$ is still Mahlo.  Hence we apply the scheme from Section
\ref{schematic} and Lemma \ref{formerapproximation} in this model where
\begin{enumerate}
\item $\lambda = \lambda_{n,i}$,
\item $\M$ is the forcing to add $P \restrict \lambda_{n,i+1} *C_{i-1}^+$,
\item $\bar{\X}$ is the forcing to add $\prod_{k\leq n}Q_k \times \prod_{k<
i-1}C_k \times P_0 \times P_1 \restrict [\lambda_{n,i+1},\theta_{\lambda_n}^+)$
and
\item $\hat{\X}$ is the trivial forcing.
\end{enumerate}

This finishes the proof that for $0<n<\omega$ and $2 \leq i < \omega$,
$\lambda_{n,i+1} \notin I[\lambda_{n,i+1}]$ and with it the proof of Lemma
\ref{finalargument} \end{proof}

\bibliographystyle{amsplain}
\bibliography{refs}

\end{document}